\documentclass[11pt]{article}
\usepackage[tbtags]{amsmath}
\usepackage{amssymb}
\usepackage{amsthm}
\usepackage[misc]{ifsym}
\usepackage{cases}
\usepackage{xcolor}
\usepackage{mathtools}
\usepackage{mathrsfs}
\usepackage{graphicx}
\usepackage{float}
\usepackage[colorlinks=true, linkcolor=black, citecolor=black, urlcolor=black]{hyperref}
\usepackage{subcaption} 

\numberwithin{equation}{section}
\setlength{\textwidth}{160mm} \setlength{\textheight}{218mm}
\oddsidemargin=2 mm \topskip 0.5cm \topmargin=-0.5in
\normalsize

\title{\bf Linear-Quadratic Partially Observed Mean Field Stackelberg Stochastic Differential Game with Applications\thanks{This work is supported by National Key R\&D Program of China (2022YFA1006104), National Natural Science Foundation of China (12471419, 12401577, 12271304), Shandong Provincial Natural Science Foundation (ZR2024ZD35), Postdoctoral Fellowship Program (Grade B) of China Postdoctoral Science Foundation (GZB20230168), China Postdoctoral Science Foundation (2024M750488, 2025T180851), and Shanghai Post-doctoral Excellence Program (2023229).}}
\author{\normalsize Yu Si\thanks{\it School of Mathematics, Shandong University, Jinan 250100, P.R. China, E-mail: 202112003@mail.sdu.edu.cn},\quad Yueyang Zheng\thanks{\it School of Mathematical Sciences, Fudan University, Shanghai, 200433, P.R. China, E-mail: zhengyueyang0106@163.com},\quad Jingtao Shi\thanks{\it Corresponding author. School of Mathematics, Shandong University, Jinan 250100, P.R. China, E-mail: shijingtao@sdu.edu.cn}}

\date{}

\newtheorem{myprob}{Problem}[section]
\newtheorem{mypro}{Proposition}[section]
\newtheorem{mythm}{Theorem}[section]
\newtheorem{mydef}{Definition}[section]
\newtheorem{mylem}{Lemma}[section]
\newtheorem{myassump}{Assumption}[section]
\newtheorem{myremark}{Remark}[section]

\begin{document}
	
	\maketitle
	
	\noindent{\bf Abstract:}\quad This paper is concerned with a linear-quadratic partially observed mean field Stackelberg stochastic differential game, which contains a leader and a large number of followers. Specifically, the followers confront a large-population Nash game subsequent to the leader's initial announcement of his strategy. In turn, the leader optimizes his own cost functional, taking into account the anticipated reactions of the followers. The state equations of both the leader and the followers are general stochastic differential equations, where the drift terms contain both the state average term and the state expectation term. However, the followers' state average terms enter into the drift term of the leader's state equation and the state expectation term of the leader enters into the state equation of the follower, reflecting the mutual influence between the leader and the followers. By utilizing the techniques of state decomposition and backward separation principle, we deduce the open-loop adapted decentralized strategies and feedback decentralized strategies of this leader-followers system, and demonstrate that the decentralized strategies are the corresponding $\varepsilon$-Stackelberg-Nash equilibrium. Finally, we apply the theoretical result to a product planning problem with sticky prices. 
	
	\vspace{2mm}
	
	\noindent{\bf Keywords:}\quad Stackelberg stochastic differential game, linear-quadratic control, mean field games, partial observation, common noise, state-estimate feedback Stackelberg-Nash equilibrium, optimal filtering, system of coupled Riccati equations
	
	\vspace{2mm}
	
	\noindent{\bf Mathematics Subject Classification:}\quad 93E20, 60H10, 49K45, 49N70, 91A23
	
	\section{Introduction}
	
	In recent years, the study of dynamic optimization for large-population stochastic population systems has gained significant attention across a wide range of disciplines, including finance, engineering, and biological modeling. Large population systems are characterized by a large number of agents, where the influence of any single individual is negligible, yet their collective behavior exerts a substantial impact on all agents. A key feature of this system is the weak coupling through state averages or empirical distributions in the dynamics or the cost functionals. An efficient approach is to discuss the associated {\it mean field game} (MFG) to determine an approximate equilibrium by analyzing the related limiting behavior. The existing literature mainly offers two approaches to addressing large-population optimization problems. The first approach, introduced by Huang et al. \cite{Huang-Caines-Malhame-2007,Huang-Malhame-Caines-2006}, involves replacing the coupling terms with limiting processes to solve the auxiliary control problem for individual agents. This method formalizes a fixed-point problem by determining the limiting process, known as the fixed-point (or top-down) approach, also referred to as Nash certainty equivalence. The second approach, proposed by Lasry and Lions \cite{Lasry-Lions-2007}, treats the large-population problem as a high-dimensional control problem. It first derives the optimal solution for the high-dimensional control problem and then takes the limit as $N\rightarrow\infty$ to obtain the optimal solution, termed the direct (or bottom-up) approach. The decentralized strategies derived from these two methods for large-population problems can be applied to models with a large but finite number of agents to achieve an $\epsilon$-Nash equilibrium. This means that even if each agent has access to centralized information about all agents, their cost functions would not improve significantly. Compared to the Nash strategies determined under centralized information, these decentralized strategies require only the private information of each agent, offering lower computational and implementation complexity. The interested readers can refer to \cite{Bardi-Priuli-2014, Du-Huang-Wu-2018, Hu-Huang-Li-2018, Hu-Huang-Nie-2018, Huang-Zhou-2020, Li-Li-Wu-2022, Moon-Basar-2017, Wang-Zhang-2017} for {\it linear-quadratic} (LQ) MFGs, and refer to \cite{Bensoussan-Frehse-Yam-2013, Buckdahn-Djehiche-Li-Peng-2009, Carmona-Delarue-2013, Cong-Shi-2024, Huang-2010, Nguyen-Huang-2012} for further analysis of MFGs and related topics.
	
	We note that the aforementioned literature primarily focuses on scenarios where all agents have access to complete information. However, in real-world applications, agents often operate under partial information constraints.   Baghery and \O ksendal \cite{Baghery-Oksendal-2007} gave the sufficient and necessary  maximum principles for the type of partial information control. Huang and Wang \cite{Huang-Wang-2016} explored dynamic optimization problems for large-population systems with partial information. Huang et al. \cite{Huang-Wang-Wu-2016} further investigated backward large-population systems under both full and partial information settings. Wang et al. \cite{Wang-Wu-Xiong-2018} conducted a comprehensive study on optimal control problems for {\it forward-backward stochastic differential equations} (FBSDEs) with incomplete information. Additionally, \c Sen and Caines \cite{Sen-Caines-2016, Sen-Caines-2019} studied MFG problems under partial observation. Bensoussan et al. \cite{Bensoussan-Feng-Huang-2021} analyzed a class of {\it linear-quadratic-Gaussian} (LQG) MFGs with partial observation and common noise. Huang et al. \cite{Huang-Wang-Wang-Xiao-2024, Huang-Wang-Wang-Wang-2023} studied forward-backward stochastic MFGs with partial observation and common noise, as well as backward stochastic MFGs with partial information and common noise, respectively. Li et al. \cite{Li-Nie-Wang-Yan-2024,Li-Nie-Wu-2023} also investigated LQ large-population problems under partial information and the scenario with common noise. Li et al. \cite{Li-Li-Wu-2025} studied a class of linear-quadratic MFG problems with partial observation, where the state equations of all agents are coupled through a control averaging term. By utilizing the fixed-point method, state decomposition and backward separation techniques, they derived the state feedback form of decentralized optimal strategies. Chen et al. \cite{Chen-Du-Wu-2024} investigated a class of linear-quadratic MFG problems with partial observation and convex cost functionals. In their work, the state equations include both state average terms and state expectation terms. They also established the well-posedness of solutions to a conditional mean-field forward-backward stochastic differential equation with filtering. For a comprehensive study on systems with partial observation, we may refer to \cite{Bensoussan-1992,Liptser-Shiryayev-1977} .
	
	The Stackelberg differential game, also known as the leader-follower differential game, arises in situations where certain agents hold dominant positions in the market. von Stackelberg \cite{Stackelberg-1952} introduced the concept of the hierarchical solution. Yong \cite{Yong-2002} delved into a generalized framework of LQ stochastic leader-follower differential games. Shi et al. \cite{Shi-Wang-Xiong-2016} studied a stochastic leader-follower differential game with asymmetric information. Nourian et al. \cite{Nourian-Caines-Malhame-Huang-2012} studied a large-population LQ leader-follower stochastic multi-agent systems and established their ($\varepsilon_1,\varepsilon_2$)-Stackelberg-Nash equilibrium. Wang and Zhang \cite{WZ14} studied discrete-time Stackelberg games for multi-agent systems involving a leader and many followers with infinite horizon tracking-type costs, a set of centralized and distributed strategies are designed, separately. Bensoussan et al. \cite{Benssousan-Chau-Yam-2015,Benssousan-Chau-Yam-2016,Benssousan-Chau-Yam-2017} systematically studied $N$-player games with a leader, proposing a convergence analysis between $N$-player games and mean-field games. Moon and Ba\c{s}ar \cite{Moon-Basar-2018} investigated continuous-time mean field LQ Stackelberg stochastic differential games by the fixed-point method. Lee et al. \cite{LNM19} considered a discrete-time leader-follower decentralized optimal control for a hexarotor group with one leader and large-population heterogeneous followers which are subject to partial information, where they obtained a set of decentralized optimal control for the leader and $N$ follower hexarotors by the mean filed Stackelberg game theory, and then constitutes an $\epsilon$-Stackelebrg equilibrium. Si and Wu \cite{Si-Wu-2021} explored a backward-forward LQ Stackelberg MFG, where the leader's state equation is backward, and the follower's state equation is forward. Feng and Wang \cite{Feng-Wang-2024} investigated an LQ social optimal problem with a leader and a amount of followers.  Wang \cite{Wang-2024} employed a direct method to solve LQ Stackelberg MFGs with a leader and a substantial number of followers. Cong et al. \cite{Cong-Shi-Wang-2024} employed direct method to investigate a class of Stackelberg MFGs and social optimality with an arbitrary number of populations. Very recently, Si and Shi \cite{Si-Shi-2025} researched an LQ mean field Stackelberg stochastic differential game with partial information and common noise.
	
	In this paper, we consider a new class of LQ partially observed mean field Stackelberg stochastic differential games, which contains a leader and a number of followers. The leader first announces his strategy and then each follower optimizes its own cost based on the leader's announcement. At last, the leader chooses his own optimal strategy based on the responses of the followers. Compared with the existing literatures, the contributions of this paper are listed as follows.
	\begin{itemize}
		\item We introduced a general LQ partially observed mean field Stackelberg stochastic differential game, which contains a leader and a large number of followers. The leader and followers can only access information provided by the observation processes. Due to the framework of partial observation, we need to use variational technique and optimal filter technique to obtain the open-loop decentralized optimal strategy and feedback decentralized optimal strategy of the auxiliary limiting problem.
		\item The state equation and cost functional of the leader both contain the state average term of followers. This indicates that the collective behavior of the followers has a significant impact on the leader. Additionally, the leader's state appears in the followers' state equations and cost functionals, meaning that the leader's actions can directly affect the followers' performance measures. Moreover, both the leader's and the followers' state equations contain their own state expectation terms, which enhances both the computational complexity and the generality of the framework.
		\item By utilizing the techniques of state decomposition and backward separation principle, we overcome the difficulty of circular dependency arising from the partial observation framework and obtian optimal feedback decentralized strategies of this leader-followers system, and demonstrate that the decentralized strategies are the corresponding $\varepsilon$-Stackelberg-Nash equilibrium. Furthermore we apply decoupling technique to obtain the solvability of Hamiltonian system.
        \item Theoretical results are applied to a product planning problem with sticky prices.
	\end{itemize}
	
	The rest of this paper is organized as follows. In Section 2, we formulate our problem. In Section 3, we introduce the auxiliary limiting problem and solve the problem of the followers and the leader in turn and derive the decentralized optimal strategies. In Section 4, we prove the decentralized optimal strategies are the $\varepsilon$-Stackelberg-Nash equilibria of the games. Moreover, we apply the theoretical result to a product planning problem with sticky prices in Section 5.   Finally, the conclusion is given in Section 6.
	
	\section{Problem formulation}
	
	Firstly, we introduce some notations that will be used throughout the paper. We consider a finite time interval $[0, T]$ for a fixed $T > 0$.
	Let $\big(\Omega, \mathcal{F},  \left\{\mathcal{F}_t\right\}_{t\geq0},\mathbb{P}\big)$ be a complete filtered probability space, on which
	$\left\{W_0(s),\overline{W}_0(s), W_i(s),\overline{W}_i(s),0 \leq s \leq t, 1 \leq i \leq N\right\}$ is a standard $(2nN+2n)$-dimensional Brownian motion. Then, for any $0 \leq t \leq T$, we define
	$$
	\mathcal{F}_t:=\sigma\left\{W_0(s),\overline{W}_0(s), W_i(s),\overline{W}_i(s),0 \leq s \leq t, 1 \leq i \leq N\right\}\vee \mathcal{N}_{\mathbb{P}},
	$$
	where $\mathcal{N}_{\mathbb{P}}$ is the class of all $\mathbb{P}$-null sets. Let
	$$\mathcal{F}_t^0:=\sigma\left\{W_0(s),\overline{W}_0(s), 0 \leq s \leq t\right\}\vee \mathcal{N}_{\mathbb{P}},$$
	$$\mathcal{F}_t^i:=\sigma\left\{W_i(s),\overline{W}_i(s), 0 \leq s \leq t\right\}\vee \mathcal{N}_{\mathbb{P}} ,\quad 1 \leq i \leq N,$$
	$$\widetilde{\mathcal{F}}_t^{i}:=\sigma\left\{W_i(s),\overline{W}_i(s),W_0(s),\overline{W}_0(s), 0 \leq s \leq t\right\}\vee \mathcal{N}_{\mathbb{P}} ,\quad 1 \leq i \leq N.$$

	Let $\mathbb{R}^n$ be an $n$-dimensional Euclidean space with norm and inner product being defined as $|\cdot|$ and $\langle\cdot, \cdot\rangle$, respectively.
	Next, we introduce four necessary spaces frequently used in this paper. $C([0,T],\mathbb{R}^n)$ is the space of all $\mathbb{R}^n$-valued continuous functions defined on $[0,T]$. $L^2(0,T;\mathbb{R}^n)$ is the space of $\mathbb{R}^n$-valued deterministic function defined on $[0,T]$ satisfying $\int_0^T |f(t)|^2 dt < \infty$. A bounded, measurable function $f(\cdot):[0, T] \rightarrow \mathbb{R}^n$ is denoted as $f(\cdot) \in L^{\infty}(0, T; \mathbb{R}^n)$.
	An $\mathbb{R}^n$-valued, $\mathbb{F}$-adapted stochastic process $f(\cdot): \Omega \times [0, T] \rightarrow \mathbb{R}^n$ satisfying $\mathbb{E} \int_0^T |f(t)|^2 dt < \infty$ is denoted as $f(\cdot) \in L_{\mathbb{F}}^2(0, T; \mathbb{R}^n)$. 
	
	For any random variable or stochastic process $X$ and filtration $\mathcal{H}$, $\mathbb{E}X$ and $\mathbb{E}[X|\mathcal{H}]$ represent the mathematical expectation and conditional mathematical expectation of $X$, respectively. For a given vector or matrix \(M\), let \(M^{\top}\) represent its transpose. We denote the set of symmetric \(n \times n\) matrices (resp. positive semi-definite matrices) with real elements by \(\mathcal{S}^n\) (resp. \(\mathcal{S}_{+}^n\)), and $\|\cdot\|_Q^2 \triangleq\langle Q \cdot, \cdot\rangle$ for any $Q \in \mathcal{S}^n$. If \(M \in \mathcal{S}^n\) is positive (semi) definite, we abbreviate it as \(M > (\geq) 0\). For a positive constant \(k\), if \(M \in \mathcal{S}^n\) and \(M > kI\), we label it as \(M \gg 0\).
	
	Now, let us focus on a large-population system comprised of $N+1$ individual agents, denoted as $\left\{\mathcal{A}_i\right\}_{0 \leq i \leq N}$. The state $x^{u_0,u}_0(\cdot)$ of the leader $\mathcal{A}_0$ is given by the following linear {\it stochastic differential equation} (SDE):
	\begin{equation}\label{leader state}
		\left\{\begin{aligned}
			d x^{u_0,u}_0(t)= &\ \left[A_0(t) x^{u_0,u}_0(t)+B_0(t) u_0(t)+\bar{A}_0(t) \mathbb{E}x^{u_0,u}_0(t)+C_0(t) {x^{u_0,u}}^{(N)}(t)+b_0(t)\right] d t  \\
			&+\sigma_0(t) d W_0(t)+\bar{\sigma}_0(t) d \overline{W}_0(t), \\
			x^{u_0,u}_0(0)= &\ \xi_0,
		\end{aligned}\right.
	\end{equation}
	and the state $x^{u_0,u}_i(\cdot)$ of the follower $\mathcal{A}_i$ is given by the following linear $\mathrm{SDE}$:
	\begin{equation}\label{follower state}
		\left\{\begin{aligned}
			d x^{u_0,u}_i(t)  =&\ \Big[A(t) x^{u_0,u}_i(t)+B(t) u_i(t)+\bar{A}(t) \mathbb{E}x^{u_0,u}_i(t)+C(t) {x^{u_0,u}}^{(N)}(t)\\
			& +F(t) \mathbb{E}x^{u_0,u}_0(t)+b(t)\Big] d t+\sigma(t) d W_i(t)+\bar{\sigma}(t) d \overline{W}_i(t), \\
			x^{u_0,u}_i(0)  =&\ \xi,
		\end{aligned}\right.
	\end{equation}
	where $\xi,\xi_0 \in \mathbb{R}^n$ represent the initial states, $u_0(\cdot)$ and $u_i(\cdot)$ denote control processes of the leader $\mathcal{A}_0$ and follower $\mathcal{A}_i$, respectively. Let $u(\cdot)\equiv(u_1(\cdot),u_2(\cdot),\cdots,u_N(\cdot))$ denotes the control strategy of all followers $\{\mathcal{A}_i\}_{1\leq i \leq N}$. Moreover,  $x^{u_0,u}_0(\cdot)$ and $x^{u_0,u}_i(\cdot)$ denote state  processes of the leader $\mathcal{A}_0$ and follower $\mathcal{A}_i$ corresponding to control strategy $(u_0(\cdot),u(\cdot))$, respectively.  ${x^{u_0,u}}^{(N)}(\cdot) := \frac{1}{N} \sum_{i=1}^N x^{u_0,u}_i(\cdot)$ signifies the average state of the followers corresponding to control strategy $(u_0(\cdot),u(\cdot))$. In addition, leader $\mathcal{A}_0$ can only have assess to a related observation process driven by the following SDE:
	\begin{equation}\label{leader observation}
		\left\{\begin{aligned}
			d Y^{u_0,u}_0(t)  =&\left[H_0(t) x^{u_0,u}_0(t)+\bar{H}_0(t) \mathbb{E}x^{u_0,u}_0(t)+I_0(t) {x^{u_0,u}}^{(N)}(t)+h_0(t)\right] d t +f_0(t) d \overline{W}_0(t) , \\
			Y^{u_0,u}_0(0)  =&\ 0.
		\end{aligned}\right.
	\end{equation}
	The observation process of follower $\mathcal{A}_i$ satisfies
	\begin{equation}\label{follower observation}
		\left\{\begin{aligned}
			d Y^{u_0,u}_i(t)  =&\left[H(t) x^{u_0,u}_i(t)+\bar{H}(t) \mathbb{E}x^{u_0,u}_i(t)+I(t) {x^{u_0,u}}^{(N)}(t)+h(t)\right] d t +f(t) d \overline{W}_i(t) , \\
			Y^{u_0,u}_i(0)  =&\ 0.
		\end{aligned}\right.
	\end{equation}
	The corresponding coefficients are deterministic functions with appropriate dimensions.
	
	Then, let $\mathcal{F}_t^{Y^{u_0,u}_i}$ is the observable information of the agent $\mathcal{A}_i$, $i=0,1,\ldots,N$, where $\mathcal{F}_t^{Y^{u_0,u}_i}=\sigma\left\{Y^{u_0,u}_i(s), 0 \leq s \leq t\right\}$. Next, we define the admissible control set of the follower $\mathcal{A}_i$ by
	$$
	\mathcal{U}^{ad}_i=\left\{u_i(\cdot) \mid u_i(\cdot) \in L_{\mathcal{F}_t^{Y^{u_0,u}_i}}^2\big(0, T ; \mathbb{R}^k\big)\right\}.
	$$
	The admissible control set of the leader $\mathcal{A}_0$ is defined by
	$$
	\mathcal{U}^{ad}_0=\left\{u_0(\cdot) \mid u_0(\cdot) \in L_{\mathcal{F}_t^{Y^{u_0,u}_0}}^2\big(0, T ; \mathbb{R}^k\big)\right\}.
	$$
	For the leader $\mathcal{A}_0$, the cost functional is defined by
	\begin{equation}\label{leader cost}
		\begin{aligned}
			\mathcal{J}_0&\left(u_0(\cdot),u(\cdot)\right)=\frac{1}{2} \mathbb{E} \bigg\{\int_0^T \bigg(\|x^{u_0,u}_0(t)-\Gamma_0  {x^{u_0,u}}^{(N)}(t)-\bar{\Gamma}_0  \mathbb{E}x^{u_0,u}_0(t)-\eta_0(t)\|_{Q_0}^2  \\
			&+\|u_0(t)\|^2_{R_0} +2\left\langle S_0(t)(x^{u_0,u}_0(t)-\Gamma_0(t){x^{u_0,u}}^{(N)}(t)-\bar{\Gamma}_0(t)\mathbb{E}x^{u_0,u}_0(t)),u_0(t)\right\rangle\bigg)  dt\\
			& + \|x^{u_0,u}_0(T)-\Gamma_1 {x^{u_0,u}}^{(N)}(T)-\bar{\Gamma}_1  \mathbb{E}x^{u_0,u}_0(T)-\eta_1\|_{G_0}^2 \bigg\}\,.
		\end{aligned}
	\end{equation}
	Let $u_{-i}(\cdot)\equiv\left(u_1(\cdot), \ldots, u_{i-1}(\cdot), u_{i+1}(\cdot), \ldots, u_N(\cdot)\right)$ be the set of control strategies of followers except for $i$-th follower $\mathcal{A}_i$. For $i=1, \cdots, N$, the cost functional of the $i$-th follower $\mathcal{A}_i$ is defined by
	\begin{equation}\label{follower cost}
		\begin{aligned}
			\mathcal{J}_i&\left(u_i(\cdot), u_{-i}(\cdot),u_0(\cdot)\right)=\frac{1}{2} \mathbb{E} \bigg\{\int_0^T \bigg(\|x^{u_0,u}_i(t)-\Gamma_2  {x^{u_0,u}}^{(N)}(t)-\bar{\Gamma}_2  \mathbb{E}x^{u_0,u}_i(t)-\Gamma_3 x^{u_0,u}_0(t)\\
			&-\bar{\Gamma}_3\mathbb{E}x^{u_0,u}_0(t) -\eta_2(t)\|_Q^2+\|u_i(t)\|^2_R +2\left\langle S(t)(x^{u_0,u}_i(t)-\bar{\Gamma}_2(t)\mathbb{E}x^{u_0,u}_i(t)),u_i(t)\right\rangle\bigg)   dt  \\
			&+\|x^{u_0,u}_i(T)-\Gamma_4  {x^{u_0,u}}^{(N)}(T)-\bar{\Gamma}_4  \mathbb{E}x^{u_0,u}_i(T)-\Gamma_5 x^{u_0,u}_0(T) -\bar{\Gamma}_5\mathbb{E}x^{u_0,u}_0(T)-\eta_4\|_G^2 \bigg\}.
		\end{aligned}
	\end{equation}
	
	Moreover, we introduce the following assumptions of coefficients.
	\begin{myassump}\label{A1}
		The coefficients satisfy the following conditions:
		
		(i) $A_0(\cdot),\bar{A}_0(\cdot),C_0(\cdot), \sigma_0(\cdot), \bar{\sigma}_0(\cdot)$, $A(\cdot),\bar{A}(\cdot), C(\cdot),F(\cdot), \sigma(\cdot), \bar{\sigma}(\cdot)$, $H_0(\cdot),\bar{H}_0(\cdot),I_0(\cdot), f_0(\cdot)$,\\ $H(\cdot), \bar{H}(\cdot), I(\cdot), f(\cdot) \in L^{\infty}\left(0, T ; \mathbb{R}^{n \times n}\right)$, $b_0(\cdot),h_0(\cdot),b(\cdot),h(\cdot)\in L^{\infty}\left(0, T ; \mathbb{R}^{n}\right)$, and $B(\cdot), B_0(\cdot) \in\\ L^\infty\left(0, T; \mathbb{R}^{n \times k}\right) $;
		
		(ii) $Q(\cdot),Q_0(\cdot) \in L^{\infty}\left(0, T ; \mathcal{S}^n\right)$, $R(\cdot),R_0(\cdot) \in L^{\infty}\left(0, T ; \mathcal{S}^k\right)$, $S(\cdot), S_0(\cdot) \in L^\infty \left(0, T; \mathbb{R}^{k \times n}\right) $ $\Gamma_0(\cdot),\bar{\Gamma}_0(\cdot),\Gamma_2(\cdot),\bar{\Gamma}_2(\cdot),\Gamma_3(\cdot),\bar{\Gamma}_3(\cdot) \in L^{\infty}\left(0, T ; \mathbb{R}^{n\times n}\right)$, $\eta_0(\cdot),\eta_2(\cdot) \in L^\infty\left(0, T ; \mathbb{R}^n\right)$, $G,G_0 \in \mathcal{S}^n$,  $\Gamma_1,\bar{\Gamma}_1,\Gamma_4,\bar{\Gamma}_4,\Gamma_5,\bar{\Gamma}_5 \in \mathbb{R}^{n\times n}$, $\eta_1,\eta_4 \in \mathbb{R}^n$ and $R(\cdot),R_0(\cdot)>0$, $Q(\cdot),Q_0(\cdot) \geq 0$, $G, G_0 \geq 0$, $Q(\cdot)-S^\top(\cdot) R^{-1}(\cdot)S(\cdot), Q_0(\cdot)-S_0^\top(\cdot) R_0^{-1}(\cdot)S_0(\cdot) \geq 0$.
	\end{myassump}
	
	We mention that under Assumption \ref{A1}, the system of states (\ref{leader state})-(\ref{follower state}) and the system of observations (\ref{leader observation})-(\ref{follower observation}) admits a unique solution by Yong \cite{Yong-2013}. And then, the cost functionals (\ref{leader cost}), (\ref{follower cost}) are well-defined.
	
	Our {\it LQ mean field Stackelberg stochastic differential game with partial observation}, investigated in this paper, can be stated as follows.
	
	\begin{myprob}\label{problem centralized}
		Finding a set of strategies $\left(u_0^*(\cdot), u_1^*(\cdot), \cdots, u_N^*(\cdot)\right)$ satisfying the following conditions:
		
		(i) given $u_0(\cdot) \in \mathcal{U}_{0}^{ad}$, for $1 \leq i \leq N$, finding the control strategy set $u^*(\cdot;u_0)\equiv(u^*_1(\cdot;u_0), \cdots$,\\ $u^*_N(\cdot;u_0))$ of the followers $\mathcal{A}_1, \cdots, \mathcal{A}_N$, such that
		\begin{equation*}
			\mathcal{J}_i\left(u^*_i(\cdot;u_0), u^*_{-i}(\cdot;u_0),u_0(\cdot)\right)=\inf _{u_i(\cdot) \in\, \mathcal{U}_i^{ad}} \mathcal{J}_i\left(u_i(\cdot), u^*_{-i}(\cdot;u_0),u_0(\cdot)\right);
		\end{equation*}
		
		(ii) finding the control strategy  $u^*_0(\cdot)$ of the leader $\mathcal{A}_0$ such that
		\begin{equation*}
			\mathcal{J}_0\left(u^*_0(\cdot ),u^*(\cdot;u_0^*)\right)=\inf _{u_0(\cdot) \in\, \mathcal{U}_{0}^{ad}} \mathcal{J}_0\left(u_0(\cdot ),u^*(\cdot;u_0)\right).
		\end{equation*}		
	\end{myprob}
	
	We call $\left(u_0^*(\cdot), u_1^*(\cdot), \cdots, u_N^*(\cdot)\right)$ the {\it Stackelberg-Nash equilibrium} of \textbf{Problem \ref{problem centralized}}. Moreover, the corresponding state $\left(x_0^*(\cdot), x_1^*(\cdot), \cdots, x_N^*(\cdot)\right)$ is called the {\it optimal trajectory}.
	
	\section{Limiting Stackelberg-Nash equilibria}
	
	Since the intricate nature arising from the coupling of state-average ${x^{u_0,u}}^{(N)}(\cdot):= \frac{1}{N} \sum_{i=1}^N x^{u_0,u}_i(\cdot)$ and the curse of dimensionality caused by a large number of agents, \textbf{Problem \ref{problem centralized}} becomes challenging to investigate. We shall employ the MFG theory to seek an approximate Stackelberg-Nash equilibrium, which serves as a bridge between the \textbf{Problem \ref{problem centralized}} and the limiting problem  when  the number of agents $N$ approaches infinity. Typically, the state-average is replaced by its frozen limit term for computational convenience.
	
	As $N\rightarrow+\infty$, let us assume that the state-average ${x^{u_0,u}}^{(N)}(\cdot)$ can be approximated by deterministic function $z(\cdot)$ that will be subsequently determined in the later analysis. For the leader $\mathcal{A}_0$, we introduce the following auxiliary state $\bar{x}^{u_0}_0(\cdot)\in L^2_{\mathcal{F}_t^0}(0,T;\mathbb{R}^{n})$ denoting state  processes of the leader $\mathcal{A}_0$ corresponding to control process $u_0(\cdot)$, which satisfies the following linear SDE:
	\begin{equation}\label{leader limiting state}
		\left\{\begin{aligned}
			d \bar{x}^{u_0}_0(t)= &\ \left[A_0(t) \bar{x}^{u_0}_0(t)+B_0(t) u_0(t)+\bar{A}_0(t) \mathbb{E}\bar{x}^{u_0}_0(t)+C_0(t) z(t)+b_0(t)\right] d t  \\
			&+\sigma_0(t) d W_0(t)+\bar{\sigma}_0(t) d \overline{W}_0(t)  , \\
			\bar{x}^{u_0}_0(0)= &\ \xi_0,
		\end{aligned}\right.
	\end{equation}
	and its corresponding observation process satisfies
	\begin{equation}\label{leader limiting observation}
		\left\{\begin{aligned}
			d \bar{Y}^{u_0}_0(t)  =&\left[H_0(t) \bar{x}^{u_0}_0(t)+\bar{H}_0(t) \mathbb{E}\bar{x}^{u_0}_0(t)+I_0(t) z(t)+h_0(t)\right] d t +f_0(t) d \overline{W}_0(t) , \\
			\bar{Y}^{u_0}_0(0)  =&\ 0.
		\end{aligned}\right.
	\end{equation}
	The limiting cost functional becomes
	\begin{equation}\label{leader limiting cost}
		\begin{aligned}
			J_0&\left(u_0(\cdot)\right)=\ \frac{1}{2} \mathbb{E} \bigg\{\int_0^T \bigg(\|\bar{x}^{u_0}_0(t)-\Gamma_0  z(t)-\bar{\Gamma}_0  \mathbb{E}\bar{x}^{u_0}_0(t)-\eta_0(t)\|_{Q_0}^2 \\
			&+\|u_0(t)\|^2_{R_0} +2\left\langle S_0(t)(\bar{x}^{u_0}_0(t)-\Gamma_0(t)z(t)-\bar{\Gamma}_0(t)\mathbb{E}\bar{x}^{u_0}_0(t)),u_0(t)\right\rangle\bigg) dt\\
			& + \|\bar{x}^{u_0}_0(T)-\Gamma_1 z(T)-\bar{\Gamma}_1  \mathbb{E}\bar{x}^{u_0}_0(T)-\eta_1\|_{G_0}^2 \bigg\}\,.
		\end{aligned}
	\end{equation}
	
	For each follower $\mathcal{A}_i$, $i=1,2,\cdots,N$, we introduce the following auxiliary state $\bar{x}^{u_0,u_i}_i(\cdot)\in L^2_{\mathcal{F}_t^i}(0,T;\mathbb{R}^{n})$ denoting state  processes of the follower $\mathcal{A}_i$ corresponding to control strategy $(u_0(\cdot),u_i(\cdot))$, which satisfies the following linear SDE:
	\begin{equation}\label{follower limiting state}
		\left\{\begin{aligned}
			d \bar{x}^{u_0,u_i}_i(t)  =&\Big[A(t) \bar{x}^{u_0,u_i}_i(t)+B(t) u_i(t)+\bar{A}(t) \mathbb{E}\bar{x}^{u_0,u_i}_i(t)+C(t) z(t)\\
			& +F(t) \mathbb{E}\bar{x}^{u_0}_0(t)+b(t)\Big] d t+\sigma(t) d W_i(t)+\bar{\sigma}(t) d \overline{W}_i(t)   , \\
			\bar{x}^{u_0,u_i}_i(0)  =&\ \xi,
		\end{aligned}\right.
	\end{equation}
	and its corresponding observation process satisfies
	\begin{equation}\label{follower limiting observation}
		\left\{\begin{aligned}
			d \bar{Y}^{u_0,u_i}_i(t)  =&\left[H(t) \bar{x}^{u_0,u_i}_i(t)+\bar{H}(t) \mathbb{E}\bar{x}^{u_0,u_i}_i(t)+I(t) z(t)+h(t)\right] d t +f(t) d \overline{W}_i(t) , \\
			\bar{Y}^{u_0,u_i}_i(0)  =&\ 0.
		\end{aligned}\right.
	\end{equation}
	The limiting cost functional becomes
	\begin{equation}\label{follower limiting cost}
		\begin{aligned}
			J_i&\left(u_i(\cdot),u_0(\cdot)\right)=\frac{1}{2} \mathbb{E} \bigg\{\int_0^T \bigg(\|\bar{x}^{u_0,u_i}_i(t)-\Gamma_2  z(t)-\bar{\Gamma}_2  \mathbb{E}\bar{x}^{u_0,u_i}_i(t)-\Gamma_3 \bar{x}_0^{u_0}(t)-\bar{\Gamma}_3\mathbb{E}\bar{x}_0^{u_0}(t)\\
			&-\eta_2(t)\|_Q^2+\|u_i(t)\|^2_R +2\left\langle S(t)(x_i^{u_0,u_i}(t)-\bar{\Gamma}_2(t)\mathbb{E}x_i^{u_0,u_i}(t)),u_i(t)\right\rangle\bigg) dt\\
			&+\|\bar{x}^{u_0,u_i}_i(T)-\Gamma_4  z(T)-\bar{\Gamma}_4  \mathbb{E}\bar{x}^{u_0,u_i}_i(T)-\Gamma_5 \bar{x}_0^{u_0}(T) -\bar{\Gamma}_5\mathbb{E}\bar{x}_0^{u_0}(T)-\eta_4\|_G^2 \bigg\}\,.
		\end{aligned}
	\end{equation}

	For partially observed stochastic control problems, it is natural that the control $u_i(\cdot)$ is related to the observation $\bar{Y}_i(\cdot)$. But the observation $\bar{Y}_i(\cdot)$ depends on the control $u_i(\cdot)$ through the state $\bar{x}^{u_0,u_i}_i(\cdot)$. In other words, there exists a circular dependency between control $u_i(\cdot)$ and observation $\bar{Y}_i(\cdot)$, which poses fundamental difficulties. To overcome this difficulty, we adopt state decomposition technique (see Wang et al. \cite{Wang-Wu-Xiong-2015}) to break the circular dependency.
	
	We introduce the following decomposition for the state-observation system of the leader:
	\begin{equation}\label{leader limiting state decomposition 1}
		\left\{\begin{aligned}
			d \bar{x}_{01}(t)= &\left[A_0(t) \bar{x}_{01}(t)+\bar{A}_0(t) \mathbb{E}\bar{x}_{01}(t)+C_0(t) z_{1}(t)\right] d t  +\sigma_0(t) d W_0(t)+\bar{\sigma}_0(t) d \overline{W}_0(t)   , \\
			\bar{x}_{01}(0)= &\ \xi_0,
		\end{aligned}\right.
	\end{equation}
	\begin{equation}\label{leader limiting observation decomposition 1}
		\left\{\begin{aligned}
			d \bar{Y}_{01}(t)  =&\left[H_0(t) \bar{x}_{01}(t)+\bar{H}_0(t) \mathbb{E}\bar{x}_{01}(t)+I_0(t) z_{1}(t)\right] d t +f_0(t) d \overline{W}_0(t) , \\
			\bar{Y}_{01}(0)  =&\ 0,
		\end{aligned}\right.
	\end{equation}
	and
	\begin{equation}\label{leader limiting state decomposition 2}
		\left\{\begin{aligned}
			d \bar{x}_{02}^{u_0}(t)= &\left[A_0(t) \bar{x}_{02}^{u_0}(t)+B_0(t) u_0(t)+\bar{A}_0(t) \mathbb{E}\bar{x}_{02}^{u_0}(t)+C_0(t) z_{2}(t)+b_0(t)\right] d t   , \\
			\bar{x}_{02}^{u_0}(0)= &\ 0,
		\end{aligned}\right.
	\end{equation}
	\begin{equation}\label{leader limiting observation decomposition 2}
		\left\{\begin{aligned}
			d \bar{Y}_{02}^{u_0}(t)  =&\left[H_0(t) \bar{x}_{02}^{u_0}(t)+\bar{H}_0(t) \mathbb{E}\bar{x}_{02}^{u_0}(t)+I_0(t) z_{2}(t)+h_0(t)\right] d t , \\
			\bar{Y}_{02}^{u_0}(0)  =&0.
		\end{aligned}\right.
	\end{equation}
	For the state-observation system of the followers, we give the following decomposition:
	\begin{equation}\label{follower limiting state decomposition 1}
		\left\{\begin{aligned}
			d \bar{x}_{i1}(t)  =&\left[A(t) \bar{x}_{i1}(t)+\bar{A}(t) \mathbb{E}\bar{x}_{i1}(t)+C(t) z_{1}(t)+F(t) \mathbb{E}\bar{x}_{01}(t)\right] d t\\
			& +\sigma(t) d W_i(t)+\bar{\sigma}(t) d \overline{W}_i(t)  , \\
			\bar{x}_{i1}(0)  =&\ \xi,
		\end{aligned}\right.
	\end{equation}
	\begin{equation}\label{follower limiting observation decomposition 1}
		\left\{\begin{aligned}
			d \bar{Y}_{i1}(t)  =&\left[H(t) \bar{x}_{i1}(t)+\bar{H}(t)\mathbb{E} \bar{x}_{i1}(t)+I(t) z_{1}(t)\right] d t +f(t) d \overline{W}_i(t) , \\
			\bar{Y}_{i1}(0)  =&\ 0,
		\end{aligned}\right.
	\end{equation}
	and
	\begin{equation}\label{follower limiting state decomposition 2}
		\left\{\begin{aligned}
			d \bar{x}_{i2}^{u_0,u_i}(t)  =&\left[A(t) \bar{x}_{i2}^{u_0,u_i}(t)+\bar{A}(t) \mathbb{E}\bar{x}_{i2}^{u_0,u_i}(t)+B(t)u_i(t)+C(t) z_{2}(t)\right.\\
                                          &\quad +F(t) \mathbb{E}\bar{x}_{02}^{u_0,u_i}(t)+b(t)\big] d t , \\
			\bar{x}_{i2}^{u_0,u_i}(0)  =&\ 0,
		\end{aligned}\right.
	\end{equation}
	
	\begin{equation}\label{follower limiting observation decomposition 2}
		\left\{\begin{aligned}
			d \bar{Y}_{i2}^{u_0,u_i}(t)  =&\left[H(t) \bar{x}_{i2}^{u_0,u_i}(t)+\bar{H}(t)\mathbb{E} \bar{x}_{i2}^{u_0,u_i}(t)+I(t) z_{2}(t)+h(t)\right] d t  , \\
			\bar{Y}_{i2}^{u_0,u_i}(0)  =&\ 0.
		\end{aligned}\right.
	\end{equation}
	Then we can verify that $\bar{x}_0^{u_0}(\cdot)=\bar{x}_{01}(\cdot)+\bar{x}_{02}^{u_0}(\cdot)$, $\bar{Y}_0^{u_0}(\cdot)=\bar{Y}_{01}(\cdot)+\bar{Y}_{02}^{u_0}(\cdot)$, $\bar{x}^{u_0,u_i}_i(\cdot)=\bar{x}_{i1}(\cdot)+\bar{x}_{i2}^{u_0,u_i}(\cdot)$, $\bar{Y}_i^{u_0,u_i}(\cdot)=\bar{Y}_{i1}(\cdot)+\bar{Y}_{i2}^{u_0,u_i}(\cdot)$, $z(\cdot)=z_1(\cdot)+z_2(\cdot)$, where $z_1(\cdot)$ and $z_2(\cdot)$ are deterministic function which are respectively approximated by $\bar{x}^{(N)}_{1}(\cdot) \coloneqq \frac{1}{N} \sum_{i=1}^{N}\bar{x}_{i1}(\cdot)$ and $\bar{x}^{(N)^{u_0,u}}_{2}(\cdot) \coloneqq \frac{1}{N} \sum_{i=1}^{N}\bar{x}_{i2}^{u_0,u_i}(\cdot)$.
	
	Let
	\begin{equation}
	\begin{aligned}
	&\mathcal{F}_t^{\bar{Y}_{01}}=\sigma\left\{\bar{Y}_{01}(s), 0 \leq s \leq t\right\},\quad \mathcal{F}_t^{\bar{Y}_{i1}}=\sigma\left\{\bar{Y}_{i1}(s), 0 \leq s \leq t\right\},\\
	&\mathcal{F}_t^{\bar{Y}_{0}^{u_0}}=\sigma\left\{\bar{Y}_{0}^{u_0}(s), 0 \leq s \leq t\right\},\quad \mathcal{F}_t^{\bar{Y}_{i}^{u_0,u_i}}=\sigma\left\{\bar{Y}_{i}^{u_0,u_i}(s), 0 \leq s \leq t\right\}.
	\end{aligned}
	\end{equation}
	Then we define the corresponding decentralized control set of leader by
	\begin{equation}\label{leader admissible control}
		\mathcal{U}^d_0=\left\{u_0(t) \mid u_0(t) \in \mathcal{U}_0 \text{ is $\mathcal{F}_t^{\bar{Y}_{0}^{u_0}}$-adapted} \right\},
	\end{equation}
	where $\mathcal{U}_0=\left\{u_0(t) \mid u_0(t) \text{ is an $\mathcal{F}_t^{\bar{Y}_{01}}$-adapted process and $\mathbb{E}\left[\sup_{0 \leq t \leq T}|u_0(t)|^2\right]<\infty$ }\right\}$. Then the corresponding decentralized control set of followers is defined by
	\begin{equation}\label{follower admissible control}
		\mathcal{U}^d_i=\left\{u_i(t) \mid u_i(t) \in \mathcal{U}_i \text{ is $\mathcal{F}_t^{\bar{Y}_{i}^{u_0,u_i}}$-adapted} \right\},\ i=1,\dots,N
	\end{equation}
	where $\mathcal{U}_i=\left\{u_i(t) \mid u_i(t) \text{ is an $\mathcal{F}_t^{\bar{Y}_{i1}}$-adapted process and $\mathbb{E}\left[\sup_{0 \leq t \leq T}|u_i(t)|^2\right]<\infty$ }\right\}. $
	
	Subsequently, we propose the corresponding limiting large-population  Stackelberg stochastic differential game with partial observation.
	\begin{myprob}\label{problem decentralized}
		Finding a set of strategies $\left(u_0^*(\cdot), u_1^*(\cdot), \cdots, u_N^*(\cdot)\right)$ satisfying the following conditions:
		
		(i)  Given $u_0(\cdot) \in \mathcal{U}_{0}^{d}$, for i-th follower, finding the control strategy set $u_i^*(\cdot;u_0)\in\, \mathcal{U}_i^{d}$ such that
		\begin{equation*}
			J_i\left(u^*_i(\cdot;u_0),u_0(\cdot)\right)=\inf _{u_i(\cdot) \in\, \mathcal{U}_i^{d}} J_i\left(u_i(\cdot),u_0(\cdot)\right);
		\end{equation*}
		
		(ii) finding the control strategy  $u^*_0(\cdot ) \in\, \mathcal{U}_0^{d}$ of the leader $\mathcal{A}_0$ such that
		\begin{equation*}
			J_0\left(u^*_0(\cdot )\right)=\inf _{u_0(\cdot) \in\, \mathcal{U}_0^{d}} J_0\left(u_0(\cdot )\right).
		\end{equation*}
	\end{myprob}
	
	The above $\left(u_0^*(\cdot), u_1^*(\cdot), \cdots, u_N^*(\cdot)\right)$ is called the {\it decentralized Stackelberg equilibrium} of \textbf{Problem \ref{problem decentralized}}.
	
	Based on the definition of the admissible control set, we obtain the following equivalence result of filtrations.
	\begin{mylem}\label{follower lemma1}
		For any $u_i(\cdot) \in\, \mathcal{U}_i^{d}$, $i=1,\ldots,N$, $\mathcal{F}_t^{\bar{Y}_{i}^{u_0,u_i}}=\mathcal{F}_t^{\bar{Y}_{i1}}.$
	\end{mylem}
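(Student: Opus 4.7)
\medskip

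\noindent\textbf{Proof proposal.} The plan is to establish the two inclusions separately, using the additivity $\bar Y_i^{u_0,u_i}(\cdot)=\bar Y_{i1}(\cdot)+\bar Y_{i2}^{u_0,u_i}(\cdot)$ together with the crucial observation that, in the decomposed system \eqref{follower limiting state decomposition 2}--\eqref{follower limiting observation decomposition 2}, both $\bar x_{i2}^{u_0,u_i}$ and $\bar Y_{i2}^{u_0,u_i}$ evolve by ordinary (non-stochastic) differential equations whose only source of randomness is the control $u_i$ (with $z_2(\cdot)$, $\mathbb E\bar x_{02}^{u_0}(\cdot)$, $h(\cdot)$, $b(\cdot)$ being deterministic inputs and $\mathbb E\bar x_{i2}^{u_0,u_i}(\cdot)$ being deterministic by taking expectations of \eqref{follower limiting state decomposition 2}). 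Consequently $\bar x_{i2}^{u_0,u_i}(t)$, and hence $\bar Y_{i2}^{u_0,u_i}(t)$, can be written as an explicit (deterministic) Volterra-type functional of the path $\{u_i(s):0\le s\le t\}$ obtained by variation of constants.

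For the inclusion $\mathcal{F}_t^{\bar Y_{i1}}\subseteq\mathcal{F}_t^{\bar Y_i^{u_0,u_i}}$, I would take $u_i(\cdot)\in\mathcal U_i^d$, so by definition $u_i$ is $\mathcal{F}_t^{\bar Y_i^{u_0,u_i}}$-adapted. Applying the variation-of-constants formula to \eqref{follower limiting state decomposition 2} and then to \eqref{follower limiting observation decomposition 2} shows that $\bar Y_{i2}^{u_0,u_i}(t)$ is $\mathcal{F}_t^{\bar Y_i^{u_0,u_i}}$-measurable. Subtracting, $\bar Y_{i1}(t)=\bar Y_i^{u_0,u_i}(t)-\bar Y_{i2}^{u_0,u_i}(t)$ is $\mathcal{F}_t^{\bar Y_i^{u_0,u_i}}$-measurable, and right-continuity gives the filtration inclusion.

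For the reverse inclusion $\mathcal{F}_t^{\bar Y_i^{u_0,u_i}}\subseteq\mathcal{F}_t^{\bar Y_{i1}}$, I would use that $u_i\in\mathcal U_i^d\subseteq\mathcal U_i$ is $\mathcal{F}_t^{\bar Y_{i1}}$-adapted. The same Volterra representation then yields that $\bar Y_{i2}^{u_0,u_i}(t)$ is $\mathcal{F}_t^{\bar Y_{i1}}$-measurable, whence $\bar Y_i^{u_0,u_i}(t)=\bar Y_{i1}(t)+\bar Y_{i2}^{u_0,u_i}(t)$ is $\mathcal{F}_t^{\bar Y_{i1}}$-measurable, giving the desired inclusion. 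Combining the two directions yields the claimed equality of filtrations.

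The main technical obstacle I anticipate is verifying cleanly that the pathwise map $u_i(\cdot)\mapsto\bar Y_{i2}^{u_0,u_i}(\cdot)$ is indeed progressively measurable with respect to the filtration generated by $u_i$; this requires solving the mean-field ODE \eqref{follower limiting state decomposition 2} for $\bar x_{i2}^{u_0,u_i}$ by first taking expectations to determine $\mathbb E\bar x_{i2}^{u_0,u_i}(\cdot)$ as a deterministic function (using Assumption \ref{A1} to justify integrability and the Gr\"onwall-type bounds), and then recovering the pathwise solution via the fundamental matrix of $A(\cdot)$. Once this representation is established, each direction of the filtration equality follows by substitution, so the whole argument reduces to a careful measurability bookkeeping.
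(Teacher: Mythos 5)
Your proposal is correct and takes essentially the same approach as the paper's own proof: both rely on the additive decomposition $\bar Y_i^{u_0,u_i}=\bar Y_{i1}+\bar Y_{i2}^{u_0,u_i}$ and on propagating adaptedness of $u_i$ through the (pathwise deterministic) equations \eqref{follower limiting state decomposition 2}--\eqref{follower limiting observation decomposition 2}, with each inclusion drawn from the corresponding adaptedness property built into the definition of $\mathcal{U}_i^{d}$. The paper merely states the adaptedness propagation without spelling out the variation-of-constants representation that you make explicit.
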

	
	\begin{proof}
		For any $u_i(\cdot) \in \mathcal{U}_i^d$, since $u_i(\cdot)$ is $\mathcal{F}_t^{\bar{Y}_{i1}}$-adapted, we know that $\bar{x}_{i2}^{u_0,u_i}(\cdot)$ is $\mathcal{F}_t^{\bar{Y}_{i1}}$-adapted by (\ref{follower limiting state decomposition 2}), thus $\bar{Y}_{i2}^{u_0,u_i}(\cdot)$ is also $\mathcal{F}_t^{\bar{Y}_{i1}}$-adapted by (\ref{follower limiting observation decomposition 2}). Then $\bar{Y}_{i}^{u_0,u_i}(\cdot)=\bar{Y}_{i1}(\cdot)+\bar{Y}_{i2}^{u_0,u_i}(\cdot)$ is $\mathcal{F}_t^{\bar{Y}_{i1}}$-adapted, which implies that $\mathcal{F}_t^{\bar{Y}_{i}^{u_0,u_i}} \subseteq \mathcal{F}_t^{\bar{Y}_{i1}}$. According to a similar argument, we obtain $\mathcal{F}_t^{\bar{Y}_{i1}} \subseteq \mathcal{F}_t^{\bar{Y}_{i}^{u_0,u_i}}$ via the equality $\bar{Y}_{i1}(\cdot)=\bar{Y}_{i}^{u_0,u_i}(\cdot)-\bar{Y}_{i2}^{u_0,u_i}(\cdot)$.
	\end{proof}
	
	Applying Lemma \ref{follower lemma1} and noticing the definition of  $\mathcal{U}_i^{d}$, we find that the filtration $\mathcal{F}_t^{\bar{Y}_{i1}}$ does not depend on control $u_i(\cdot)$ because $\bar{x}_{i1}(\cdot)$ and $\bar{Y}_{i1}(\cdot)$ are not related to $u_i(\cdot)$. Therefore, the admissible control set we have defined in (\ref{follower admissible control}) overcomes the difficulty of circular dependency. This approach has been widely used in classical literature, see \cite{Bensoussan-1992,Bensoussan-Feng-Huang-2021,Chen-Du-Wu-2024,Li-Li-Wu-2025,Wang-Wu-Xiong-2015}. Applying Lemma \ref{follower lemma1}, we have the following result.
	\begin{mylem}\label{follower lemma2}
		Let Assumption \ref{A1} hold. Given $u_0(\cdot) \in \mathcal{U}_{0}^{d}$, then we have
		\begin{equation*}
			\inf _{u_i(\cdot) \in\, \mathcal{U}_i^{d}} J_i\left(u_i(\cdot),u_0(\cdot)\right)=\inf _{u_i(\cdot) \in\, \mathcal{U}_i} J_i\left(u_i(\cdot),u_0(\cdot)\right).
		\end{equation*}
	\end{mylem}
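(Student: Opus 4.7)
The plan is to prove the identity by establishing the set equality $\mathcal{U}_i^{d} = \mathcal{U}_i$, from which equality of infima follows immediately. One containment is trivial: by the very definition \eqref{follower admissible control}, every $u_i(\cdot) \in \mathcal{U}_i^d$ already lies in $\mathcal{U}_i$, hence
\begin{equation*}
\inf_{u_i(\cdot) \in \mathcal{U}_i^{d}} J_i\bigl(u_i(\cdot),u_0(\cdot)\bigr) \ \geq\ \inf_{u_i(\cdot) \in \mathcal{U}_i} J_i\bigl(u_i(\cdot),u_0(\cdot)\bigr).
\end{equation*}

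For the reverse inequality I will show that every $u_i(\cdot) \in \mathcal{U}_i$ is in fact admissible in $\mathcal{U}_i^d$. Take an arbitrary $u_i(\cdot) \in \mathcal{U}_i$; by the definition of $\mathcal{U}_i$ it is $\mathcal{F}_t^{\bar{Y}_{i1}}$-adapted with $\mathbb{E}[\sup_{0\le t\le T}|u_i(t)|^2]<\infty$. The argument in the proof of Lemma \ref{follower lemma1} only uses $\mathcal{F}_t^{\bar{Y}_{i1}}$-adaptedness of $u_i(\cdot)$: since $u_i(\cdot)$ is $\mathcal{F}_t^{\bar{Y}_{i1}}$-adapted, the pathwise ODE \eqref{follower limiting state decomposition 2} forces $\bar{x}_{i2}^{u_0,u_i}(\cdot)$ to be $\mathcal{F}_t^{\bar{Y}_{i1}}$-adapted (noting that $\mathbb{E}\bar{x}_{02}^{u_0}(\cdot)$ is deterministic), and then \eqref{follower limiting observation decomposition 2} makes $\bar{Y}_{i2}^{u_0,u_i}(\cdot)$ $\mathcal{F}_t^{\bar{Y}_{i1}}$-adapted as well. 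Combining this with $\bar{Y}_i^{u_0,u_i}(\cdot) = \bar{Y}_{i1}(\cdot) + \bar{Y}_{i2}^{u_0,u_i}(\cdot)$ in both directions yields $\mathcal{F}_t^{\bar{Y}_i^{u_0,u_i}} = \mathcal{F}_t^{\bar{Y}_{i1}}$, exactly as in Lemma \ref{follower lemma1}.

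Consequently $u_i(\cdot)$ is $\mathcal{F}_t^{\bar{Y}_i^{u_0,u_i}}$-adapted, so $u_i(\cdot) \in \mathcal{U}_i^{d}$. Therefore $\mathcal{U}_i \subseteq \mathcal{U}_i^{d}$, which combined with the obvious reverse inclusion gives $\mathcal{U}_i = \mathcal{U}_i^{d}$ and hence equality of the two infima. The only subtle point, and the main thing worth double-checking, is that the derivation of $\mathcal{F}_t^{\bar{Y}_i^{u_0,u_i}} = \mathcal{F}_t^{\bar{Y}_{i1}}$ in Lemma \ref{follower lemma1} never invokes anything beyond $\mathcal{F}_t^{\bar{Y}_{i1}}$-adaptedness of $u_i(\cdot)$ (the square-integrability in $\mathcal{U}_i$ only serves to make the cost functional well-defined). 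Once this is confirmed, the proof is essentially a one-line observation based on Lemma \ref{follower lemma1}.
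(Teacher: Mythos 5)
There is a genuine gap, and it sits exactly at the point you flagged as ``the main thing worth double-checking.'' The easy half is fine: $\mathcal{U}_i^d\subseteq\mathcal{U}_i$ by definition, and for any $u_i(\cdot)\in\mathcal{U}_i$ the containment $\mathcal{F}_t^{\bar{Y}_i^{u_0,u_i}}\subseteq\mathcal{F}_t^{\bar{Y}_{i1}}$ does follow from $\mathcal{F}_t^{\bar{Y}_{i1}}$-adaptedness of $u_i(\cdot)$ alone, exactly as you describe. But that containment goes the wrong way for your purposes: to conclude $u_i(\cdot)\in\mathcal{U}_i^d$ you need $u_i(\cdot)$ to be $\mathcal{F}_t^{\bar{Y}_i^{u_0,u_i}}$-adapted, i.e.\ adapted to the possibly \emph{smaller} filtration, which requires the reverse containment $\mathcal{F}_t^{\bar{Y}_{i1}}\subseteq\mathcal{F}_t^{\bar{Y}_i^{u_0,u_i}}$. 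In the proof of Lemma \ref{follower lemma1} that reverse containment is obtained by arguing that $\bar{Y}_{i2}^{u_0,u_i}(\cdot)$ is $\mathcal{F}_t^{\bar{Y}_i^{u_0,u_i}}$-adapted, which in turn uses that $u_i(\cdot)$ is $\mathcal{F}_t^{\bar{Y}_i^{u_0,u_i}}$-adapted --- precisely the membership in $\mathcal{U}_i^d$ that you are trying to establish. Applied to a general $u_i(\cdot)\in\mathcal{U}_i$, the ``similar argument'' is therefore circular, and your claimed set equality $\mathcal{U}_i=\mathcal{U}_i^d$ does not follow. Recovering $\bar{Y}_{i1}$ from $\bar{Y}_i^{u_0,u_i}$ amounts to inverting, in an adapted way, a Volterra-type map whose feedback functional $\bar{Y}_{i1}\mapsto u_i$ is only measurable, not continuous; this is exactly the nontrivial content of the backward separation principle, and it is why the lemma is stated as an equality of \emph{infima} rather than of admissible sets.

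The paper does not reprove this statement but invokes Lemma 2.3 of \cite{Wang-Wu-Xiong-2015}, where the nontrivial inequality $\inf_{\mathcal{U}_i^d}J_i\leq\inf_{\mathcal{U}_i}J_i$ is obtained without claiming $\mathcal{U}_i\subseteq\mathcal{U}_i^d$: one approximates an arbitrary $\mathcal{F}_t^{\bar{Y}_{i1}}$-adapted control by controls that depend only on strictly delayed (or piecewise-constant-in-time) observation data --- such controls break the circular dependency and are genuinely admissible --- and then shows the corresponding costs converge, using the continuity of $u_i\mapsto J_i(u_i,u_0)$. To repair your proof you would need to supply an argument of this approximation type (or otherwise justify $\mathcal{F}_t^{\bar{Y}_{i1}}\subseteq\mathcal{F}_t^{\bar{Y}_i^{u_0,u_i}}$ for arbitrary merely measurable adapted feedback), rather than cite Lemma \ref{follower lemma1}.
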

	
	The proof can refer to the Lemma 2.3 of \cite{Wang-Wu-Xiong-2015}, so we omit it here. Similarly, Utilizing the definition of the leader's admissible control set, we have the following results.
	\begin{mylem}\label{leader lemma1}
		For any $u_0(\cdot) \in\, \mathcal{U}_0^{d}$, $\mathcal{F}_t^{\bar{Y}_{0}^{u_0}}=\mathcal{F}_t^{\bar{Y}_{01}}.$
	\end{mylem}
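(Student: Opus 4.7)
The plan is to mirror the argument used for Lemma \ref{follower lemma1}, since the leader's decomposition \eqref{leader limiting state decomposition 2}--\eqref{leader limiting observation decomposition 2} has exactly the same structure as the followers': the ``controlled parts'' $(\bar{x}_{02}^{u_0},\bar{Y}_{02}^{u_0})$ are driven by no new Brownian motion, only by $u_0$, by deterministic coefficients, by the deterministic terms $z_2(\cdot), b_0(\cdot), h_0(\cdot)$, and by the deterministic mean $\mathbb{E}\bar{x}_{02}^{u_0}(\cdot)$. So the heart of the proof will be the pathwise claim that $\bar{x}_{02}^{u_0}(\cdot)$ and $\bar{Y}_{02}^{u_0}(\cdot)$ are measurable with respect to the filtration generated by $u_0(\cdot)$ alone.

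First I would establish the inclusion $\mathcal{F}_t^{\bar{Y}_{0}^{u_0}}\subseteq\mathcal{F}_t^{\bar{Y}_{01}}$. By the definition of $\mathcal{U}_0^d$ and $\mathcal{U}_0$, $u_0(\cdot)$ is $\mathcal{F}_t^{\bar{Y}_{01}}$-adapted. Taking expectation in \eqref{leader limiting state decomposition 2} yields a closed linear ODE for the deterministic function $\mathbb{E}\bar{x}_{02}^{u_0}(\cdot)$, whose solution is deterministic; substituting back into \eqref{leader limiting state decomposition 2} gives a pathwise linear integral equation for $\bar{x}_{02}^{u_0}(\cdot)$ whose only stochastic input is $u_0(\cdot)$. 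Hence $\bar{x}_{02}^{u_0}(\cdot)$, and therefore $\bar{Y}_{02}^{u_0}(\cdot)$ via \eqref{leader limiting observation decomposition 2}, is $\mathcal{F}_t^{\bar{Y}_{01}}$-adapted. Using $\bar{Y}_0^{u_0}(\cdot)=\bar{Y}_{01}(\cdot)+\bar{Y}_{02}^{u_0}(\cdot)$ then gives the desired inclusion.

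For the reverse inclusion $\mathcal{F}_t^{\bar{Y}_{01}}\subseteq\mathcal{F}_t^{\bar{Y}_{0}^{u_0}}$, I would invoke the other defining property of $\mathcal{U}_0^d$, namely that $u_0(\cdot)$ is $\mathcal{F}_t^{\bar{Y}_{0}^{u_0}}$-adapted. Repeating exactly the same pathwise ODE argument (with $\mathcal{F}_t^{\bar{Y}_{01}}$ replaced by $\mathcal{F}_t^{\bar{Y}_{0}^{u_0}}$) shows that $\bar{Y}_{02}^{u_0}(\cdot)$ is $\mathcal{F}_t^{\bar{Y}_{0}^{u_0}}$-adapted, and then the identity $\bar{Y}_{01}(\cdot)=\bar{Y}_{0}^{u_0}(\cdot)-\bar{Y}_{02}^{u_0}(\cdot)$ completes the proof. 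The only real subtlety — and the step that could trip one up — is the presence of the mean field term $\bar{A}_0(t)\mathbb{E}\bar{x}_{02}^{u_0}(t)$, but since this term is deterministic it is handled cleanly by first solving the ODE for the mean and then treating \eqref{leader limiting state decomposition 2} pathwise; no McKean--Vlasov fixed-point argument is needed.
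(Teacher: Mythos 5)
Your proof is correct and follows essentially the same route the paper takes: the paper omits the proof of this lemma, stating it follows "similarly" from the proof of Lemma \ref{follower lemma1}, and your argument is exactly that adaptation (use that $u_0$ is $\mathcal{F}_t^{\bar{Y}_{01}}$-adapted to get one inclusion via $\bar{Y}_0^{u_0}=\bar{Y}_{01}+\bar{Y}_{02}^{u_0}$, and that $u_0$ is $\mathcal{F}_t^{\bar{Y}_0^{u_0}}$-adapted to get the reverse). Your extra observation that the mean-field term is handled by first solving the deterministic ODE for $\mathbb{E}\bar{x}_{02}^{u_0}$ is a correct and slightly more careful elaboration of a step the paper leaves implicit.
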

	
	\begin{mylem}\label{leader lemma2}
		Let Assumption \ref{A1} hold, then we have
		\begin{equation*}
			\inf _{u_0(\cdot) \in\, \mathcal{U}_0^{d}} J_0\left(u_0(\cdot)\right)=\inf _{u_0(\cdot) \in\, \mathcal{U}_0} J_0\left(u_0(\cdot)\right)\,.
		\end{equation*}
	\end{mylem}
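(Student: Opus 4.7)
The plan is to prove the two inequalities separately, following the template of Lemma \ref{follower lemma2} with Lemma \ref{leader lemma1} playing the role of Lemma \ref{follower lemma1}. By construction, $\mathcal{U}_0^d \subseteq \mathcal{U}_0$, so the inequality
$$\inf_{u_0(\cdot) \in\, \mathcal{U}_0^{d}} J_0(u_0(\cdot)) \geq \inf_{u_0(\cdot) \in\, \mathcal{U}_0} J_0(u_0(\cdot))$$
is immediate, because enlarging the class of admissible controls can only decrease the infimum.

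For the reverse inequality, the goal is to show that in fact $\mathcal{U}_0 \subseteq \mathcal{U}_0^{d}$, i.e., every control that is $\mathcal{F}_t^{\bar{Y}_{01}}$-adapted is already $\mathcal{F}_t^{\bar{Y}_0^{u_0}}$-adapted. I would fix an arbitrary $u_0(\cdot) \in \mathcal{U}_0$ and invoke the same argument that justifies Lemma \ref{leader lemma1}: since $u_0(\cdot)$ is $\mathcal{F}_t^{\bar{Y}_{01}}$-adapted, inspection of the ODE-type equations (\ref{leader limiting state decomposition 2}) and (\ref{leader limiting observation decomposition 2}) (driven only by $u_0$ and the deterministic data $z_2,b_0,h_0$) shows that $\bar{x}_{02}^{u_0}(\cdot)$ and hence $\bar{Y}_{02}^{u_0}(\cdot)$ are $\mathcal{F}_t^{\bar{Y}_{01}}$-adapted. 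Combined with $\bar{Y}_{0}^{u_0}(\cdot)=\bar{Y}_{01}(\cdot)+\bar{Y}_{02}^{u_0}(\cdot)$, this yields $\mathcal{F}_t^{\bar{Y}_0^{u_0}} \subseteq \mathcal{F}_t^{\bar{Y}_{01}}$, and the opposite inclusion follows from $\bar{Y}_{01}(\cdot)=\bar{Y}_0^{u_0}(\cdot)-\bar{Y}_{02}^{u_0}(\cdot)$. Hence $\mathcal{F}_t^{\bar{Y}_0^{u_0}}=\mathcal{F}_t^{\bar{Y}_{01}}$, so $u_0(\cdot)$ is $\mathcal{F}_t^{\bar{Y}_0^{u_0}}$-adapted, i.e., $u_0(\cdot) \in \mathcal{U}_0^{d}$.

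Once $\mathcal{U}_0 \subseteq \mathcal{U}_0^{d}$ is established, we conclude
$$\inf_{u_0(\cdot) \in\, \mathcal{U}_0^{d}} J_0(u_0(\cdot)) \leq \inf_{u_0(\cdot) \in\, \mathcal{U}_0} J_0(u_0(\cdot)),$$
and combining this with the trivial inequality yields the desired equality. I do not anticipate any real obstacle here: the only subtlety is the observation that although Lemma \ref{leader lemma1} is formally stated for $u_0(\cdot) \in \mathcal{U}_0^{d}$, its proof depends only on $u_0(\cdot)$ being $\mathcal{F}_t^{\bar{Y}_{01}}$-adapted, which is exactly the defining property of $\mathcal{U}_0$. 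This is the same shortcut used in \cite{Wang-Wu-Xiong-2015,Bensoussan-Feng-Huang-2021} to dissolve the circular dependency between the partially observable control and its observation.
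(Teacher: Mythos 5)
Your proof is correct and is essentially the argument the paper has in mind: the paper omits the proof, referring to Lemma 2.3 of \cite{Wang-Wu-Xiong-2015}, whose reasoning is exactly your two-inclusion argument showing $\mathcal{U}_0=\mathcal{U}_0^d$ via the filtration identity $\mathcal{F}_t^{\bar{Y}_0^{u_0}}=\mathcal{F}_t^{\bar{Y}_{01}}$. Your remark that Lemma \ref{leader lemma1} really only uses $\mathcal{F}_t^{\bar{Y}_{01}}$-adaptedness is the right (and only) subtlety to flag.
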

	Below, we will first address the extreme followers' control problem using the backward separation principle, and then proceed to solve the leader's problem.
	
	\subsection{Open-loop decentralized strategies of the followers}
	
	In the remainder of this section, for brevity and without causing ambiguity, we omit the time dependency of some functions and stochastic processes, except for the terminal time.
	
	Given $u_0(\cdot) \in \mathcal{U}_{0}^{d}$ and $z(\cdot)\in L^2(0,T;\mathbb{R}^n)$, equation (\ref{leader limiting state}) admits a unique solution $\bar{x}_0^{u_0}(\cdot)\in \mathcal{F}^0_t$. By applying the standard variational method and dual technique, we can obtain the open-loop decentralized optimal strategies for the sub-problem (i) of \textbf{Problem \ref{problem decentralized}}.
	
	\begin{mythm}\label{open loop thm1 of follower}
		Let Assumptions \ref{A1} hold. For given $u_0(\cdot) \in \mathcal{U}_{0}^{d}$ and $z(\cdot)\in L^2(0,T;\mathbb{R}^n)$, then for $i=1, \cdots, N$, the open-loop decentralized optimal strategy of $i$-th follower $\mathcal{A}_i$ is given by
		\begin{equation}\label{follower open loop optimal control}
			u_i^*=-R^{-1}B^\top \mathbb{E}\Big[\left.p_i\right\rvert \mathcal{F}_t^{\bar{Y}^{u_0,u_i^*}_{i}}\Big]-R^{-1}S\mathbb{E}\Big[\left.{\bar{x}^{u_0,u^*_i}}_i\right\rvert \mathcal{F}_t^{\bar{Y}^{u_0,u^*_i}_{i}}\Big]+R^{-1}S\bar{\Gamma}_2\mathbb{E}\bar{x}^{u_0,u^*_i}_i,
		\end{equation}
		where $\Big(\bar{x}_i^{u_0,u_i^*}(\cdot), p_i(\cdot), q_i(\cdot),\bar{q}_i(\cdot), q_{i0}(\cdot),\bar{q}_{i0}(\cdot)\Big)\in L^2_{\mathcal{F}_t^i}(0,T;\mathbb{R}^n)\times L_{\widetilde{\mathcal{F}}_t^i}^2\left(0, T ; \mathbb{R}^n\right) \times L_{\widetilde{\mathcal{F}}_t^i}^2\left(0, T ; \mathbb{R}^{n\times n}\right) \times L_{\widetilde{\mathcal{F}}_t^i}^2\left(0, T ; \mathbb{R}^{n\times n}\right) \times L_{\widetilde{\mathcal{F}}_t^i}^2\left(0, T ; \mathbb{R}^{n\times n}\right)\times L_{\widetilde{\mathcal{F}}_t^i}^2\left(0, T ; \mathbb{R}^{n\times n}\right)$ satisfies the following stochastic Hamiltonian system:
		\begin{equation}\label{follower Hamiltonian system}
			\left\{\begin{aligned}
				d \bar{x}^{u_0,u_i^*}_i  &= \left[A \bar{x}^{u_0,u_i^*}_i+B u^*_i+\bar{A} \mathbb{E}\bar{x}^{u_0,u_i^*}_i+C z+F \mathbb{E}\bar{x}^{u_0}_0+b\right] d t +\sigma d W_i+\bar{\sigma} d \overline{W}_i \\
				d p_i&=-\left[A^\top p_i+\bar{A}^\top\mathbb{E}p_i +Q\bar{x}_i^{u_0,u_i^*}+\tilde{\Gamma}_2 \mathbb{E}\bar{x}^{u_0,u_i^*}_i +\left(\bar{\Gamma}_2^\top-I_n\right)Q\Gamma_2 z\right.\\
				&\qquad \left.+S^\top u_i^*-\bar{\Gamma}_2^\top S^\top\mathbb{E}u_i^* -Q\Gamma_3 \bar{x}_0^{u_0} +\tilde{\Gamma}_3 \mathbb{E}\bar{x}_0^{u_0}+\left(\bar{\Gamma}_2^\top-I_n\right)Q\eta_2\right] d t\\
                &\quad +q_i d W_i+\bar{q}_i d\overline{W}_i+q_{i0} d W_0+\bar{q}_{i0} d \overline{W}_0 ,\\
				\bar{x}_i^{u_0,u_i^*}(0)&=\xi,\quad p_i(T)=G\bar{x}_i^{u_0,u_i^*}(T)+\tilde{\Gamma}_4 \mathbb{E}\bar{x}^{u_0,u_i^*}_i(T) +\left(\bar{\Gamma}_4^\top-I_n\right)G\Gamma_4 z(T)\\
				&\quad -G\Gamma_5 \bar{x}_0^{u_0}(T) +\tilde{\Gamma}_5 \mathbb{E}\bar{x}_0^{u_0}(T)+\left(\bar{\Gamma}_4^\top-I_n\right)Q\eta_4,
			\end{aligned}\right.
		\end{equation}
		where
		$$
		\begin{aligned}
			\tilde{\Gamma}_2:=\bar{\Gamma}_2^\top Q\bar{\Gamma}_2-Q\bar{\Gamma}_2-\bar{\Gamma}_2^\top Q, \quad \tilde{\Gamma}_3:=\bar{\Gamma}_2^\top Q\left(\Gamma_3+\bar{\Gamma}_3\right)-Q\bar{\Gamma}_3, \\
			\tilde{\Gamma}_4:=\bar{\Gamma}_4^\top G\bar{\Gamma}_4-G\bar{\Gamma}_4-\bar{\Gamma}_4^\top G, \quad \tilde{\Gamma}_5:=\bar{\Gamma}_4^\top G\left(\Gamma_5+\bar{\Gamma}_5\right)-G\bar{\Gamma}_5,
		\end{aligned}
		$$
		and $\bar{Y}^{u_0,u_i^*}_i(\cdot)$ satisfies
		\begin{equation*}
			\left\{\begin{aligned}
				d \bar{Y}^{u_0,u_i^*}_i  &=\ \left[H \bar{x}^{u_0,u_i^*}_i+\bar{H} \mathbb{E}\bar{x}^{u_0,u_i^*}_i+I z+h\right] d t +f d \overline{W}_i , \\
				\bar{Y}^{u_0,u_i^*}_i(0) & =0\,.
			\end{aligned}\right.
		\end{equation*}
	\end{mythm}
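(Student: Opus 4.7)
The plan is to exploit the convex quadratic structure of $J_i(\cdot,u_0)$ together with the backward separation principle. By Lemma \ref{follower lemma2} the minimization over $\mathcal{U}_i^{d}$ can be replaced by the same minimization over the larger set $\mathcal{U}_i$, whose admissible controls are $\mathcal{F}_t^{\bar Y_{i1}}$-adapted, and by Lemma \ref{follower lemma1} this filtration is independent of the optimizer. Under Assumption \ref{A1}, the map $u_i \mapsto J_i(u_i,u_0)$ is strictly convex and coercive (since $R>0$ and $Q - S^{\top} R^{-1} S \ge 0$), yielding existence and uniqueness of a minimizer $u_i^{*}$ characterized by the first-order (G\^{a}teaux) stationarity condition.

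\textbf{Variational analysis.} First I would perform a convex perturbation $u_i^{\varepsilon} := u_i^{*} + \varepsilon v_i$ for arbitrary $v_i \in \mathcal{U}_i$ and derive the linear variational SDE for $y_i := \tfrac{d}{d\varepsilon}\bar x_i^{u_0, u_i^{\varepsilon}}\big|_{\varepsilon=0}$, namely
\begin{equation*}
dy_i = \bigl[A y_i + \bar A\, \mathbb{E} y_i + B v_i\bigr]\,dt, \qquad y_i(0) = 0.
\end{equation*}
Differentiating $J_i(u_i^{\varepsilon},u_0)$ at $\varepsilon = 0$ and applying the mean-field chain rule to the terms involving $\bar\Gamma_2 \mathbb{E}\bar x_i^{u_0,u_i^{*}}$, $\bar\Gamma_3 \mathbb{E}\bar x_0^{u_0}$ and their terminal counterparts (this is precisely what produces the abbreviations $\tilde\Gamma_2,\tilde\Gamma_3,\tilde\Gamma_4,\tilde\Gamma_5$ when collecting the coefficients of $y_i$ and $\mathbb{E} y_i$), one obtains a variational identity expressed as an integral against $y_i$ and $v_i$.

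\textbf{Adjoint and duality.} Next, introduce the adjoint BSDE of the statement for $(p_i,q_i,\bar q_i,q_{i0},\bar q_{i0})$, driven by all four Brownian motions $W_i,\overline W_i,W_0,\overline W_0$, so that its natural filtration is $\widetilde{\mathcal{F}}_t^{i}$. Its terminal condition and driver are chosen exactly so as to cancel the state-dependent pieces of the variational identity derived above. Applying It\^{o}'s formula to $\langle p_i,y_i\rangle$ on $[0,T]$, taking expectations so that the martingale parts vanish, and using Fubini-type identities such as $\mathbb{E}\langle \alpha,\bar\Gamma_2 \mathbb{E} y_i\rangle = \langle \bar\Gamma_2^{\top} \mathbb{E}\alpha,\mathbb{E} y_i\rangle$ to rearrange the mean-field corrections, one reduces the stationarity condition to
\begin{equation*}
\mathbb{E}\int_0^T \bigl\langle R u_i^{*} + B^{\top} p_i + S \bar x_i^{u_0,u_i^{*}} - S\bar\Gamma_2\,\mathbb{E}\bar x_i^{u_0,u_i^{*}},\, v_i\bigr\rangle\,dt = 0, \qquad \forall\, v_i \in \mathcal{U}_i.
\end{equation*}

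\textbf{Projection onto the observation filtration.} Finally, since $v_i$ ranges over all $\mathcal{F}_t^{\bar Y_{i1}}$-adapted processes and Lemma \ref{follower lemma1} identifies $\mathcal{F}_t^{\bar Y_{i1}}$ with $\mathcal{F}_t^{\bar Y_i^{u_0,u_i^{*}}}$, conditioning the bracket above on this observation filtration (and noting that $\mathbb{E}\bar x_i^{u_0,u_i^{*}}$ is deterministic, hence unaffected by the conditioning) and inverting $R$ produces the stated feedback formula. The main obstacle I anticipate is the careful mean-field bookkeeping: the cross-differentiation through $\bar\Gamma_2 \mathbb{E}\bar x_i^{u_0,u_i^{*}}$ and $\bar\Gamma_4 \mathbb{E}\bar x_i^{u_0,u_i^{*}}(T)$ is what forces the precise combinations $\tilde\Gamma_j$ appearing in the generator and terminal condition of the adjoint BSDE. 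A secondary subtlety is that this BSDE is driven by four Brownian motions and therefore naturally lives on $\widetilde{\mathcal{F}}_t^{i}$; its well-posedness is a standard consequence of linear BSDE theory under Assumption \ref{A1}.
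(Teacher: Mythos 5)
Your proposal is correct and follows essentially the same route as the paper's proof in the Appendix: reduction to $\mathcal{U}_i$ via Lemma \ref{follower lemma2}, a convex perturbation $u_i^{\varepsilon}=u_i^{*}+\varepsilon v_i$ with the same variational SDE, duality via It\^o's formula applied to $\langle p_i,\Delta\bar x_i\rangle$ to collapse the first-order term to $\mathbb{E}\int_0^T\langle Ru_i^{*}+B^{\top}p_i+S\bar x_i^{u_0,u_i^{*}}-S\bar\Gamma_2\mathbb{E}\bar x_i^{u_0,u_i^{*}},v_i\rangle\,dt=0$, and finally projection onto $\mathcal{F}_t^{\bar Y_{i1}}=\mathcal{F}_t^{\bar Y_i^{u_0,u_i^{*}}}$ by Lemma \ref{follower lemma1}. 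The only cosmetic difference is that you phrase the optimality condition as G\^ateaux stationarity of a strictly convex coercive functional, whereas the paper writes the explicit expansion $J_i(u_i^{\varepsilon})-J_i(u_i^{*})=\tfrac{\varepsilon^2}{2}X_1+\varepsilon X_2$ and argues $X_1\ge 0$, $X_2=0$; these are equivalent.
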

	
The proof is postponed to Appendix.
	
	\begin{myremark}
		For simplicity of notation, in the following text, we denote $\hat{\theta}_i:=\mathbb{E}\Big[\left.\theta_i\right\rvert \mathcal{F}_t^{\bar{Y}^{u_0,u_i^*}_i}\Big]$ as the conditional expectation of an arbitrary stochastic process $\theta_i$ with respect to $\mathcal{F}_t^{\bar{Y}^{u_0,u_i^*}_i}$.
	\end{myremark}
	
	Next, we will study the unknown frozen limiting state-average $z(\cdot)$, which is a deterministic function. When $N \rightarrow \infty$, we would like to approximate $x_i^{u_0,u^*}(\cdot)$ by $\bar{x}_i^{u_0,u_i^*}(\cdot)$, thus $\frac{1}{N} \sum_{i=1}^N x^{u_0,u^*}_i(\cdot)$ is approximated by $\frac{1}{N} \sum_{i=1}^N \bar{x}_i^{u_0,u_i^*}(\cdot)$. Since the $\mathcal{F}_t^{W_i, \overline{W}_i}$-adapted process $\bar{x}_i^{u_0,u_i^*}(\cdot)$ and $\mathcal{F}_t^{W_j, \overline{W}_j}$-adapted process $\bar{x}_j^{u_0,u_j^*}(\cdot)$ (for $1\leq i\neq j\leq N$) are independent and identically distributed, we can apply the strong law of large numbers (see Majerek et al. \cite{Majerek-Nowak-Zieba-2005}) to draw a conclusion:
	\begin{equation}\label{SLLN}
		z(\cdot)=\lim _{N \rightarrow \infty} \frac{1}{N} \sum_{i=1}^N \bar{x}_i^{u_0,u_i^*}(\cdot)=\mathbb{E}\bar{x}_i^{u_0,u_i^*}(\cdot).
	\end{equation}
	Then we notice the second equation of (\ref{follower Hamiltonian system}) is coupled with the state of leader $\mathcal{A}_0$, then we arrive at the following {\it coupled control} (CC) system, which is a {\it conditional mean field FBSDE} (CMF-FBSDE) about $\big(\bar{x}_0^{u_0}(\cdot),\bar{x}_i^{u_0,u_i^*}(\cdot), p_i(\cdot), q_i(\cdot),\bar{q}_i(\cdot), q_{i0}(\cdot),\bar{q}_{i0}(\cdot)\big)\in L^2_{\mathcal{F}_t^0}(0,T;\mathbb{R}^n)\times L^2_{\mathcal{F}_t^{i}}(0,T;\mathbb{R}^n)\times L_{\widetilde{\mathcal{F}}_t^i}^2\left(0, T ; \mathbb{R}^n\right) \times L_{\widetilde{\mathcal{F}}_t^i}^2\left(0, T ; \mathbb{R}^{n\times n}\right) \times L_{\widetilde{\mathcal{F}}_t^i}^2\left(0, T ; \mathbb{R}^{n\times n}\right)\times L_{\widetilde{\mathcal{F}}_t^i}^2\left(0, T ; \mathbb{R}^{n\times n}\right)\times L_{\widetilde{\mathcal{F}}_t^i}^2\left(0, T ; \mathbb{R}^{n\times n}\right)$:
	\begin{equation}\label{CC system}
		\left\{\begin{aligned}
			d \bar{x}_0^{u_0} &= \left[A_0 \bar{x}_0^{u_0}+B_0 u_0+\bar{A}_0 \mathbb{E}\bar{x}_0^{u_0}+C_0 \mathbb{E}\bar{x}_i^{u_0,u_i^*}+b_0\right] d t +\sigma_0 d W_0+\bar{\sigma}_0 d \overline{W}_0, \\
			d \bar{x}^{u_0,u_i^*}_i &= \left[A \bar{x}^*_i-BR^{-1}B^\top \hat{p}_i-BR^{-1}S\hat{\bar{x}}_i^{u_0,u_i^*}+\left(\bar{A}+C+BR^{-1}S\bar{\Gamma}_2\right) \mathbb{E}\bar{x}_i^{u_0,u_i^*}\right.\\
			&\quad +F \mathbb{E}\bar{x}_0^{u_0}+b\big] d t+\sigma d W_i+\bar{\sigma} d \overline{W}_i, \\
			d p_i&=\left\{A^\top p_i-S^\top R^{-1}B^\top\hat{p}_i+\left(\bar{A}^\top+\bar{\Gamma}_2^\top S^\top R^{-1}B^\top\right)\mathbb{E}p_i +Q\bar{x}_i^{u_0,u_i^*} \right.\\
			&\quad -S^\top R^{-1}S\hat{\bar{x}}^{u_0,u_i^*}_i+\left[\tilde{\Gamma}_2+S^\top R^{-1}S\bar{\Gamma}_2+\bar{\Gamma}_2^\top S^\top R^{-1}S\left(I_n-\bar{\Gamma}_2\right)\right.\\
			&\left.\left.\quad +\left(\bar{\Gamma}_2^\top-I_n\right)Q\Gamma_2\right] \mathbb{E}\bar{x}_i^{u_0,u_i^*}-Q\Gamma_3 \bar{x}_0^{u_0} +\tilde{\Gamma}_3 \mathbb{E}\bar{x}_0^{u_0} +\left(\bar{\Gamma}_2^\top-I_n\right)Q\eta_2\right\} d t\\
            &\quad +q_i d W_i+\bar{q}_i d\overline{W}_i+q_{i0} d W_0+\bar{q}_{i0} d \overline{W}_0 ,\\
			\bar{x}_0^{u_0}(0)&= \xi_0,\quad \bar{x}_i^{u_0,u_i^*}(0)=\ \xi,\\
			p_i^*(T)&=G\bar{x}_i^{u_0,u_i^*}(T)+\left[\tilde{\Gamma}_4  +\left(\bar{\Gamma}_4^\top-I_n\right)G\Gamma_4\right] \mathbb{E}\bar{x}_i^{u_0,u_i^*}(T) -G\Gamma_5 \bar{x}_0^{u_0}(T) \\
			&\quad +\tilde{\Gamma}_5 \mathbb{E}\bar{x}_0^{u_0}(T)+\left(\bar{\Gamma}_4^\top-I_n\right)G\eta_4,
		\end{aligned}\right.
	\end{equation}
	where we replace $z(\cdot)$ with $\mathbb{E}\bar{x}_i^{u_0,u_i^*}(\cdot)$.
	Noticing that the open-loop decentralized optimal strategy (\ref{follower open loop optimal control}) of $i$-th follower $\mathcal{A}_i$ contains the optimal filtering estimate $\hat{p}_i$, we need to derive the filtering equation that $\hat{p}_i$ satisfies. However, $p_i$ is influenced by $\bar{x}_0^{u_0}$ and $\bar{x}^{u_0,u_i^*}_i$ in equation (\ref{CC system}), so we also need to obtain the filtering equation for $\bar{x}_0^{u_0}$ and $\bar{x}^{u_0,u_i^*}_i$. Then, we get the following filtering result (see Liptser and Shiryayev \cite{Liptser-Shiryayev-1977}).
	\begin{mypro}
		Let Assumption \ref{A1} hold. For any $u_0(\cdot) \in \mathcal{U}_0^d$, the optimal filtering of the solution to (\ref{CC system}) with respect to $\mathcal{F}_t^{\bar{Y}_i^{u_0,u_i^*}}$ satisfies
		\begin{equation}\label{filtering of CC system}
			\left\{\begin{aligned}
				d \mathbb{E}\bar{x}_0^{u_0}  &= \left[\left(A_0+\bar{A}_0\right) \mathbb{E}\bar{x}_0^{u_0}+B_0 \mathbb{E}u_0+C_0 \mathbb{E}\bar{x}_i^{u_0,u_i^*}+b_0\right] d t,  \\
				d \hat{\bar{x}}^{u_0,u_i^*}_i  &= \Big[\left(A-BR^{-1}S\right) \hat{\bar{x}}^{u_0,u_i^*}_i-BR^{-1}B^\top \hat{p}_i+\left(\bar{A}+C+BR^{-1}S\bar{\Gamma}_2\right) \mathbb{E}\bar{x}_i^{u_0,u_i^*}\\
				&\quad +F \mathbb{E}\bar{x}_0^{u_0}+b\Big] d t+\left(\bar{\sigma}+\Pi H^\top (f^{\top})^{-1}\right) d \widetilde{W}_i, \\
				d \hat{p}_i&=-\left\{\left(A^\top-S^\top R^{-1}B^\top\right) \hat{p}_i+\left(\bar{A}^\top+\bar{\Gamma}_2^\top S^\top R^{-1}B^\top\right)\mathbb{E}p_i  \right.\\
				&\qquad +\left(Q-S^\top R^{-1}S\right)\hat{\bar{x}}_i^{u_0,u_i^*}+\left[\tilde{\Gamma}_2+S^\top R^{-1}S\bar{\Gamma}_2+\bar{\Gamma}_2^\top S^\top R^{-1}S\left(I_n-\bar{\Gamma}_2\right)\right.  \\
				&\qquad +\left.\left.\left(\bar{\Gamma}_2^\top-I_n\right)Q\Gamma_2\right] \mathbb{E}\bar{x}_i^{u_0,u_i^*}+\left(\tilde{\Gamma}_3-Q\Gamma_3\right) \mathbb{E}\bar{x}_0^{u_0}+\left(\bar{\Gamma}_2^\top-I_n\right)Q\eta_2\right\} d t\\
                &\qquad +\tilde{q}_i d \widetilde{W}_i ,\\
				\mathbb{E}\bar{x}_0^{u_0}(0)&= \xi_0,\quad \hat{\bar{x}}_i^{u_0,u_i^*}(0)=\ \xi,\\
				\hat{p}_i(T)&=G\hat{\bar{x}}_i^{u_0,u_i^*}(T)+\left[\tilde{\Gamma}_4  +\left(\bar{\Gamma}_4^\top-I_n\right)G\Gamma_4\right] \mathbb{E}\bar{x}_i^{u_0,u_i^*}(T)  \\
				&\quad +\left(\tilde{\Gamma}_5-G\Gamma_5\right)\mathbb{E} \bar{x}_0^{u_0}(T)+\left(\bar{\Gamma}_4^\top-I_n\right)G\eta_4,
			\end{aligned}\right.
		\end{equation}
		where $\widetilde{W}_i$ is an $R^n$-valued standard Brownian motion satisfying
		\begin{equation}
			\widetilde{W}_i:=\int_0^t f^{-1} H\left(\bar{x}_i^{u_0,u_i^*}-\hat{\bar{x}}_i^{u_0,u_i^*}\right)ds+\overline{W}_i,
		\end{equation}
		and the mean square error $\Pi:=\mathbb{E}\left[\left(\bar{x}_i^{u_0,u_i^*}-\hat{\bar{x}}_i^{u_0,u_i^*}\right)\left(\bar{x}_i^{u_0,u_i^*}-\hat{\bar{x}}_i^{u_0,u_i^*}\right)^\top\right]$ is given by the following Riccati equation:
		\begin{equation}
			\left\{\begin{aligned}
				\dot{\Pi}&=A\Pi+\Pi A^\top -\left(\bar{\sigma}+\Pi H^\top (f^{\top})^{-1}\right)\left(\bar{\sigma}+\Pi H^\top (f^{\top})^{-1}\right)^\top+\sigma\sigma^\top+\bar{\sigma}\bar{\sigma}^\top ,\\
				\Pi(0)&=0.
			\end{aligned}\right.
		\end{equation}
	\end{mypro}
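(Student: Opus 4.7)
The strategy is to combine the standard Kalman--Bucy filtering apparatus for the forward SDE of $\bar{x}_i^{u_0,u_i^*}$ with an innovation-based martingale representation for the backward equation of $p_i$, and to exploit the specific independence structure of the noises to handle the coupling with the leader. First I would pin down the observation filtration: by Lemma \ref{follower lemma1}, $\mathcal{F}_t^{\bar{Y}_i^{u_0,u_i^*}}=\mathcal{F}_t^{\bar{Y}_{i1}}\subseteq \mathcal{F}_t^i=\sigma(W_i,\overline{W}_i)$. Because $u_0$ is $\mathcal{F}_t^{\bar{Y}_{01}}$-adapted by Lemma \ref{leader lemma1} and $\bar{Y}_{01}$ depends only on $(W_0,\overline{W}_0)$ through $\bar{x}_{01}$, the leader's state $\bar{x}_0^{u_0}$ is $\mathcal{F}_t^0$-adapted, hence independent of $\mathcal{F}_t^{\bar{Y}_i^{u_0,u_i^*}}$. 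Consequently $\mathbb{E}[\bar{x}_0^{u_0}(t)\mid\mathcal{F}_t^{\bar{Y}_i^{u_0,u_i^*}}]=\mathbb{E}\bar{x}_0^{u_0}(t)$; taking unconditional expectation on both sides of the first SDE in (\ref{CC system}) then yields the deterministic ODE for $\mathbb{E}\bar{x}_0^{u_0}$ stated in (\ref{filtering of CC system}).

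For the forward filter $\hat{\bar{x}}_i^{u_0,u_i^*}$, I would invoke the Kalman--Bucy theorem in the form given by Liptser and Shiryayev \cite{Liptser-Shiryayev-1977}, with signal taken from the second SDE of (\ref{CC system}) and observation given by the $\bar{Y}_i^{u_0,u_i^*}$ equation. All deterministic drift coefficients ($\mathbb{E}\bar{x}_i^{u_0,u_i^*}$, $\mathbb{E}\bar{x}_0^{u_0}$, $b$) pass through the conditional expectation unchanged; $A\bar{x}_i^*$ filters to $A\hat{\bar{x}}_i^*$, which combines with the already-filtered $-BR^{-1}S\,\hat{\bar{x}}_i^*$ to give the $(A-BR^{-1}S)\hat{\bar{x}}_i^*$ coefficient. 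The cross-covariance between the signal noise and the observation noise, coming from the shared $\overline{W}_i$ driver in $\bar{\sigma}\,d\overline{W}_i$ and $f\,d\overline{W}_i$, produces the filter gain $\bar{\sigma}+\Pi H^{\top}(f^{\top})^{-1}$ against the innovation, and the innovation itself is defined as the normalized observation residual $d\widetilde{W}_i=f^{-1}H(\bar{x}_i^{u_0,u_i^*}-\hat{\bar{x}}_i^{u_0,u_i^*})\,dt+d\overline{W}_i$, a standard Brownian motion in the observation filtration by L\'evy's characterization. The Riccati equation for the error covariance $\Pi$ then follows by applying It\^o's formula to $(\bar{x}_i^{u_0,u_i^*}-\hat{\bar{x}}_i^{u_0,u_i^*})(\bar{x}_i^{u_0,u_i^*}-\hat{\bar{x}}_i^{u_0,u_i^*})^{\top}$ and taking expectation; drift terms involving $\mathbb{E}\bar{x}_i^{u_0,u_i^*}$, $\mathbb{E}\bar{x}_0^{u_0}$, $b$, and $\hat{p}_i$ cancel in the difference of signal and filter, leaving precisely $A\Pi+\Pi A^{\top}+\sigma\sigma^{\top}+\bar{\sigma}\bar{\sigma}^{\top}-(\bar{\sigma}+\Pi H^{\top}(f^{\top})^{-1})(\bar{\sigma}+\Pi H^{\top}(f^{\top})^{-1})^{\top}$.

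For the backward equation, I would take conditional expectation on both sides of the $p_i$-equation in (\ref{CC system}) and express $\hat{p}_i$ as an It\^o process driven by the innovation $\widetilde{W}_i$. Each linear drift term is handled exactly as in the forward case: $Q\bar{x}_i^{u_0,u_i^*}$ collapses with $-S^{\top} R^{-1}S\,\hat{\bar{x}}_i^{u_0,u_i^*}$ into $(Q-S^{\top} R^{-1}S)\,\hat{\bar{x}}_i^{u_0,u_i^*}$; by independence, $-Q\Gamma_3\bar{x}_0^{u_0}$ becomes $-Q\Gamma_3\mathbb{E}\bar{x}_0^{u_0}$ and merges with $\tilde{\Gamma}_3\mathbb{E}\bar{x}_0^{u_0}$ into $(\tilde{\Gamma}_3-Q\Gamma_3)\mathbb{E}\bar{x}_0^{u_0}$; the remaining deterministic $\mathbb{E}$-terms are untouched, and the terminal condition $\hat{p}_i(T)$ is obtained by applying $\mathbb{E}[\cdot\mid\mathcal{F}_T^{\bar{Y}_i^{u_0,u_i^*}}]$ to the terminal condition in (\ref{CC system}). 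The main obstacle is rigorously justifying that the martingale part of $\hat{p}_i$ admits a single integral representation against the innovation $\widetilde{W}_i$, i.e., that the $W_i$, $W_0$, $\overline{W}_0$ components of the original adjoint's martingale part leave no residual after conditioning; this hinges on showing that $\widetilde{W}_i$ generates $\mathcal{F}_t^{\bar{Y}_i^{u_0,u_i^*}}$ up to completion and invoking the martingale representation theorem on the innovation filtration from Liptser--Shiryayev \cite{Liptser-Shiryayev-1977}, together with the independence of $(W_0,\overline{W}_0)$ from the follower's observation to suppress the $W_0,\overline{W}_0$ components.
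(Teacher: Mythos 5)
Your proposal is correct and follows essentially the same route as the paper, which offers no written proof for this proposition and simply invokes the classical filtering theory of Liptser and Shiryayev \cite{Liptser-Shiryayev-1977}: your use of Lemma \ref{follower lemma1} and the independence of $\mathcal{F}_t^0$ from $\mathcal{F}_t^{\bar{Y}_i^{u_0,u_i^*}}$ to reduce the leader's conditional expectation to $\mathbb{E}\bar{x}_0^{u_0}$, the Kalman--Bucy filter with correlated signal/observation noise giving the gain $\bar{\sigma}+\Pi H^{\top}(f^{\top})^{-1}$ and the stated Riccati equation, and the innovation-based martingale representation for $\hat{p}_i$ are exactly the ingredients the citation is standing in for. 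The one technical point you flag --- that the martingale part of $\hat{p}_i$ is representable against $\widetilde{W}_i$ alone --- is indeed the only step requiring care, and is handled by the innovation-filtration martingale representation theorem you name.
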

	
	\subsection{State feedback decentralized strategies of the followers}
	
	In this subsection, we derive the state feedback representation of the decentralized optimal strategies (\ref{follower open loop optimal control}) of the followers, through decoupling technique.
	
	Noting the terminal condition and structure of (\ref{CC system}), for each $i=1, \cdots, N$, we suppose
	\begin{equation}\label{follower decouple form}
		\hat{p}_i(t)=P_1(t)\left[\hat{\bar{x}}_i^{u_0,u_i^*}(t)-\mathbb{E}\bar{x}_i^{u_0,u_i^*}(t)\right]+P_2(t)\mathbb{E}\bar{x}_i^{u_0,u_i^*}(t)+P_3(t)\mathbb{E}\bar{x}^{u_0}_0(t) +\varphi^{u_0}(t),\quad t\in[0,T],
	\end{equation}
	with $P_1(T)=G$, $P_2(T)=G+\tilde{\Gamma}_4-\left(I_n+\bar{\Gamma}_4^\top\right)G\Gamma_4$, $P_3(T)=\tilde{\Gamma}_5-G\Gamma_5$, $\varphi^{u_0}(T)=\left(\bar{\Gamma}_4^\top-I_n\right)G\eta_4$ for four deterministic differentiable functions $P_1(\cdot),P_2(\cdot),P_3(\cdot),\varphi^{u_0}(\cdot)$. Applying It\^o's formula to (\ref{follower decouple form}), we achieve
	\begin{equation}\label{32}
		\begin{aligned}
			d\hat{p}_i
			=&-\left\{\left(A^\top-S^\top R^{-1}B^\top\right) \hat{p}_i+\left(\bar{A}^\top+\bar{\Gamma}_2^\top S^\top R^{-1}B^\top\right)\mathbb{E}p_i +\left(Q-S^\top R^{-1}S\right)\hat{\bar{x}}_i^* \right.\\
			&\quad +\left[\tilde{\Gamma}_2+
			S^\top R^{-1}S\bar{\Gamma}_2+\bar{\Gamma}_2^\top S^\top R^{-1}S\left(I_n-\bar{\Gamma}_2\right)+\left(\bar{\Gamma}_2^\top-I_n\right)Q\Gamma_2\right] \mathbb{E}\bar{x}_i^{u_0,u_i^*}  \\
			&\quad \left.+\left(\tilde{\Gamma}_3-Q\Gamma_3\right) \mathbb{E}\bar{x}_0^{u_0}+\left(\bar{\Gamma}_2^\top-I_n\right)Q\eta_2\right\} d t+\tilde{q}_i d \widetilde{W}_i ,
		\end{aligned}
	\end{equation}
	By comparing the coefficients of the diffusion terms, we get for $i=1, \cdots, N$,
	$$
	\tilde{q}_i=P_1\left[\bar{\sigma}+\Pi H^\top (f^{\top})^{-1}\right].
	$$
	By comparing the coefficients of the drift terms, we get for $i=1, \cdots, N$,
	$$
	\begin{aligned}
		&\left[\dot{P}_1+P_1A+A^\top P_1-\left(P_1B+S^\top\right)R^{-1}\left(B^\top P_1+S\right)+Q\right]\left(\hat{\bar{x}}_i^{u_0,u_i^*}-\mathbb{E}\bar{x}_i^{u_0,u_i^*}\right)\\
		&+\left[\dot{P}_2+P_2\left(A+\bar{A}+C\right)+\left(A+\bar{A}\right)^\top P_2 +P_3C_0+Q+\tilde{\Gamma}_2+\left(\bar{\Gamma}_2^\top-I_n\right)Q\Gamma_2 \right.\\
		&\left.-\left(P_2B+\left(I_n-\bar{\Gamma}_2^\top\right)S^\top\right)R^{-1}\left(B^\top P_2+S\left(I_n-\bar{\Gamma}_2\right)\right)\right]\mathbb{E}\bar{x}_i^{u_0,u_i^*}+\left[\dot{P}_3+P_3\left(A_0+\bar{A}_0\right)\right.\\
		&\left.+\left(A+\bar{A}-BR^{-1}S\left(I_n-\bar{\Gamma}_2\right)\right)^\top P_3 -P_2BR^{-1}B^\top P_3+P_2F+\tilde{\Gamma}_3-Q\Gamma_3\right]\mathbb{E}\bar{x}_0^{u_0}\\
		&+\dot{\varphi}^{u_0}+\left(A^\top+\bar{A}^\top-P_2BR^{-1}B^\top-\left(I_n-\bar{\Gamma}_2^\top\right)S^\top R^{-1}B^\top\right)\varphi^{u_0}+P_3B_0\mathbb{E}u_0+P_2b\\
		&+P_3b_0+\left(\bar{\Gamma}_2^\top-I_n\right)Q\eta_2=0.
	\end{aligned}
	$$
	
	Thus, we introduce the Riccati equations as follows:
	\begin{equation}\label{RE P1}
		\left\{\begin{aligned}
			&\dot{P}_1+P_1A+A^\top P_1-\left(P_1B+S^\top\right)R^{-1}\left(B^\top P_1+S\right)+Q=0, \\
			&P_1(T)=G\,,
		\end{aligned}\right.
	\end{equation}
	\begin{equation}\label{RE P2}
		\left\{\begin{aligned}
			\dot{P}_2&+P_2\left(A+\bar{A}+C-BR^{-1}S\left(I_n-\bar{\Gamma}_2\right)\right)+\left(A+\bar{A}-BR^{-1}S\left(I_n-\bar{\Gamma}_2\right)\right)^\top P_2+P_3C_0\\
			&-P_2BR^{-1}B^\top P_2+Q+\tilde{\Gamma}_2+\left(\bar{\Gamma}_2^\top-I_n\right)Q\Gamma_2-\left(I_n-\bar{\Gamma}_2^\top\right)S^\top R^{-1}S\left(I_n-\bar{\Gamma}_2\right)=0,\\
			P_2&(T)=G+\tilde{\Gamma}_4+\left(\bar{\Gamma}_4^\top-I_n\right)G\Gamma_4\,,
		\end{aligned}\right.
	\end{equation}
	\begin{equation}\label{RE P3}
		\left\{\begin{aligned}
			\dot{P}_3&+P_3\left(A_0+\bar{A}_0\right)+\left(A+\bar{A}-BR^{-1}S\left(I_n-\bar{\Gamma}_2\right)\right)^\top P_3 -P_2BR^{-1}B^\top P_3+P_2F\\
			&+\tilde{\Gamma}_3-Q\Gamma_3=0, \\
			 P_3&(T)=\tilde{\Gamma}_5-G\Gamma_5\,,
		\end{aligned}\right.
	\end{equation}
	and the equation of $\varphi^{u_0}(\cdot)\in L^2(0,T;\mathbb{R}^n)$ as
	\begin{equation}\label{varphi equation}
		\left\{\begin{aligned}
			\dot{\varphi}^{u_0}&+\left(A^\top+\bar{A}^\top-P_2BR^{-1}B^\top-\left(I_n-\bar{\Gamma}_2^\top\right)S^\top R^{-1}B^\top\right)\varphi^{u_0}+P_3B_0\mathbb{E}u_0+P_2b\\
			&+P_3b_0+\left(\bar{\Gamma}_2^\top-I_n\right)Q\eta_2=0,\\
			\varphi^{u_0}&(T)= \left(\bar{\Gamma}_4^\top-I_n\right)G\eta_4\,.
		\end{aligned}\right.
	\end{equation}
	
	\begin{myremark}
		(i) The equation \eqref{RE P1} is a standard Riccati equation, we can deduce that there exists a unique solution for it by the Theorem 7.2, Chapter 6 of Yong and Zhou \cite{Yong-Zhou-1999}.
		
		(ii) Noting the equations \eqref{RE P2} and \eqref{RE P3} are fully coupled {\it ordinary differential equations} (ODEs), the solvability of this equation is not immediately obvious and has not been studied in existing literature. We have given the solvability of the coupled Riccati equations under certain conditions in the following Proposition \ref{RE solvability}.
		
		(iii) If the equations \eqref{RE P1}, \eqref{RE P2} and \eqref{RE P3} admit solutions, then the equation \eqref{varphi equation} admits unique solution by the classical theory of linear ODE.
	\end{myremark}
	
	Next, we will give conditions under which equations (\ref{RE P2}) and (\ref{RE P3}) are solvable. Let $P:=[P_2, P_3]$, then the equations (\ref{RE P2}) and (\ref{RE P3}) can be rewritten as follows:
	\begin{equation}
		\left\{\begin{aligned}
			\dot{P}&+PH_1+H_2 P -PI_1 P+Q_1=0,\\
			P&(T)=G_1\,,
		\end{aligned}\right.
	\end{equation}
	where 
	\begin{equation*}
    \begin{aligned}
	H_1&:=\left(\begin{array}{cc}
		A+\bar{A}-BR^{-1}S\left(I_n-\bar{\Gamma}_2\right) & F  \\
		C_0 & A_0+\bar{A}_0
	\end{array}\right),\quad H_2:=\left(A+\bar{A}-BR^{-1}S\left(I_n-\bar{\Gamma}_2\right)\right)^\top, \\ 
	Q_1&:=\left(\begin{array}{cc}
		Q+\tilde{\Gamma}_2+\left(\bar{\Gamma}_2^\top-I_n\right)Q\Gamma_2-\left(I_n-\bar{\Gamma}_2^\top\right)S^\top R^{-1}S\left(I_n-\bar{\Gamma}_2\right)  &
		\tilde{\Gamma}_3-Q\Gamma_3
	\end{array}\right),\\
	I_1&:=\left(\begin{array}{c}
		BR^{-1}B^\top \\  
		0
	\end{array}\right),\quad G_1:=\left(\begin{array}{cc}
		G+\tilde{\Gamma}_4+\left(\bar{\Gamma}_4^\top-I_n\right)G\Gamma_4 & \tilde{\Gamma}_5-G\Gamma_5
	\end{array}\right).
	\end{aligned}
    \end{equation*}
	Then, we introduce the following linear equation
	$
	\begin{pmatrix}
		\dot{u} \\
		\dot{v}
	\end{pmatrix}
	=H\begin{pmatrix}
		u \\
		v
	\end{pmatrix}dt,
	\begin{pmatrix}
		u \\
		v
	\end{pmatrix}(T)=\begin{pmatrix}
		I_n \\
		I_n
	\end{pmatrix},
	$
	where $H:=\begin{pmatrix}
		H_1 & -I_1  \\
		-Q_1 & -H_2
	\end{pmatrix}$. Let $\Theta(\cdot)$ be the fundamental solution matrix of above linear ODE, then we have the following result by the result of Page 11 in Reid \cite{Reid-1972}.
	\begin{mypro}\label{RE solvability}
		If $
		\begin{pmatrix}
			I_n & 0
		\end{pmatrix} \Theta(t) \Theta^{-1}(T)\begin{pmatrix}
			I_n \\
			G_1
		\end{pmatrix}
		$ is non-singular, then the equations (\ref{RE P2}) and (\ref{RE P3}) have a solution in $C([0, T ], \mathbb{R}^{n\times n})$.
	\end{mypro}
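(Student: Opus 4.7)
The approach is the classical Radon linearization of matrix Riccati equations (as in Reid's monograph cited in the excerpt), extended to the rectangular, non-symmetric block form induced by $P = [P_2, P_3]$. The plan is to construct a candidate $P$ from the flow of the linear Hamiltonian-type system and then to verify, by direct differentiation, that the resulting matrix solves the coupled Riccati system \eqref{RE P2}--\eqref{RE P3}; invertibility of the top block of the flow is exactly the hypothesis of the proposition.

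First, I would introduce matrix-valued auxiliary unknowns $U(\cdot)$ and $V(\cdot)$ of the appropriate block sizes (so that $P(t) = V(t) U(t)^{-1}$ has size $n \times 2n$) and let them solve the linear terminal-value problem
$$\frac{d}{dt}\begin{pmatrix} U \\ V \end{pmatrix} = H \begin{pmatrix} U \\ V \end{pmatrix}, \qquad \begin{pmatrix} U(T) \\ V(T) \end{pmatrix} = \begin{pmatrix} I \\ G_1 \end{pmatrix},$$
that is, $\dot{U} = H_1 U - I_1 V$ and $\dot{V} = -Q_1 U - H_2 V$. Using the fundamental solution matrix $\Theta(\cdot)$ from the statement, one then has the explicit representation
$$\begin{pmatrix} U(t) \\ V(t) \end{pmatrix} = \Theta(t)\,\Theta^{-1}(T)\begin{pmatrix} I \\ G_1 \end{pmatrix},$$
so that $U(t)$ coincides with the matrix appearing in the hypothesis. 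The non-singularity assumption therefore guarantees that $U(t)^{-1}$ exists and is continuous for every $t \in [0, T]$.

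Next, I would define $P(t) := V(t) U(t)^{-1}$, which is continuous on $[0,T]$, and compute, using $\tfrac{d}{dt}(U^{-1}) = -U^{-1}\dot U\, U^{-1}$,
\begin{align*}
\dot{P} &= \dot V\, U^{-1} - V U^{-1} \dot U\, U^{-1} = (-Q_1 U - H_2 V)U^{-1} - V U^{-1}(H_1 U - I_1 V)U^{-1}\\
&= -Q_1 - H_2 P - P H_1 + P I_1 P,
\end{align*}
which is exactly $\dot{P} + P H_1 + H_2 P - P I_1 P + Q_1 = 0$, with terminal condition $P(T) = G_1\, I^{-1} = G_1$. Splitting via $P_2(t) := P(t)\begin{pmatrix} I_n \\ 0 \end{pmatrix}$ and $P_3(t) := P(t)\begin{pmatrix} 0 \\ I_n \end{pmatrix}$ then recovers a $C([0,T];\mathbb{R}^{n\times n})$-solution of the original coupled system \eqref{RE P2}--\eqref{RE P3}.

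The main obstacle to this argument is the one already identified and resolved by the hypothesis: global invertibility of the block $U(t)$ on the whole interval $[0,T]$, which is what prevents blow-up of the Riccati flow. Everything else is linear-ODE bookkeeping; the only point demanding care is dimensional compatibility between the rectangular unknown $P$ and the block matrix $H$, since $[P_2,P_3]$ together form a non-square object and $H$ itself is non-symmetric.
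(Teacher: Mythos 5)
Your proof is correct and is exactly the Radon linearization argument behind the result the paper invokes from Reid (the paper gives no proof of its own, only the citation), so the approaches coincide: the hypothesis is precisely global invertibility of the top block $U(t)=\begin{pmatrix} I & 0\end{pmatrix}\Theta(t)\Theta^{-1}(T)\begin{pmatrix} I \\ G_1\end{pmatrix}$, and your differentiation of $P=VU^{-1}$ recovers $\dot P+PH_1+H_2P-PI_1P+Q_1=0$ with $P(T)=G_1$. The only caveat, which you already flag, is the block-size bookkeeping ($U$ is $2n\times 2n$, $V$ is $n\times 2n$, so the identity blocks in the paper's statement should be read with the appropriate dimensions).
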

	
	Then, the state feedback representation of the decentralized optimal strategies of the followers can be obtained in the following theorem.
	\begin{mythm}\label{theorem feedback of follower}
		Let Assumption \ref{A1} and the assumption of Proposition \ref{RE solvability} hold, for given $u_0(\cdot) \in \mathcal{U}_{0}^{d}$, the state feedback optimal strategies of the followers $\mathcal{A}_i,i=1,\cdots,N$, can be represented as
		\begin{equation}\label{follower feedback strategy}
			\begin{aligned}
				u_i^*=&-R^{-1}\Big[(B^\top P_1+S)\hat{\bar{x}}_i^{u_0,u_i^*}+(B^\top\left(P_2-P_1\right)-S\bar{\Gamma}_2)\mathbb{E}\bar{x}_i^{u_0,u_i^*}+B^\top P_3\mathbb{E}\bar{x}_0^{u_0} +B^\top \varphi^{u_0} \big],
			\end{aligned}
		\end{equation}
		where $\left(\hat{\bar{x}}_i^{u_0,u_i^*}(\cdot),\mathbb{E}\bar{x}_i^{u_0,u_i^*}(\cdot),\mathbb{E}\bar{x}_0^{u_0}(\cdot)\right)\in L^2_{\mathcal{F}_t^{\bar{Y}^{u_0,u_i^*}_i}}(0,T;\mathbb{R})\times L^2(0,T;\mathbb{R}^n)\times L^2(0,T;\mathbb{R}^n)$ satisfies
		\begin{equation}\label{hat bar x_i}
			\left\{\begin{aligned}
				d \hat{\bar{x}}^{u_0,u_i^*}_i &= \left\{\left(A-BR^{-1}(B^\top P_1+S)\right)\hat{\bar{x}}^{u_0,u_i^*}_i+\left(F-BR^{-1}B^\top P_3\right) \mathbb{E}\bar{x}_0^{u_0} \right.\\
				&\qquad -BR^{-1}B^\top\varphi^{u_0}+\big[\bar{A}+C-BR^{-1}\big(B^\top\left(P_2-P_1\right)-S\bar{\Gamma}_2\big)\big]\mathbb{E}\bar{x}_i^{u_0,u_i^*}\\
                &\qquad +b\Big\} d t +\left(\bar{\sigma}+\Pi H^\top (f^{\top})^{-1}\right) d \widetilde{W}_i, \\
				\hat{\bar{x}}_i^{u_0,u_i^*} (0)&= \xi,
			\end{aligned}\right.
		\end{equation}
		\begin{equation}\label{E bar x_i}
			\left\{\begin{aligned}
				d \mathbb{E}\bar{x}_i^{u_0,u_i^*}  &= \left\{\left(A+\bar{A}+C-BR^{-1}B^\top P_2-BR^{-1}S\left(I_n-\bar{\Gamma}_2\right)\right)\mathbb{E}\bar{x}_i^{u_0,u_i^*} \right.\\
				&\quad \left.+\left(F-BR^{-1}B^\top P_3\right) \mathbb{E}\bar{x}_0^{u_0}-BR^{-1}B^\top\varphi^{u_0}+b\right\} d t, \\
				\mathbb{E}\bar{x}_i^{u_0,u_i^*}(0)& = \xi,
			\end{aligned}\right.
		\end{equation}
		\begin{equation}\label{E x_0}
			\left\{\begin{aligned}
				d \mathbb{E}\bar{x}_0^{u_0}  &= \left[\left(A_0+\bar{A}_0\right)\mathbb{E}\bar{x}_0^{u_0}+B_0\mathbb{E}u_0+C_0\mathbb{E}\bar{x}_i^{u_0,u_i^*}+b_0\right]dt \\
				\mathbb{E}\bar{x}_0^{u_0} (0) &= \xi_0,
			\end{aligned}\right.
		\end{equation}
		and $\mathbb{R}^n$-valued function $\varphi^{u_0}(\cdot)$ satisfies (\ref{varphi equation}).
	\end{mythm}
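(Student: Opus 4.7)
The plan is to take the decoupling ansatz (\ref{follower decouple form}), which expresses the filtered adjoint $\hat{p}_i$ as an affine combination of $\hat{\bar{x}}_i^{u_0,u_i^*}-\mathbb{E}\bar{x}_i^{u_0,u_i^*}$, $\mathbb{E}\bar{x}_i^{u_0,u_i^*}$ and $\mathbb{E}\bar{x}_0^{u_0}$ (plus the inhomogeneous term $\varphi^{u_0}$), and show that the coefficient functions $P_1,P_2,P_3,\varphi^{u_0}$ governed by the Riccati/ODE system (\ref{RE P1})--(\ref{varphi equation}) realize the desired decoupling along the CC system (\ref{CC system}). Once this is verified, substituting the ansatz into the open-loop formula (\ref{follower open loop optimal control}) yields the feedback representation (\ref{follower feedback strategy}), and the three SDEs (\ref{hat bar x_i})--(\ref{E x_0}) follow by inserting the feedback form into the filtering equation from Proposition~3.1 and taking expectations.

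More concretely, I would first substitute (\ref{follower decouple form}) into (\ref{follower open loop optimal control}) and collect like terms: the coefficient of $\hat{\bar{x}}_i^{u_0,u_i^*}$ becomes $-R^{-1}(B^\top P_1+S)$, the coefficient of $\mathbb{E}\bar{x}_i^{u_0,u_i^*}$ collapses to $-R^{-1}(B^\top(P_2-P_1)-S\bar{\Gamma}_2)$, while the coefficients of $\mathbb{E}\bar{x}_0^{u_0}$ and the affine part come out to $-R^{-1}B^\top P_3$ and $-R^{-1}B^\top\varphi^{u_0}$, reproducing (\ref{follower feedback strategy}). Next, I would plug the feedback into the second equation of (\ref{filtering of CC system}), replace $-BR^{-1}B^\top\hat{p}_i$ via the ansatz, and regroup terms to obtain (\ref{hat bar x_i}). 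Taking expectations and using $\mathbb{E}\hat{\bar{x}}_i^{u_0,u_i^*}=\mathbb{E}\bar{x}_i^{u_0,u_i^*}$ gives (\ref{E bar x_i}); the first equation of (\ref{filtering of CC system}) is exactly (\ref{E x_0}).

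It then remains to confirm that the ansatz is consistent with the backward dynamics of $\hat{p}_i$. The idea is to apply It\^o's formula to the right-hand side of (\ref{follower decouple form}) using the just-derived forward dynamics (\ref{hat bar x_i})--(\ref{E x_0}), read off the resulting diffusion to identify $\tilde q_i=P_1(\bar\sigma+\Pi H^\top(f^\top)^{-1})$, and equate the drift with that in the third equation of (\ref{filtering of CC system}), noting that $\mathbb{E}p_i=P_2\mathbb{E}\bar{x}_i^{u_0,u_i^*}+P_3\mathbb{E}\bar{x}_0^{u_0}+\varphi^{u_0}$ by taking expectation in the ansatz. Matching the coefficients of $\hat{\bar{x}}_i^{u_0,u_i^*}-\mathbb{E}\bar{x}_i^{u_0,u_i^*}$, $\mathbb{E}\bar{x}_i^{u_0,u_i^*}$, $\mathbb{E}\bar{x}_0^{u_0}$ and the pure time-dependent term separately, together with the terminal value of $\hat{p}_i$ in (\ref{filtering of CC system}), produces exactly the four equations (\ref{RE P1})--(\ref{varphi equation}). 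This matching is essentially the computation already carried out in equation (3.20) and the display following it, so the verification is a bookkeeping exercise.

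The main obstacle is the well-posedness of the coupled system: (\ref{RE P1}) is a standard symmetric Riccati equation handled by Yong--Zhou, but (\ref{RE P2}) and (\ref{RE P3}) are fully coupled, non-symmetric matrix ODEs whose global existence is not automatic; this is precisely the issue resolved by the non-singularity hypothesis of Proposition \ref{RE solvability}, under which $P=[P_2,P_3]$ exists on $[0,T]$. Given $P_1,P_2,P_3$, the ODE (\ref{varphi equation}) for $\varphi^{u_0}$ has bounded coefficients and a classical linear solution. With the Riccati system solvable, the coefficient matching above closes the argument and certifies that the strategy (\ref{follower feedback strategy}), with $(\hat{\bar{x}}_i^{u_0,u_i^*},\mathbb{E}\bar{x}_i^{u_0,u_i^*},\mathbb{E}\bar{x}_0^{u_0})$ driven by (\ref{hat bar x_i})--(\ref{E x_0}), is the state feedback decentralized optimal strategy for the $i$-th follower.
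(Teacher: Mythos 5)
Your proposal is correct and follows essentially the same route as the paper: the ansatz (\ref{follower decouple form}) is substituted into the open-loop formula (\ref{follower open loop optimal control}) to get (\ref{follower feedback strategy}), the coefficient-matching that produces (\ref{RE P1})--(\ref{varphi equation}) is exactly the computation the paper carries out in the display preceding the theorem, and well-posedness of the coupled forward ODEs (\ref{E bar x_i})--(\ref{E x_0}) is handled (as in the paper's proof) by stacking them into a single linear ODE, with (\ref{hat bar x_i}) then solvable as a linear SDE. The only cosmetic difference is that the paper isolates the It\^o/coefficient-comparison step outside the formal proof and invokes Yong's dimension-expansion explicitly, whereas you fold both into the argument.
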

	
	\begin{proof}
		Noticing that (\ref{E bar x_i}) and (\ref{E x_0}) are coupled ODEs, we apply dimension expansion technique (see Yong \cite{Yong-2002}).
		Let $X:=\left(\mathbb{E}\bar{x}_0^{{u_0}\top},\mathbb{E}\bar{x}_i^{{u_0,u_i^*}\top}\right)^\top$,  $b_1:=\left(\begin{array}{c}
			B_0\mathbb{E}u_0+b_0  \\
			-BR^{-1}B^\top \varphi^{u_0}+b
		\end{array}\right), $ $\xi_1:=\left(\xi_0^\top,\xi^\top\right)^\top,$ 
		$
		A_1:=\left(\begin{array}{cc}
			A_0+\bar{A}_0 & C_0  \\
			F-BR^{-1}B^\top P_3 & A+\bar{A}+C-BR^{-1}B^\top P_2-BR^{-1}S\left(I_n-\bar{\Gamma}_2\right)
		\end{array}\right),
		$ we have $d X  = \left[A_1X+b_1\right]dt$, $X (0)= \xi_1.$
		Then, the equation that $X$ satisfies admits unique solution by using classical linear ODE theory and equation (\ref{hat bar x_i}) admits unique solution  by Proposition 2.6 of Yong \cite{Yong-2013}. The feedback form (\ref{follower feedback strategy}) of optimal strategy can be obtained from (\ref{follower open loop optimal control}) and (\ref{follower decouple form}). The proof is complete.
	\end{proof}
	
	To conclude this subsection, we discuss the well-posedness of the CC system (\ref{CC system}). Similar as Li et al. {\cite{Li-Nie-Wu-2023}}, we employ the Riccati equation method.
	
	Firstly, we consider the equivalence between the solvability of (\ref{CC system}) and the solvability of (\ref{filtering of CC system}). On the one hand, if (\ref{CC system}) admit a solution $(\bar{x}_0^{u_0},\bar{x}_i^{u_0,u_i^*},p_i)$, then taking filtering to $(\bar{x}_0^{u_0},\bar{x}_i^{u_0,u_i^*},p_i)$ with respect to $\mathcal{F}^{\bar{Y}^{u_0,u_i^*}_i}$ will yield a solution $(\mathbb{E}\bar{x}_0^{u_0},\hat{\bar{x}}_i^{u_0,u_i^*},\hat{p}_i)$ of (\ref{filtering of CC system}). On the other hand, if (\ref{filtering of CC system}) admit a solution $(\mathbb{E}\bar{x}_0^{u_0},\hat{\bar{x}}_i^{u_0,u_i^*},\hat{p}_i)$, then we can get a solution $(\bar{x}_0^{u_0},\bar{x}_i^{u_0,u_i^*})$ of the first and second equation of (\ref{CC system}) by Proposition 2.6 of Yong \cite{Yong-2013}. And then, the third equation of (\ref{CC system}) admit a solution by Theorem 2.1 of Li et al. \cite{Li-Sun-Xiong-2019}. 
	
	Under the assumptions of Theorem \ref{theorem feedback of follower}, the equations (\ref{hat bar x_i}) and (\ref{E x_0})  admit the solutions. Then, the third equation of (\ref{filtering of CC system}) admit a solution by Theorem 2.1 of \cite{Li-Sun-Xiong-2019}. In other word, under the assumptions of Theorem \ref{theorem feedback of follower}, (\ref{filtering of CC system}) admit a solution. Equivalently, we have (\ref{CC system}) admit a solution.
	
	\subsection{Open loop decentralized strategy of the leader}

	Noting that the followers take their optimal strategies $u^*(\cdot;u_0)\equiv\left(u_1^*(\cdot;u_0), \ldots, u_N^*(\cdot;u_0)\right)$, the decentralized state equation of the leader $\mathcal{A}_0$ becomes
	\begin{equation}\label{leader state decentralized new }
		\left\{\begin{aligned}
			d \bar{x}_0^{u_0} &= \left(A_0 \bar{x}_0^{u_0}+B_0 u_0+\bar{A}_0\mathbb{E}\bar{x}_0+C_0 \mathbb{E}\bar{x}_i^{u_0,u_i^*}+b_0\right) d t +\sigma_0 d W_0+\bar{\sigma}_0 d \overline{W}_0, \\
			d \mathbb{E}\bar{x}_i^{u_0,u_i^*}&= \left[\left(A+\bar{A}+C-BR^{-1}B^\top P_2-BR^{-1}S\left(I_n-\bar{\Gamma}_2\right)\right)\mathbb{E}\bar{x}_i^{u_0,u_i^*} \right.\\
			&\quad \left.+\left(F-BR^{-1}B^\top P_3\right) \mathbb{E}\bar{x}_0^{u_0}-BR^{-1}B^\top\varphi^{u_0}+b\right] d t,\\
			d\varphi^{u_0}&=-\left[\left(A+\bar{A}-BR^{-1}B P_2-B R^{-1}S\left(I_n-\bar{\Gamma}_2\right)\right)^\top\varphi^{u_0}+P_3B_0\mathbb{E}u_0\right.\\
			&\quad \left.+P_2b+P_3b_0+\left(\bar{\Gamma}_2^\top-I_n\right)Q\eta_2\right]dt,\\
			\bar{x}^{u_0}_0(0)&= \xi_0, \quad \mathbb{E}\bar{x}_i^{u_0,u_i^*}(0)=\xi,\quad \varphi^{u_0}(T)=\left(\bar{\Gamma}_4^\top-I_n\right)G\eta_4\,.
		\end{aligned}\right.
	\end{equation}
	The corresponding observation process satisfies
	\begin{equation}\label{leader limiting observation 1}
		\left\{\begin{aligned}
			d \bar{Y}_0^{u_0}  &=\left[H_0 \bar{x}_0^{u_0}+\bar{H}_0 \mathbb{E}\bar{x}_0^{u_0}+I_0 \mathbb{E}\bar{x}_i^{u_0,u_i^*}+h_0\right] d t +f_0 d \overline{W}_0 , \\
			\bar{Y}^{u_0}_0(0) &=0,
		\end{aligned}\right.
	\end{equation}
	and the decentralized cost functional is
	\begin{equation}\label{leader decentralized cost functional new}
		\begin{aligned}
			J_0&\left(u_0(\cdot)\right)=\frac{1}{2} \mathbb{E} \bigg\{\int_0^T \left(\|\bar{x}_0^{u_0}-\Gamma_0  \mathbb{E}\bar{x}_i^{u_0,u_i^*}-\bar{\Gamma}_0  \mathbb{E}\bar{x}_0^{u_0}-\eta_0\|_{Q_0}^2 +\|u_0\|^2_{R_0}+2\left\langle S_0(\bar{x}_0^{u_0}\right.\right.\\
			&\left.\left.-\Gamma_0\mathbb{E}\bar{x}_i^{u_0,u_i^*}-\bar{\Gamma}_0\mathbb{E}\bar{x}_0^{u_0}),u_0\right\rangle \right) dt+ \|\bar{x}_0^{u_0}(T)-\Gamma_1 \mathbb{E}\bar{x}_i^{u_0,u_i^*}(T)-\bar{\Gamma}_1  \mathbb{E}\bar{x}_0^{u_0}(T)-\eta_1\|_{G_0}^2 \bigg\}\,.
		\end{aligned}
	\end{equation}
	
	By applying the standard variational method and dual technique, we can obtain the open-loop decentralized optimal strategies of the leader $\mathcal{A}_0$ for sub-problem (ii) of \textbf{Problem \ref{problem decentralized}}.
	
	\begin{mythm}\label{open loop thm1 of leader}
		Let Assumption \ref{A1} and the assumption of Proposition \ref{RE solvability} hold. Then the open-loop decentralized strategy of the leader $\mathcal{A}_0$ is given by
		\begin{equation}\label{leader open loop optimal strategy}
			u_0^*=-R_0^{-1}B_0^\top \mathbb{E}\left[\left. y_0\right\rvert \mathcal{F}^{\bar{Y}^{u_0^*}}_t\right]+R_0^{-1}B_0^\top P_3^\top \gamma-R_0^{-1}S_0\left(\mathbb{E}\left[\left. \bar{x}_0^{u_0^*}\right\rvert \mathcal{F}^{\bar{Y}^{u_0^*}_0}_t\right]-\Gamma_0\mathbb{E}\bar{x}_i^{u_0^*,u_i^*}-\bar{\Gamma}_0\mathbb{E}\bar{x}_0^{u_0^*}\right),
		\end{equation}
		where $\left(\bar{x}_0^{u_0^*}(\cdot),\mathbb{E}\bar{x}_i^{u_0^*,u_i^*}(\cdot),\varphi^{u_0^*}(\cdot),y_0(\cdot),z_0(\cdot),\bar{z}_0(\cdot),y(\cdot),\gamma(\cdot)\right)$ satisfies the following stochastic Hamiltonian system:
		\begin{equation}\label{leader Hamiltonian system}
			\left\{\begin{aligned}
				d \bar{x}^{u_0^*}_0= & \left(A_0 \bar{x}^{u_0^*}_0+B_0 u^*_0+\bar{A}_0\mathbb{E}\bar{x}^{u_0^*}_0+C_0 \mathbb{E}\bar{x}_i^{u_0^*,u_i^*}+b_0\right) d t +\sigma_0 d W_0+\bar{\sigma}_0 d \overline{W}_0, \\
				d \mathbb{E}\bar{x}_i^{u_0^*,u_i^*}=& \left[\left(A+\bar{A}+C-BR^{-1}B^\top P_2-BR^{-1}S\left(I_n-\bar{\Gamma}_2\right)\right)\mathbb{E}\bar{x}_i^{u_0^*,u_i^*} \right.\\
				&\quad\left.+\left(F-BR^{-1}B^\top P_3\right) \mathbb{E}\bar{x}_0^{u_0^*}-BR^{-1}B^\top\varphi^{u_0^*}+b\right] d t,\\
				d\varphi^{u_0^*}=&-\left[\left(A+\bar{A}-BR^{-1}B P_2-B R^{-1}S\left(I_n-\bar{\Gamma}_2\right)\right)^\top\varphi^{u_0^*}+P_3B_0\mathbb{E}u_0^*+P_2b\right.\\
				&\quad\left.+P_3b_0+\left(\bar{\Gamma}_2^\top-I_n\right)Q\eta_2\right]dt,\\
				dy_0=&-\left\{A_0^\top y_0+\bar{A}_0^\top\mathbb{E}y_0+\left(F^\top-P_3^\top BR^{-1}B^\top\right)y+Q_0\bar{x}_0^{u_0^*}+\tilde{\Gamma}_0\mathbb{E}\bar{x}_0^{u_0^*} \right.\\
				&\quad\left.+\left(\bar{\Gamma}_0^\top-I_n\right)Q_0\Gamma_0\mathbb{E}\bar{x}_i^{u_0^*,u_i^*}+S_0^\top u^*_0-\bar{\Gamma}_0^\top S_0^\top\mathbb{E}u^*_0+\left(\bar{\Gamma}_0^\top-I_n\right)Q_0\eta_0\right\}dt\\
				&\quad +z_0dW_0+\bar{z}_0d\overline{W}_0,\\
				dy=&-\left\{\left(A+\bar{A}+C-BR^{-1}B P_2-BR^{-1}S\left(I_n-\bar{\Gamma}_2\right)\right)^\top y-\Gamma_0^\top S_0\top\mathbb{E}u_0^* \right.\\
				&\quad\left.+ C_0^\top \mathbb{E}y_0+\Gamma_0^\top Q_0\left(\bar{\Gamma}_0-I_n\right)\mathbb{E}\bar{x}_0^{u_0^*}+\Gamma_0^\top Q\Gamma_0\mathbb{E}\bar{x}_i^{u_0^*,u_i^*}+\Gamma_0^\top Q_0\eta_0\right\}dt,\\
				d\gamma=&\left\{\left(A+\bar{A}-BR^{-1}B P_2-B R^{-1}S\left(I_n-\bar{\Gamma}_2\right)\right)\gamma+BR^{-1}B^\top y\right\}dt,\\
				\bar{x}^{u_0^*}_0(0) =& \xi_0, \quad \mathbb{E}\bar{x}_i^{u_0^*,u_i^*}(0)=\xi,\quad \varphi^{u_0^*}(T)=\left(\bar{\Gamma}_4^\top-I_n\right)G\eta_4,\quad \gamma(0)=0,\\
				y_0(T)=&G_0\bar{x}^{u_0^*}_0(T)+\tilde{\Gamma}_1\mathbb{E}\bar{x}_0^{u_0^*}(T)+\left(\bar{\Gamma}_1-I_n\right)G_0\Gamma_1\mathbb{E}\bar{x}_i^{u_0^*,u_i^*}(T)+\left(\bar{\Gamma}_1-I_n\right)G_0\eta_1,\\
				y(T)=&\Gamma_1^\top G_0\left(\bar{\Gamma}_1-I_n\right)\mathbb{E}\bar{x}_0^{u_0^*}(T)+\Gamma_1^\top G_0\Gamma_1 \mathbb{E}\bar{x}_i^{u_0^*,u_i^*}(T)+\Gamma_1^\top G_0\eta_1,
			\end{aligned}\right.
		\end{equation}
		where
		$$
		\begin{aligned}
			\tilde{\Gamma}_0:=\bar{\Gamma}_0^\top Q_0\bar{\Gamma}_0-Q_0\bar{\Gamma}_0-\bar{\Gamma}_0^\top Q_0, \quad \tilde{\Gamma}_1:=\bar{\Gamma}_1^\top G_0\bar{\Gamma}_1-G_0\bar{\Gamma}_1-\bar{\Gamma}_1^\top G_0,
		\end{aligned}
		$$
		and $\bar{Y}^{u_0^*}_0(\cdot)$ satisfies
		\begin{equation*}
			\left\{\begin{aligned}
				d \bar{Y}^{u_0^*}_0  &=\left[H_0 \bar{x}^{u_0^*}_0+\bar{H}_0 \mathbb{E}\bar{x}^{u_0^*}_0+I_0 \mathbb{E}\bar{x}_i^{u_0^*,u_i^*}+h_0\right] d t +f_0 d \overline{W}_0 , \\
				\bar{Y}^{u_0^*}_0(0) &=0.
			\end{aligned}\right.
		\end{equation*}
	\end{mythm}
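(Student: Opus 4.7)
The plan is to mimic the variational and duality argument used for Theorem \ref{open loop thm1 of follower}, now applied to the coupled forward SDE–forward ODE–backward ODE leader system \eqref{leader state decentralized new }--\eqref{leader decentralized cost functional new}, and then project the stationarity condition onto the observation filtration using Lemma \ref{leader lemma1}. First I would perturb the candidate $u_0^*$ by $u_0^\varepsilon = u_0^* + \varepsilon v$ with $v \in \mathcal{U}_0^d$ and derive the G\^{a}teaux variations $(\delta x_0,\delta X,\delta\varphi)$ of $(\bar{x}_0^{u_0},\mathbb{E}\bar{x}_i^{u_0,u_i^*},\varphi^{u_0})$. They satisfy a linear system in which the SDE for $\delta x_0$ is forced by $B_0 v$ (with zero diffusion, since $\sigma_0,\bar\sigma_0$ do not depend on $u_0$), the ODE for $\delta X$ is driven by $\mathbb{E}\delta x_0$ and $\delta\varphi$, and the backward ODE for $\delta\varphi$ is forced by $P_3 B_0 \mathbb{E}v$, with homogeneous data $\delta x_0(0)=0$, $\delta X(0)=0$, $\delta\varphi(T)=0$. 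Differentiating \eqref{leader decentralized cost functional new} at $\varepsilon=0$ produces a linear functional of $(\delta x_0,\delta X,\mathbb{E}\delta x_0,\delta\varphi,v)$ whose vanishing on admissible $v$ is the necessary condition for optimality.

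Next I would introduce three adjoint processes calibrated to absorb each state variation. For $\bar{x}_0^{u_0^*}$, a BSDE adjoint $(y_0,z_0,\bar{z}_0)$ driven by $W_0,\overline{W}_0$; for the deterministic $\mathbb{E}\bar{x}_i^{u_0^*,u_i^*}$, a backward deterministic ODE $y$; and for the backward ODE $\varphi^{u_0^*}$, a \emph{forward} ODE $\gamma$ with initial datum $\gamma(0)=0$. The choice $\gamma(0)=0$ is forced by $\delta\varphi(T)=0$: it is precisely the boundary condition that kills the boundary term when $\gamma\cdot\delta\varphi$ is integrated by parts. Applying It\^o's formula to $\langle y_0,\delta x_0\rangle$, integrating by parts on $\langle y,\delta X\rangle$ and $\langle\gamma,\delta\varphi\rangle$, summing and matching coefficients so that every term in $\frac{d}{d\varepsilon}J_0(u_0^\varepsilon)|_{\varepsilon=0}$ involving $\delta x_0,\delta X,\mathbb{E}\delta x_0,\delta\varphi$ cancels, produces exactly the four equations in \eqref{leader Hamiltonian system}. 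In particular, the cross-couplings $C_0^\top\mathbb{E}y_0$, $(F^\top-P_3^\top BR^{-1}B^\top)y$, $\Gamma_0^\top S_0^\top\mathbb{E}u_0^*$ and the terminal data $\tilde\Gamma_0,\tilde\Gamma_1$ arise naturally from the mean-field and sticky-control structure of the running and terminal cost.

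What survives the cancellation is a single expression of the form
\begin{equation*}
0 \le \mathbb{E}\int_0^T \Bigl\langle R_0 u_0^* + B_0^\top y_0 - B_0^\top P_3^\top \gamma + S_0\bigl(\bar{x}_0^{u_0^*} - \Gamma_0 \mathbb{E}\bar{x}_i^{u_0^*,u_i^*} - \bar{\Gamma}_0 \mathbb{E}\bar{x}_0^{u_0^*}\bigr),\, v(t) \Bigr\rangle\, dt.
\end{equation*}
By Lemma \ref{leader lemma1} the filtration $\mathcal{F}_t^{\bar{Y}_0^{u_0^*}}$ coincides with the control-independent $\mathcal{F}_t^{\bar{Y}_{01}}$, so $v$ ranges freely over processes adapted to this filtration. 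Projecting the bracket onto $\mathcal{F}_t^{\bar{Y}_0^{u_0^*}}$ and invoking that $u_0^*$ is already adapted there yields $\mathbb{E}[\,\cdot\,|\mathcal{F}_t^{\bar{Y}_0^{u_0^*}}]=0$, which is solvable for $u_0^*$ (using that $\gamma$ is deterministic and $R_0>0$) and produces the representation \eqref{leader open loop optimal strategy}. Convexity of $J_0$ under Assumption \ref{A1} upgrades necessity to sufficiency.

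The main obstacle I foresee is the algebraic bookkeeping required to make every mean-field correction ($\mathbb{E}y_0$, $\mathbb{E}u_0^*$, the compound tilde-terms $\tilde{\Gamma}_0,\tilde{\Gamma}_1$, and the decoupling coefficients $P_2,P_3,\varphi^{u_0^*}$ that enter through the followers' reaction) land in the adjoint dynamics in precisely the form stated in \eqref{leader Hamiltonian system}; in particular, one must track separately the terms produced by duality against $\bar{x}_0^{u_0^*}$ and those produced by duality against $\mathbb{E}\bar{x}_0^{u_0^*}$. A secondary technical point is the simultaneous orientation of the three pairings: the standard forward–backward pairing for the SDE $\bar{x}_0^{u_0^*}$, a forward–backward pairing for the ODE $\mathbb{E}\bar{x}_i^{u_0^*,u_i^*}$, and a \emph{backward–forward} pairing for $\varphi^{u_0^*}$, whose only consistent dual is the forward $\gamma$ with $\gamma(0)=0$. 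Solvability of the stochastic Hamiltonian system \eqref{leader Hamiltonian system} then follows by decoupling: $y,\gamma$ are plain linear ODEs driven by known stochastic inputs, and $y_0$ is a conditional mean-field linear BSDE coupled with the filtered forward system, which can be handled by the same filtering and decoupling technique developed for the followers in the previous subsection.
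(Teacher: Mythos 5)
Your proposal follows essentially the same route as the paper's proof: a convex perturbation $u_0^\varepsilon=u_0^*+\varepsilon v_0$, duality via It\^o's formula applied to $\langle\Delta\bar{x}_0,y_0\rangle+\langle\Delta\mathbb{E}\bar{x}_i,y\rangle+\langle\Delta\varphi,\gamma\rangle$ with the forward adjoint $\gamma(0)=0$ paired against the backward $\varphi^{u_0}$, cancellation of the state variations to isolate the stationarity condition, and projection onto $\mathcal{F}_t^{\bar{Y}_{01}}=\mathcal{F}_t^{\bar{Y}_0^{u_0^*}}$ via Lemma \ref{leader lemma1}. The surviving first-order expression you write down matches the paper's $\bar{X}_2$, so the argument is correct as proposed.
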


    The proof is left to the Appendix. 
	
	\begin{myremark}
		For simplicity of notation, in the following text, we denote $\check{\theta}_0:=\mathbb{E}\Big[\left.\theta_0\right\rvert \mathcal{F}_t^{\bar{Y}^{u_0^*}_{0}}\Big]$ as the conditional expectation of an arbitrary stochastic process $\theta_0$ with respect to $\mathcal{F}_t^{\bar{Y}^{u_0^*}_{0}}$.
	\end{myremark}
	
	Noticing that the open-loop decentralized optimal strategy (\ref{leader open loop optimal strategy}) of leader $\mathcal{A}_0$ contains the optimal filtering estimate $\check{y}_0$, we need to derive the filtering equation that $\check{y}_0$ satisfies. However, $y_0$ is influenced by $\bar{x}^{u_0^*}_0$ in equation (\ref{leader Hamiltonian system}), so we also need to obtain the filtering equation for $\bar{x}_0^{u_0^*}$. Then, we get the following filtering result (\cite{Liptser-Shiryayev-1977}).
	\begin{mypro}
		Let Assumption \ref{A1} and the assumption of Proposition \ref{RE solvability} hold. The optimal filtering of the solution $(\bar{x}^{u_0^*}_0(\cdot), \mathbb{E}\bar{x}_i^{u_0^*,u_i^*}(\cdot), \varphi^{u_0^*}(\cdot), y_0(\cdot), y(\cdot), \gamma(\cdot))$ to (\ref{leader Hamiltonian system}) with respect to $\mathcal{F}_t^{\bar{Y}_0^{u_0^*}}$ satisfies
		\begin{equation}\label{filtering of leader hamiltionian}
			\left\{\begin{aligned}
				d \check{\bar{x}}^{u_0^*}_0&=\left(A_0 \check{\bar{x}}^{u_0^*}_0+B_0 u^*_0+\bar{A}_0\mathbb{E}\bar{x}^{u_0^*}_0+C_0 \mathbb{E}\bar{x}_i^{u_0^*,u_i^*}+b_0\right) d t
                  +\left(\bar{\sigma}_0+\Pi_0H_0^\top f_0^{\top-1}\right) d \widetilde{W}_0, \\
				d \mathbb{E}\bar{x}_i^{u_0^*,u_i^*}&= \left[\left(A+\bar{A}+C-BR^{-1}B^\top P_2-BR^{-1}S\left(I_n-\bar{\Gamma}_2\right)\right)\mathbb{E}\bar{x}_i^{u_0^*,u_i^*}\right.\\
				&\quad +\left.\left(F-BR^{-1}B^\top P_3\right) \mathbb{E}\bar{x}_0^{u_0^*}-BR^{-1}B^\top\varphi^{u_0^*}+b\right] d t,\\
				d\varphi^{u_0^*}&= -\left[\left(A+\bar{A}-BR^{-1}B P_2-BR^{-1}S\left(I_n-\bar{\Gamma}_2\right)\right)^\top\varphi^{u_0^*}+P_3B_0\mathbb{E}u_0^*+P_2b\right.\\
				&\qquad +\left.P_3b_0+\left(\bar{\Gamma}_2^\top-I_n\right)Q\eta_2\right]dt,\\
				d\check{y}_0&= -\left\{A_0^\top \check{y}_0+\bar{A}_0^\top\mathbb{E}y_0+\left(F^\top-P_3^\top BR^{-1}B^\top\right)y+Q_0\check{\bar{x}}_0^{u_0^*}+\tilde{\Gamma}_0\mathbb{E}\bar{x}_0^{u_0^*}+S_0^\top u^*_0\right.\\
				&\qquad -\left.\bar{\Gamma}_0^\top S_0^\top\mathbb{E}u^*_0 +\left(\bar{\Gamma}_0^\top-I_n\right)Q_0\Gamma_0\mathbb{E}\bar{x}_i^{u_0^*,u_i^*}+\left(\bar{\Gamma}_0^\top-I_n\right)Q_0\eta_0\right\}dt+\tilde{z}_0d\widetilde{W}_0,\\
				dy=& -\left\{\left(A+\bar{A}+C-BR^{-1}B P_2-BR^{-1}S\left(I_n-\bar{\Gamma}_2\right)\right)^\top y+ C_0^\top \mathbb{E}y_0 \right.\\
				&\quad -\left.\Gamma_0^\top S_0^\top\mathbb{E}u^*_0+\Gamma_0^\top Q_0\left(\bar{\Gamma}_0-I_n\right)\mathbb{E}\bar{x}_0^{u_0^*}+\Gamma_0^\top Q\Gamma_0\mathbb{E}\bar{x}_i^{u_0^*,u_i^*}+\Gamma_0^\top Q_0\eta_0\right\}dt,\\
				d\gamma&= \left\{\left(A+\bar{A}-BR^{-1}B P_2-BR^{-1}S\left(I_n-\bar{\Gamma}_2\right)\right)\gamma+BR^{-1}B^\top y\right\}dt,\\
				\check{\bar{x}}^{u_0^*}_0(0)&= \xi_0, \quad \mathbb{E}\bar{x}_i^{u_0^*,u_i^*}(0)=\xi,\quad \varphi^{u_0^*}(T)=\left(\bar{\Gamma}_4^\top-I_n\right)G\eta_4,\quad \gamma(0)=0,\\
				\check{y}_0(T)&= G_0\check{\bar{x}}^*_0(T)+\tilde{\Gamma}_1\mathbb{E}\bar{x}_0^{u_0^*}(T)+\left(\bar{\Gamma}_1-I_n\right)G_0\Gamma_1\mathbb{E}\bar{x}_i^{u_0^*,u_i^*}(T)+\left(\bar{\Gamma}_1-I_n\right)G_0\eta_1,\\
				y(T)&=\Gamma_1^\top G_0\left(\bar{\Gamma}_1-I_n\right)\mathbb{E}\bar{x}_0^{u_0^*}(T)+\Gamma_1^\top G_0\Gamma_1 \mathbb{E}\bar{x}_i^{u_0^*,u_i^*}(T)+\Gamma_1^\top G_0\eta_1,
			\end{aligned}\right.
		\end{equation}
		where $\widetilde{W}_0$ is an $R^n$-valued standard Brownian motion satisfying
		\begin{equation}
			\widetilde{W}_0:=\int_0^t f_0^{-1} H_0\left(\bar{x}_0^{u_0^*}-\check{\bar{x}}_0^{u_0^*}\right)ds+\overline{W}_0,
		\end{equation}
		and the mean square error $\Pi_0:=\mathbb{E}\left[\left(\bar{x}_0^{u_0^*}-\hat{\bar{x}}_0^{u_0^*}\right)\left(\bar{x}_0^{u_0^*}-\hat{\bar{x}}_0^{u_0^*}\right)^\top\right]$ is given by the following Riccati equation:
		\begin{equation}
			\left\{\begin{aligned}
				\dot{\Pi}_0&= A_0\Pi_0+\Pi_0 A_0^\top -\left(\bar{\sigma}_0+\Pi_0 H_0^\top (f_0^{\top})^{-1}\right)\left(\bar{\sigma}_0+\Pi_0 H_0^\top(f_0^\top)^{-1}\right)^\top\\
                &\quad +\sigma_0\sigma_0^\top+\bar{\sigma}_0\bar{\sigma}_0^\top,\\
				\Pi_0(0)&=0\,.
			\end{aligned}\right.
		\end{equation}
	\end{mypro}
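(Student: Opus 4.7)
The plan is to reduce the six-component system \eqref{leader Hamiltonian system} to two nontrivial filtering problems by first isolating the components that are already $\mathcal{F}_t^{\bar{Y}_0^{u_0^*}}$-measurable (in fact deterministic), and then applying the Kalman--Bucy theory and the backward filtering trick from Liptser--Shiryayev \cite{Liptser-Shiryayev-1977} to the remaining pair $(\bar{x}_0^{u_0^*},y_0)$. Concretely, I first inspect \eqref{leader Hamiltonian system} to observe that the equations for $\mathbb{E}\bar{x}_i^{u_0^*,u_i^*}$, $\varphi^{u_0^*}$, $y$ and $\gamma$ have deterministic coefficients and deterministic boundary data: they reduce to coupled linear ODEs (since $\mathbb{E} y_0$, $\mathbb{E} u_0^*$, $\mathbb{E}\bar{x}_0^{u_0^*}$ are deterministic once the loop is closed), so these four components are $\mathcal{F}_t^{\bar{Y}_0^{u_0^*}}$-adapted (trivially), and filtering leaves them unchanged. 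This yields immediately the second, third, fifth and sixth lines of \eqref{filtering of leader hamiltionian}.

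Next, I filter the forward SDE for $\bar{x}_0^{u_0^*}$ given the observation \eqref{leader limiting observation 1}. Because the state noise $\bar{\sigma}_0 d\overline{W}_0$ is correlated with the observation noise $f_0 d\overline{W}_0$, I apply the Kalman--Bucy filter in its correlated-noise form (Theorem 10.3 of \cite{Liptser-Shiryayev-1977}): the conditional mean $\check{\bar{x}}_0^{u_0^*}$ obeys the first equation of \eqref{filtering of leader hamiltionian} with gain $\bar{\sigma}_0+\Pi_0 H_0^\top (f_0^\top)^{-1}$, and the innovation process
\[
\widetilde{W}_0(t) := \overline{W}_0(t)+\int_0^t f_0^{-1}H_0\bigl(\bar{x}_0^{u_0^*}(s)-\check{\bar{x}}_0^{u_0^*}(s)\bigr)ds
\]
is an $\mathcal{F}_t^{\bar{Y}_0^{u_0^*}}$-Brownian motion by L\'evy's characterisation (it is a continuous $\mathcal{F}_t^{\bar{Y}_0^{u_0^*}}$-martingale with quadratic variation $t I_n$). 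Standard computation of the dynamics of $(\bar{x}_0^{u_0^*}-\check{\bar{x}}_0^{u_0^*})(\bar{x}_0^{u_0^*}-\check{\bar{x}}_0^{u_0^*})^\top$ via It\^o's formula, taking expectations, and matching the quadratic terms delivers the Riccati equation for $\Pi_0$ stated in the proposition.

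For the backward component $y_0$, which is adapted to $\widetilde{\mathcal{F}}_t^0$ but not to $\mathcal{F}_t^{\bar{Y}_0^{u_0^*}}$, I take $\check{y}_0=\mathbb{E}[y_0\mid\mathcal{F}_t^{\bar{Y}_0^{u_0^*}}]$ and compute its dynamics. By the martingale representation theorem with respect to the innovation $\widetilde{W}_0$, there exists a process $\tilde{z}_0$ such that the driving martingale term is $\tilde{z}_0\,d\widetilde{W}_0$. The drift of $\check{y}_0$ is obtained by conditioning the drift of $y_0$ on $\mathcal{F}_t^{\bar{Y}_0^{u_0^*}}$, which commutes with the linear operations appearing in the generator of $y_0$; the state-dependent terms $Q_0\bar{x}_0^{u_0^*}$ become $Q_0\check{\bar{x}}_0^{u_0^*}$, while the deterministic terms and those involving $\mathbb{E}\bar{x}_0^{u_0^*}$, $\mathbb{E}\bar{x}_i^{u_0^*,u_i^*}$, $\mathbb{E}u_0^*$, $\mathbb{E}y_0$, $y$ and $\gamma$ are unchanged. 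Together with the terminal condition $\check{y}_0(T)=\mathbb{E}[y_0(T)\mid\mathcal{F}_T^{\bar{Y}_0^{u_0^*}}]$, which unfolds to the displayed formula because $\bar{x}_0^{u_0^*}(T)$ is the only non-measurable piece of $y_0(T)$, this yields the fourth line of \eqref{filtering of leader hamiltionian}.

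The main obstacle I anticipate is the joint treatment of the correlated-noise forward filter and the backward filter: one must argue that the innovation $\widetilde{W}_0$ generated by the forward filter is simultaneously the martingale used in the representation theorem for $\check{y}_0$, which requires the identity $\mathcal{F}_t^{\bar{Y}_0^{u_0^*}}=\mathcal{F}_t^{\widetilde{W}_0}$ (a standard consequence of the invertibility of $f_0$ and the Lipschitz structure of the innovations SDE). Once this identification is in place, the decoupling between deterministic, forward-filtered and backward-filtered components is clean and the system \eqref{filtering of leader hamiltionian} assembles line by line.
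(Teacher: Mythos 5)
Your proposal is correct and follows exactly the route the paper itself relies on: the paper states this proposition without proof, citing Liptser--Shiryayev \cite{Liptser-Shiryayev-1977}, and your argument simply spells out that citation --- the components $\mathbb{E}\bar{x}_i^{u_0^*,u_i^*}$, $\varphi^{u_0^*}$, $y$, $\gamma$ are deterministic and pass through the conditioning unchanged, the correlated-noise Kalman--Bucy filter gives the first equation with gain $\bar{\sigma}_0+\Pi_0H_0^\top(f_0^\top)^{-1}$ and the stated Riccati equation for $\Pi_0$, and conditioning the backward component on the innovation filtration (using $\mathcal{F}_t^{\bar{Y}_0^{u_0^*}}=\mathcal{F}_t^{\widetilde{W}_0}$ and martingale representation) yields the equation for $\check{y}_0$. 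No gaps.
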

	
	\subsection{Feedback decentralized strategy of the leader}
	
	Now, we derive the state feedback form of the open loop strategy of the leader $\mathcal{A}_0$, by the dimension expansion technique of Yong \cite{Yong-2002}. Let
	$$
	\mathbb{X}_0:=\left[\begin{array}{c}
		\bar{x}_0^{u_0^*} \\
		\mathbb{E}\bar{x}_i^{u_0^*,u_i^*} \\
		\gamma
	\end{array}\right],\quad \Phi:=\left[\begin{array}{c}
		y_0 \\
		y \\
		\varphi^{u_0^*}
	\end{array}\right],\quad \widetilde{\mathbb{Z}}_0:=\left[\begin{array}{c}
		\tilde{z}_0 \\
		0 \\
		0
	\end{array}\right].
	$$
	Then, the Hamiltonian system (\ref{leader Hamiltonian system}) of the leader $\mathcal{A}_0$ can be rewritten as
	\begin{equation}\label{dimension expansion}
		\left\{\begin{aligned}
			d \check{\mathbb{X}}_0= & \left(\mathbb{A}_{0} \check{\mathbb{X}}_0+\bar{\mathbb{A}}_{0} \mathbb{E} \mathbb{X}_0+\mathbb{C}_{0} \mathbb{E} \Phi
			+\mathbb{B}_0u_0^*+\mathbb{D}_0 \right) d t+\tilde{\sigma}_0d\widetilde{W}_0, \\
			d \check{\Phi}= & -\left(\mathbb{A}_0^\top \check{\Phi}+\bar{\mathbb{A}}^\top_0 \mathbb{E} \Phi+\mathbb{Q}_0\check{\mathbb{X}}_0
			+\bar{\mathbb{Q}}_0 \mathbb{E} \mathbb{X}_0+\mathbb{S}_0u_0^*+\bar{\mathbb{B}}_0\mathbb{E}u^*_0+\bar{\mathbb{D}}_0\right) d t+\widetilde{\mathbb{Z}}_0 d \widetilde{W}_0 ,\\
			\check{\mathbb{X}}_0(0)= &\ \Xi_0,\quad \check{\Phi}(T)=\mathbb{G}_0 \check{\mathbb{X}}_0(T)+\mathbb{G}_1\mathbb{E}\mathbb{X}_0(T)+\mathbb{G}_2,
		\end{aligned}\right.
	\end{equation}
	where we have denoted
	$$
	\begin{aligned}
		& \mathbb{A}_{0}:=\left[\begin{array}{ccc}
			A_0 & 0 & 0 \\
			0 & 0 & 0 \\
			0 & 0 & 0
		\end{array}\right], \mathbb{C}_0=\left[\begin{array}{ccc}
		0 & 0 & 0 \\
		0 & 0 & -BR^{-1}B^\top \\
		0 & BR^{-1}B^\top & 0
		\end{array}\right], 
		\mathbb{B}_0:=\left[\begin{array}{c}
		B_0  \\
		0 \\
		0
		\end{array}\right], \mathbb{D}_0:=\left[\begin{array}{c}
		b_0  \\
		b \\
		0
		\end{array}\right],
		  \\
		&\bar{\mathbb{A}}_{0}:=\left[\begin{array}{ccc}
			\bar{A}_0 & C_0 & 0 \\
			F-BR^{-1}B^\top P_3 & A_2+C & 0 \\
			0 & 0 & A_2
		\end{array}\right],\tilde{\sigma}_0:=\left[\begin{array}{c}
		\bar{\sigma}_0+\Pi_0H_0^\top f_0^{\top-1}  \\
		0 \\
		0
		\end{array}\right],\bar{\mathbb{B}}_0:=\left[\begin{array}{c}
		-\bar{\Gamma}_0^\top S_0^\top\\
		-\Gamma_0^\top S_0^\top  \\
		P_3B_0
		\end{array}\right] ,
		 \\
		&
		\mathbb{Q}_0:=\left[\begin{array}{ccc}
			Q_0 & 0 & 0 \\
			0 & 0 & 0 \\
			0 & 0 & 0
		\end{array}\right] ,\bar{\mathbb{Q}}_0:=\left[\begin{array}{ccc}
		\tilde{\Gamma}_0 & \left(\bar{\Gamma}_0^\top -I_n\right)Q_0\Gamma_0 & 0 \\
		\Gamma_0^\top Q_0\left(\bar{\Gamma}_0 -I_n\right) & \Gamma_0^\top Q_0 \Gamma_0 & 0 \\
		0 & 0 & 0
		\end{array}\right] ,
		\Xi_0:=\left[\begin{array}{c}
		\xi_0 \\
		\xi  \\
		0
		\end{array}\right] ,
		\\
		&\bar{\mathbb{D}}_0:=\left[\begin{array}{c}
			\left(\bar{\Gamma}_0^\top -I_n\right)Q_0\eta_0 \\
			\bar{\Gamma}_0^\top Q_0\eta_0 \\
			P_2b+P_3b_0+\left(\bar{\Gamma}_2^\top-I_n\right)Q\eta_2
		\end{array}\right],
		\mathbb{G}_0:=\left[\begin{array}{ccc}
			G_0 & 0 & 0 \\
			0 & 0 & 0 \\
			0 & 0 & 0
		\end{array}\right],\mathbb{S}_0:=\left[\begin{array}{c}
		S^\top_0 \\
		0  \\
		0
		\end{array}\right],\\
		& \mathbb{G}_1:=\left[\begin{array}{ccc}
			\tilde{\Gamma}_1 & \left(\bar{\Gamma}_1^\top -I_n\right)G_0\Gamma_1 & 0 \\
			\Gamma_1^\top G_0\left(\bar{\Gamma}_1 -I_n\right) & \Gamma_1^\top G_0 \Gamma_1 & 0 \\
			0 & 0 & 0
		\end{array}\right] ,
		\mathbb{G}_2:=\left[\begin{array}{c}
			\left(\bar{\Gamma}_1^\top -I_n\right)G_0\eta_1 \\
			\bar{\Gamma}_1^\top G_0\eta_1 \\
			\left(\bar{\Gamma}_4^\top-I_n\right)G\eta_4
		\end{array}\right] ,
	\end{aligned}
	$$
	with $A_2:=A+\bar{A}-BR^{-1}B^\top P_2-BR^{-1}S\left(I_n-\bar{\Gamma}_2\right)$. Then the optimal strategy of leader $\mathcal{A}_0$ can be represented as
	\begin{equation}\label{leader optimal strategy open loop  new}
		u^*_0=\mathcal{B}_0\check{\mathbb{X}}_0+\bar{\mathcal{B}}_0\mathbb{E}\mathbb{X}_0+\mathcal{B}_1\check{\Phi},
	\end{equation}
	where $\mathcal{B}_0:=\left(-R_0^{-1}S,0,0\right)$, $\bar{\mathcal{B}}_0:=\left(-R_0^{-1}S_0\bar{\Gamma}_0,-R_0^{-1}S_0\Gamma_0,R_0^{-1}B_0^\top P_3^\top\right)$,  $\mathcal{B}_1:=\left(-R_0^{-1}B_0^\top,0,0\right)$.
	Noticing the terminal condition of (\ref{dimension expansion}), we can assume that
	\begin{equation}\label{leader decouple equation}
		\check{\Phi}(t)=\Sigma_1(t) \left(\check{\mathbb{X}}_0(t)-\mathbb{E} \mathbb{X}_0(t)\right)+\Sigma_2(t) \mathbb{E} \mathbb{X}_0(t)+\psi(t),\quad t\in[0,T],
	\end{equation}
	where $\Sigma_1(\cdot)$, $\Sigma_2(\cdot)$ and $\psi(\cdot)$ are deterministic matrix-valued functions satisfying $\Sigma_1(T)=\mathbb{G}_0$, $\Sigma_2(T)=\mathbb{G}_1$ and $\psi(T)=\mathbb{G}_2$. In addition, from the first equation of (\ref{dimension expansion}), we have
	\begin{equation}\label{EX equation}
		\left\{\begin{aligned}
			d\mathbb{E}\mathbb{X}_0&=\left[\left(\mathbb{A}_0+\bar{\mathbb{A}}_0\right)\mathbb{E}\mathbb{X}_0+\mathbb{C}_{0} \mathbb{E} \Phi+\mathbb{B}_0 \mathbb{E} u_0^*+\mathbb{D}_0\right] d t, \\
			\mathbb{E}\mathbb{X}_0(0)&=\Xi_0.
		\end{aligned}\right.
	\end{equation}
	Applying It\^{o}'s formula, we have
	$$
	\begin{aligned}
		d \check{\Phi}
		&= -\left(\mathbb{A}_0^\top \check{\Phi}+\bar{\mathbb{A}}_0^\top \mathbb{E} \Phi+\mathbb{Q}_0\check{\mathbb{X}}_0 +\bar{\mathbb{Q}}_0 \mathbb{E} \mathbb{X}_0+\mathbb{S}_0u_0^*
		+\bar{\mathbb{B}}_0\mathbb{E}u^*_0+\bar{\mathbb{D}}_0\right) d t+\widetilde{\mathbb{Z}}_0 d \widetilde{W}_0.
	\end{aligned}
	$$
	By comparing the coefficients of the diffusion terms, we obtain
	$$
	\widetilde{\mathbb{Z}}_0=\Sigma_1\tilde{\sigma}_0.
	$$
	By comparing the coefficients of the drift terms and noting (\ref{leader optimal strategy open loop  new}), we obtain
	\begin{equation}\label{compare the coefficients of leader }
		\begin{aligned}
			&\left(\dot{\Sigma}_1+\Sigma_1\left(\mathbb{A}_0+\mathbb{B}_0\mathcal{B}_0\right)+\left(\mathbb{A}_0^\top+\mathbb{S}_0\mathcal{B}_1\right)\Sigma_1+\Sigma_1\mathbb{B}_0\mathcal{B}_1\Sigma_1+\mathbb{Q}_0+\mathbb{S}_0\mathcal{B}_0\right)\left(\check{\mathbb{X}}_0-\mathbb{E}\mathbb{X}_0\right)\\
			&+\left[\dot{\Sigma}_2+\Sigma_2\left(\mathbb{A}_0+\bar{\mathbb{A}}_0+\mathbb{B}_0\left(\mathcal{B}_0+\bar{\mathcal{B}}_0\right)\right)+\left(\mathbb{A}^\top_0+\bar{\mathbb{A}}^\top_0+\left(\bar{\mathbb{B}}_0+\mathbb{S}_0\right)\mathcal{B}_1\right)\Sigma_2\right.\\	&\left.+\Sigma_2\left(\mathbb{C}_0+\mathbb{B}_0\mathcal{B}_1\right)\Sigma_2+\left(\mathbb{Q}_0+\bar{\mathbb{Q}}_0\right)+\left(\bar{\mathbb{B}}_0+\mathbb{S}_0\right)\left(\mathcal{B}_0+\bar{\mathcal{B}}_0\right)\right]\mathbb{E}\mathbb{X}_0\\
			&+\dot{\psi}+\left(\mathbb{A}_0^\top+\bar{\mathbb{A}}_0^\top +\Sigma_2\mathbb{C}_0+\Sigma_2\mathbb{B}_0\mathcal{B}_1+\left(\bar{\mathbb{B}}_0+\mathbb{S}_0\right)\mathcal{B}_1\right)\psi+\Sigma_2\mathbb{D}_0+\bar{\mathbb{D}}_0=0.\\
		\end{aligned}
	\end{equation}
	Then, we can get the equation of $\Sigma_1(\cdot)$:
	\begin{equation}\label{Sigma1 equation}
		\left\{\begin{aligned}
			&\dot{\Sigma}_1+\Sigma_1\left(\mathbb{A}_0+\mathbb{B}_0\mathcal{B}_0\right)+\left(\mathbb{A}_0^\top+\mathbb{S}_0\mathcal{B}_1\right)\Sigma_1+\Sigma_1\mathbb{B}_0\mathcal{B}_1\Sigma_1+\mathbb{Q}_0+\mathbb{S}_0\mathcal{B}_0=0, \\
			&\Sigma_1(T)=\mathbb{G}_0,
		\end{aligned}\right.
	\end{equation}
	the equation of $\Sigma_2(\cdot)$:
	\begin{equation}\label{Sigma2 equation}
		\left\{\begin{aligned}
			&\dot{\Sigma}_2+\Sigma_2\left(\mathbb{A}_0+\bar{\mathbb{A}}_0+\mathbb{B}_0\left(\mathcal{B}_0+\bar{\mathcal{B}}_0\right)\right)+\left(\mathbb{A}^\top_0+\bar{\mathbb{A}}^\top_0+\left(\bar{\mathbb{B}}_0+\mathbb{S}_0\right)\mathcal{B}_1\right)\Sigma_2\\	&+\Sigma_2\left(\mathbb{C}_0+\mathbb{B}_0\mathcal{B}_1\right)\Sigma_2+\left(\mathbb{Q}_0+\bar{\mathbb{Q}}_0\right)+\left(\bar{\mathbb{B}}_0+\mathbb{S}_0\right)\left(\mathcal{B}_0+\bar{\mathcal{B}}_0\right)=0,\\
			&\Sigma_2(T)=\mathbb{G}_1,
		\end{aligned}\right.
	\end{equation}
	and
	\begin{equation}\label{psi equation}
		\left\{\begin{aligned}
			&\dot{\psi}+\left(\mathbb{A}_0^\top+\bar{\mathbb{A}}_0^\top +\Sigma_2\mathbb{C}_0+\Sigma_2\mathbb{B}_0\mathcal{B}_1+\left(\bar{\mathbb{B}}_0+\mathbb{S}_0\right)\mathcal{B}_1\right)\psi+\Sigma_2\mathbb{D}_0+\bar{\mathbb{D}}_0 =0,\\
			&\psi(T)=\mathbb{G}_2\,.
		\end{aligned}\right.
	\end{equation}
	
	\begin{myremark}
		We note that (\ref{Sigma1 equation}) and (\ref{Sigma2 equation}) are asymmetric Riccati equations, which are hard to solve in general. But, by using the result of Page 11 in Reid \cite{Reid-1972}, we can derive conditions for the solvability of equations (\ref{Sigma1 equation}) and (\ref{Sigma2 equation}) (similar to Proposition \ref{RE solvability}).  For convenience in the subsequent analysis, we assume the solvability of (\ref{Sigma1 equation}) and (\ref{Sigma2 equation}). Then noticing the equation (\ref{psi equation}) is linear ODE, which obviously admits a unique solution.
	\end{myremark}
	
	Therefore, the feedback decentralized strategy of the leader $\mathcal{A}_0$ can be given in the following.
	
	\begin{mythm}\label{theorem feedback of leader}
		Let Assumption \ref{A1} and the assumption of Proposition \ref{RE solvability} hold. Let $\Sigma_1(\cdot)$ and $\Sigma_2(\cdot)$ be the solution to (\ref{Sigma1 equation}) and (\ref{Sigma2 equation}), then the feedback decentralized strategy of the leader $\mathcal{A}_0$ can be represented as
		\begin{equation}\label{leader feedback strategy}
			\begin{aligned}
				u_0^*=&\left(\mathcal{B}_0+\mathcal{B}_1\Sigma_1\right)\check{\mathbb{X}}_0+\left(\bar{\mathcal{B}}_0+\mathcal{B}_1\left(\Sigma_2-\Sigma_1\right)\right)\mathbb{E}\mathbb{X}_0+\mathcal{B}_1\psi,
			\end{aligned}
		\end{equation}
		where $\check{\mathbb{X}}_0(\cdot)\in L_{\mathcal{F}_t^{\bar{Y}_0^*}}^2\left(0, T ; \mathbb{R}^{3n}\right)$ satisfies the following SDE:
		\begin{equation}\label{check X_0}
			\left\{\begin{aligned}
				d \check{\mathbb{X}}_0= & \left\{\left[\mathbb{A}_{0}+\mathbb{B}_0\left(\mathcal{B}_0+\mathcal{B}_1\Sigma_1\right)\right] \check{\mathbb{X}}_0 +\left[\bar{\mathbb{A}}_0+\mathbb{C}_0\Sigma_2+\mathbb{B}_0\left(\bar{\mathcal{B}}_0+\mathcal{B}_1\left(\Sigma_2-\Sigma_1\right)\right)\right] \mathbb{E} \mathbb{X}_0\right.\\
				&\left.+\left(\mathbb{C}_0+\mathbb{B}_0\mathcal{B}_1\right)\psi+\mathbb{D}_0 \right\} d t+\tilde{\sigma}_0d\widetilde{W}_0, \\
				\check{\mathbb{X}}_0(0)= &\ \Xi_0,
			\end{aligned}\right.
		\end{equation}
		and $\mathbb{E}{\mathbb{X}}_0(\cdot)\in L^2\left(0, T ; \mathbb{R}^{3n}\right)$ satisfies the following ODE:
		\begin{equation}\label{E X_0}
			\left\{\begin{aligned}
				d \mathbb{E}\mathbb{X}_0= & \left\{\left[\mathbb{A}_{0}+\bar{\mathbb{A}}_0+\mathbb{C}_0\Sigma_2
				+\mathbb{B}_0\left(\mathcal{B}_0+\bar{\mathcal{B}}_0+\mathcal{B}_1\Sigma_2\right)\right] \mathbb{E} \mathbb{X}_0\right.\\
				&\left.+\left(\mathbb{C}_0+\mathbb{B}_0\mathcal{B}_1\right)\psi+\mathbb{D}_0 \right\} d t, \\
				\mathbb{E}\mathbb{X}_0(0)= &\ \Xi_0,
			\end{aligned}\right.
		\end{equation}
		and $\psi(\cdot) \in L^2\left(0, T ; \mathbb{R}^{3n}\right)$ satisfies the {\it backward ODE} (BODE) (\ref{psi equation}), respectively.
	\end{mythm}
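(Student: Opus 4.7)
The plan is to verify the feedback representation in three steps: derive the closed-loop control formula from the decoupling ansatz, plug it back into the expanded Hamiltonian system to produce the closed-loop dynamics for $\check{\mathbb{X}}_0$ and $\mathbb{E}\mathbb{X}_0$, and then invoke standard linear SDE/ODE theory for well-posedness. The key input is that the coefficient-matching computation leading to \eqref{Sigma1 equation}--\eqref{psi equation} has already been carried out above; the only remaining task is to assemble the pieces.

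First, I would substitute the decoupling ansatz \eqref{leader decouple equation} into the open-loop representation \eqref{leader optimal strategy open loop  new}. Since
\begin{equation*}
\mathcal{B}_1\check{\Phi}=\mathcal{B}_1\Sigma_1\check{\mathbb{X}}_0+\mathcal{B}_1(\Sigma_2-\Sigma_1)\mathbb{E}\mathbb{X}_0+\mathcal{B}_1\psi,
\end{equation*}
adding the $\mathcal{B}_0\check{\mathbb{X}}_0+\bar{\mathcal{B}}_0\mathbb{E}\mathbb{X}_0$ part of \eqref{leader optimal strategy open loop  new} immediately yields the feedback form \eqref{leader feedback strategy}. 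Note that $\check{\mathbb{X}}_0$ is $\mathcal{F}_t^{\bar{Y}_0^{u_0^*}}$-adapted and $\mathbb{E}\mathbb{X}_0,\psi$ are deterministic, so the candidate $u_0^*$ is automatically an element of $\mathcal{U}_0^d$.

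Next, I would insert this feedback control into the first equation of \eqref{dimension expansion}. A preliminary step is to observe that taking expectation in \eqref{leader decouple equation} and using $\mathbb{E}\check{\mathbb{X}}_0=\mathbb{E}\mathbb{X}_0$ gives $\mathbb{E}\Phi=\mathbb{E}\check{\Phi}=\Sigma_2\mathbb{E}\mathbb{X}_0+\psi$. Plugging this together with $u_0^*$ into the $\check{\mathbb{X}}_0$-equation, and collecting terms multiplying $\check{\mathbb{X}}_0$, $\mathbb{E}\mathbb{X}_0$ and $\psi$, produces exactly \eqref{check X_0}; taking expectations in \eqref{check X_0} and again using $\mathbb{E}\check{\mathbb{X}}_0=\mathbb{E}\mathbb{X}_0$ yields \eqref{E X_0}. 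This is a mechanical rearrangement once one has the ansatz in hand, so no new algebraic identity is needed beyond those already produced while deriving \eqref{Sigma1 equation}--\eqref{psi equation}.

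For well-posedness, I would first solve the deterministic ODE \eqref{E X_0} for $\mathbb{E}\mathbb{X}_0$ (its coefficients are bounded and measurable, so existence and uniqueness in $L^2(0,T;\mathbb{R}^{3n})$ follows from the classical linear ODE theory, provided \eqref{Sigma1 equation}--\eqref{psi equation} are solvable, which is the hypothesis). Once $\mathbb{E}\mathbb{X}_0$ is fixed, \eqref{check X_0} becomes a linear SDE with bounded coefficients driven by $\widetilde{W}_0$, and its unique solution in $L^2_{\mathcal{F}_t^{\bar{Y}_0^{u_0^*}}}(0,T;\mathbb{R}^{3n})$ is guaranteed by Proposition 2.6 of Yong \cite{Yong-2013}. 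The main obstacle, as already flagged in the preceding remark, is the solvability of the asymmetric Riccati equations \eqref{Sigma1 equation} and \eqref{Sigma2 equation}, which the theorem explicitly assumes; conditional on that assumption everything else reduces to the routine verification described above.
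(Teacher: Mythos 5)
Your proposal is correct and follows essentially the same route as the paper's own (much terser) proof: substitute the decoupling ansatz \eqref{leader decouple equation} into the open-loop formula \eqref{leader optimal strategy open loop  new} to get \eqref{leader feedback strategy}, insert this into the first equation of \eqref{dimension expansion} to obtain \eqref{check X_0}, and take expectations to get \eqref{E X_0}. The additional details you supply — the identity $\mathbb{E}\Phi=\Sigma_2\mathbb{E}\mathbb{X}_0+\psi$, the adaptedness of the candidate control, and the well-posedness of \eqref{E X_0} and \eqref{check X_0} — are consistent with the discussion the paper places immediately after the theorem.
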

	
	\begin{proof}
		The equation (\ref{leader feedback strategy}) can be obtained by equation (\ref{leader optimal strategy open loop new}) and equation(\ref{leader decouple equation}). The equation(\ref{check X_0}) can be achieved from the first equation (\ref{dimension expansion}) and (\ref{leader feedback strategy}). Moreover, (\ref{E X_0}) can be got from (\ref{check X_0}) by taking expectation directly. The proof is complete.
	\end{proof}
	
	Similar to the subsection 3.2, in brief, we discuss the well-posedness of (\ref{leader Hamiltonian system}), which is equivalent to (\ref{filtering of leader hamiltionian}) and also equivalent to (\ref{dimension expansion}). Under the assumptions of Theorem \ref{theorem feedback of leader}, the equation (\ref{check X_0}) admits a solution by the classical result of SDEs. Then, the second equation of (\ref{dimension expansion}) admits a solution by Theorem 2.1 of Li et al. \cite{Li-Sun-Xiong-2019}. In other word, under the assumptions of Theorem \ref{theorem feedback of leader}, (\ref{dimension expansion}) admits a solution. Equivalently, (\ref{leader Hamiltonian system}) admits a solution.
	
	\section{$\varepsilon$-Stackelberg-Nash equilibria analysis}
	
	In Section 3, we derived the decentralized Stackelberg-Nash equilibrium, denoted as $\left(u_0^*(\cdot),u^*(\cdot)\right.$ $\left.\equiv\left(u^*_1(\cdot), u^*_2(\cdot), \ldots, u^*_N(\cdot)\right)\right)$, for the \textbf{Problem \ref{problem decentralized}}. In this section, we will prove that the decentralized strategies $\left(u_0^*(\cdot),u^*(\cdot)\right)$ satisfy the $\varepsilon$-Stackelberg-Nash equilibrium property. To begin with, we define what an $\varepsilon$-Stackelberg-Nash equilibrium entails.
	
	\begin{mydef}\label{varipsilon Stackelberg-Nash equalibrium}
		A strategy $\left(u_0^*(\cdot), u_1^*(\cdot), \cdots, u_N^*(\cdot)\right)$ is said to be an $\varepsilon$-Stackelberg-Nash equilibrium for \textbf{Problem \ref{problem centralized}}, if there exists a non-negative value $\varepsilon = \varepsilon(N)$ such that $\lim_{N \rightarrow \infty} \varepsilon(N) = 0$, which satisfies:
		
		(i) Given $u_0 \in \mathcal{U}_0^d$, for $i=1, \cdots, N$, finding the control strategy $u^*(\cdot;u_0)\equiv\left(u^*_1(\cdot;u_0), \ldots,\right.$ $\left. u^*_N(\cdot;u_0)\right)$ such that for any $u_i(\cdot) \in \mathcal{U}_i^d$,
		\begin{equation*}
			\mathcal{J}_i\left(u^*_i(\cdot;u_0),u^*_{-i}(\cdot;u_0), u_0\right) \leq  \mathcal{J}_i\left(u_i(\cdot); u^*_{-i}(\cdot;u_0), u_0\right)+\varepsilon;
		\end{equation*}
		
		(ii) Finding the control strategy  $u^*_0(\cdot)$ such that for any $u_0(\cdot) \in \mathcal{U}_0^d$,
		\begin{equation*}
			\mathcal{J}_0\left(u^*_0(\cdot),u^*(\cdot;u^*_0)\right)\leq  \mathcal{J}_0\left(u_0(\cdot ),u^*(\cdot;u_0)\right)+\varepsilon.
		\end{equation*}
	\end{mydef}
	
	The main result of this section is the following theorem.
	\begin{mythm}\label{thm4.1}
		Let Assumptions \ref{A1} and the assumption of Proposition \ref{RE solvability} hold. Let $\Sigma_1(\cdot)$ and $\Sigma_2(\cdot)$ be the solution to \eqref{Sigma1 equation} and \eqref{Sigma2 equation}, then the set of strategies $\left(u_0^*(\cdot), u_1^*(\cdot), u_2^*(\cdot),\right.$ $\left. \ldots, u_N^*(\cdot)\right)$ constitutes an $\varepsilon$-Stackelberg-Nash equilibrium for \textbf{Problem \ref{problem centralized}}, where for $i=1,2,\ldots, N$, 
		\begin{equation}\label{varipsilon Stackelberg-Nash equalibrium ui* and u0*}
			\left\{\begin{aligned}
				u_i^*=&-R^{-1}\left[\left(B^\top P_1+S\right)\hat{\bar{x}}_i^{u_0^*,u_i^*}+\left(B^\top\left(P_2-P_1\right)-S\bar{\Gamma}_2\right)\mathbb{E}\bar{x}_i^{u_0^*,u_i^*}+B^\top P_3\mathbb{E}\bar{x}_0^{u_0^*} +B^\top \varphi^{u_0^*} \right],\\
				u_0^*=&\left(\mathcal{B}_0+\mathcal{B}_1\Sigma_1\right)\check{\mathbb{X}}_0+\left(\bar{\mathcal{B}}_0+\mathcal{B}_1\left(\Sigma_2-\Sigma_1\right)\right)\mathbb{E}\mathbb{X}_0+\mathcal{B}_1\psi,
			\end{aligned}\right.
		\end{equation}
		with $P_1(\cdot)$, $\varphi^{u_0^*}(\cdot)$, $\hat{\bar{x}}_i^*(\cdot)$, $\mathbb{E}\bar{x}_i^{u_0^*,u_i^*}(\cdot)$, $\mathbb{E}\bar{x}_0^{u_0^*}(\cdot)$ satisfy \eqref{RE P1}, \eqref{varphi equation}, \eqref{hat bar x_i}, \eqref{E bar x_i}, \eqref{E x_0} and $\psi(\cdot)$, $\check{\mathbb{X}}_0(\cdot)$, $\mathbb{E}{\mathbb{X}}_0(\cdot)$ satisfy \eqref{psi equation}, \eqref{check X_0}, \eqref{E X_0}, respectively.
	\end{mythm}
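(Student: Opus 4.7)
The plan is to establish the two $\varepsilon$-properties in Definition \ref{varipsilon Stackelberg-Nash equalibrium} by the standard perturbation-plus-approximation argument that bridges the centralized dynamics of \textbf{Problem \ref{problem centralized}} and the limiting dynamics of \textbf{Problem \ref{problem decentralized}}. The central technical input is an $L^2$-approximation: for any admissible profile $(u_0,u_1,\ldots,u_N)$ with $u_i\in\mathcal{U}_i^d$ and $u_0\in\mathcal{U}_0^d$, I will prove
\begin{equation*}
\sup_{0\le t\le T}\mathbb{E}\bigl|{x^{u_0,u}}^{(N)}(t)-\mathbb{E}\bar{x}_i^{u_0,u_i}(t)\bigr|^{2}=O(1/N),\quad \sup_{0\le t\le T}\mathbb{E}\bigl|x_0^{u_0,u}(t)-\bar{x}_0^{u_0}(t)\bigr|^{2}=O(1/N),
\end{equation*}
together with the analogous bound for $x_i^{u_0,u}-\bar{x}_i^{u_0,u_i}$. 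These follow by subtracting the SDEs, applying Gronwall's inequality and invoking the strong law of large numbers for the i.i.d.\ sequence $\{\bar{x}_i^{u_0,u_i^*}\}_{i=1}^N$; the fact that this family really is i.i.d.\ (and not merely conditionally so) is a consequence of the decomposition in \eqref{follower limiting state decomposition 1}--\eqref{follower limiting state decomposition 2} and the admissibility structure \eqref{follower admissible control} coming from Lemma \ref{follower lemma1}.

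For part (i), fix $u_0\in\mathcal{U}_0^d$ and a candidate perturbation $u_i\in\mathcal{U}_i^d$ for follower $\mathcal{A}_i$; a routine single-agent-perturbation estimate shows that its effect on ${x^{u_0,u}}^{(N)}$ and on the remaining followers' states is $O(1/N)$. Combining this with the approximation above and expanding the quadratic forms in \eqref{follower cost} and \eqref{follower limiting cost}, I obtain
\begin{equation*}
\bigl|\mathcal{J}_i(u_i^*,u_{-i}^*,u_0)-J_i(u_i^*,u_0)\bigr|+\bigl|\mathcal{J}_i(u_i,u_{-i}^*,u_0)-J_i(u_i,u_0)\bigr|\le \varepsilon_1(N),
\end{equation*}
with $\varepsilon_1(N)=O(1/\sqrt{N})$. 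Since $u_i^*$ minimizes $J_i(\cdot,u_0)$ over $\mathcal{U}_i^d$ by Theorem \ref{open loop thm1 of follower} and Lemma \ref{follower lemma2}, the chain $\mathcal{J}_i(u_i^*,u_{-i}^*,u_0)\le J_i(u_i^*,u_0)+\varepsilon_1(N)\le J_i(u_i,u_0)+\varepsilon_1(N)\le \mathcal{J}_i(u_i,u_{-i}^*,u_0)+2\varepsilon_1(N)$ gives the desired inequality. A standard cutoff reduces the analysis to perturbations with $\mathbb{E}\int_0^T|u_i|^2dt$ bounded by a fixed constant, since larger deviations make the right-hand side trivially large.

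For part (ii), once the followers respond optimally with $u^*(\cdot;u_0)$ given by \eqref{follower feedback strategy}, both the centralized system for the leader under \eqref{leader state}--\eqref{follower state} and the limiting system \eqref{leader state decentralized new } are closed loops driven by $u_0$ only. Repeating the same approximation scheme on these closed loops and expanding \eqref{leader cost} and \eqref{leader decentralized cost functional new}, I will show a uniform bound
\begin{equation*}
\bigl|\mathcal{J}_0(u_0,u^*(\cdot;u_0))-J_0(u_0)\bigr|\le \varepsilon_2(N)=O(1/\sqrt{N}),
\end{equation*}
valid on the class of leader strategies with bounded energy. Combined with the optimality of $u_0^*$ for $J_0$ (Theorem \ref{open loop thm1 of leader}), this yields $\mathcal{J}_0(u_0^*,u^*(\cdot;u_0^*))\le \mathcal{J}_0(u_0,u^*(\cdot;u_0))+2\varepsilon_2(N)$, and setting $\varepsilon(N)=\max(2\varepsilon_1(N),2\varepsilon_2(N))$ completes the proof.

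The main obstacle is the uniform-in-$u_0$ control required in part (ii): an arbitrary admissible $u_0$ enters the followers' feedback through both $\varphi^{u_0}$ via \eqref{varphi equation} and $\mathbb{E}\bar{x}_i^{u_0,u_i^*}$ via \eqref{E bar x_i}, so the a priori estimates on the coupled system \eqref{hat bar x_i}--\eqref{E x_0} must be linear in $\|u_0\|_{L^2}$ with constants independent of $N$. I plan to handle this by splitting each process into a $u_0$-independent part plus a part that is linear and causal in $u_0$, then applying Gronwall separately and reassembling, exactly as in the decomposition \eqref{leader limiting state decomposition 1}--\eqref{follower limiting observation decomposition 2}. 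One subtlety is that, when $u_0\neq u_0^*$, the resulting followers' closed-loop states are no longer i.i.d.\ across $i$ unconditionally (they are only conditional on $\mathcal{F}_t^0$ through the presence of $\mathbb{E}\bar{x}_0^{u_0}$ in \eqref{follower limiting state}); this is resolved by a conditional law of large numbers applied to the $\{\bar{x}_{i2}^{u_0,u_i^*}\}_i$ family, which differ only through their idiosyncratic noises $(W_i,\overline{W}_i)$. A final cutoff on $\|u_0\|_{L^2}$ makes the bound uniform and closes the argument.
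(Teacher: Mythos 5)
Your proposal follows essentially the same route as the paper: Gronwall-plus-independence estimates giving $O(1/N)$ state approximations (the paper's Lemmas \ref{CC lemma}, \ref{follower perturbed CC lemma}, \ref{leader perturbed CC lemma}), Cauchy--Schwarz expansion of the quadratic costs giving $O(1/\sqrt{N})$ cost discrepancies (Lemmas \ref{left half of follower NE}, \ref{right half of follower NE}, \ref{right half of leader NE}), the reduction to perturbations of bounded energy, and the final chain of inequalities invoking optimality in the limiting problem. The only cosmetic difference is your worry about conditional independence of the followers' states for general $u_0$, which is moot since only the deterministic function $\mathbb{E}\bar{x}_0^{u_0}(\cdot)$ enters the follower dynamics, so the states remain i.i.d.\ across $i$ and the paper's direct variance computation suffices.
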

	
	To establish this theorem, we shall present several lemmas, whose proofs are left to the Appendix.
	
	\begin{mylem}\label{CC lemma}
		Let Assumption \ref{A1} and the assumption of Proposition \ref{RE solvability} hold. Let $\Sigma_1(\cdot)$ and $\Sigma_2(\cdot)$ be the solutions to \eqref{Sigma1 equation} and \eqref{Sigma2 equation}, respectively. Then for any $u_0(\cdot) \in \mathcal{U}_{0}^{d}$, we have the following estimations:
		\begin{equation}\label{CC1}
			\sup _{0 \leq t \leq T} \mathbb{E}\left[\left|{\hat{\bar{x}}}^{u_0,u^*(N)}(t)-\mathbb{E}\bar{x}_i^{u_0,u_i^*}(t)\right|^2\right] = O\left(\frac{1}{N}\right),
		\end{equation}
		\begin{equation}\label{CC2}
			\sup _{0 \leq t \leq T} \mathbb{E}\left[\left|\bar{x}^{u_0,u^*(N)}(t)-\mathbb{E}\bar{x}_i^{u_0,u^*(N)}(t)\right|^2\right] = O\left(\frac{1}{N}\right),
		\end{equation}
		\begin{equation}\label{CC3}
			\sup _{0 \leq t \leq T} \mathbb{E}\left[\left|{x}^{u_0,u^*(N)}(t)-\mathbb{E}\bar{x}_i^{u_0,u_i^*}(t)\right|^2\right] = O\left(\frac{1}{N}\right),
		\end{equation}
		\begin{equation}\label{leader approximation}
			\sup _{0 \leq t \leq T}\mathbb{E} \left[\left|x_0^{u_0,u^*}(t)-\bar{x}_0^{u_0}(t)\right|^2\right] = O\left(\frac{1}{N}\right).
		\end{equation}
		\begin{equation}\label{follower approximation}
			\sup _{1 \leq i \leq N}\left[\sup _{0 \leq t \leq T} \mathbb{E} \left|x_i^{u_0,u^*}(t)-\bar{x}_i^{u_0,u_i^*}(t)\right|^2\right] = O\left(\frac{1}{N}\right),
		\end{equation}
		where $\hat{\bar{x}}^{u_0,u^*(N)}(t):=\frac{1}{N}\sum_{i=1}^{N}\hat{\bar{x}}_i^{u_0,u^*_i}(t)$, $\bar{x}^{u_0,u^*(N)}(t):=\frac{1}{N}\sum_{i=1}^{N}\bar{x}_i^{u_0,u^*_i}(t)$,  $x^{u_0,u^*(N)}(t):=\frac{1}{N}\sum_{i=1}^{N}\\x_i^{u_0,u^*}(t)$, for $t\in[0,T]$.
	\end{mylem}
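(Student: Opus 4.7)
My plan is to prove the five estimates in two stages, first exploiting conditional independence to get the pure law-of-large-numbers estimates \eqref{CC1} and \eqref{CC2}, and then using a coupled Gronwall argument to propagate the rate to \eqref{CC3}, \eqref{leader approximation} and \eqref{follower approximation}. The crucial structural fact I would invoke throughout is that, for any fixed $u_0(\cdot)\in\mathcal{U}_0^d$, the processes $\bar{x}_i^{u_0,u_i^*}(\cdot)$ (and hence their filters $\hat{\bar{x}}_i^{u_0,u_i^*}(\cdot)$) are driven only by the individual Brownian motions $(W_i,\overline{W}_i)$ through deterministic coefficients and deterministic forcing terms ($\mathbb{E}\bar{x}_0^{u_0}$, $\mathbb{E}\bar{x}_i^{u_0,u_i^*}$, $z(\cdot)$, and $\Pi(\cdot)$ are all deterministic). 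Consequently the family $\{\bar{x}_i^{u_0,u_i^*}\}_{i=1}^N$ is i.i.d., and likewise for $\{\hat{\bar{x}}_i^{u_0,u_i^*}\}_{i=1}^N$, with common mean $\mathbb{E}\bar{x}_i^{u_0,u_i^*}(t)$ by the tower property.

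Given this independence, \eqref{CC1} and \eqref{CC2} would follow directly from the classical identity
\begin{equation*}
\mathbb{E}\bigg|\frac{1}{N}\sum_{i=1}^N \big(\xi_i-\mathbb{E}\xi_i\big)\bigg|^2=\frac{1}{N}\operatorname{Var}(\xi_1)
\end{equation*}
applied pathwise at each $t$, with uniform boundedness of $\sup_t \operatorname{Var}(\bar{x}_1^{u_0,u_1^*}(t))$ and $\sup_t \operatorname{Var}(\hat{\bar{x}}_1^{u_0,u_1^*}(t))$ coming from standard $L^2$-estimates for linear SDEs together with the $L^\infty$-bounds in Assumption~\ref{A1} and the admissibility condition $\mathbb{E}\sup_t|u_0|^2<\infty$. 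A Fubini/Gronwall argument then gives the uniformity in $t$.

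For \eqref{leader approximation} and \eqref{follower approximation} I would subtract the limiting state equations from the centralized ones with the same decentralized strategy $u_i^*$ substituted in both, so that the Brownian terms and the control terms cancel and we obtain the two linear coupled equations
\begin{align*}
d(x_0^{u_0,u^*}-\bar{x}_0^{u_0})&=\big[A_0(x_0^{u_0,u^*}-\bar{x}_0^{u_0})+\bar{A}_0\,\mathbb{E}(x_0^{u_0,u^*}-\bar{x}_0^{u_0})+C_0(x^{u_0,u^*(N)}-z)\big]dt,\\
d(x_i^{u_0,u^*}-\bar{x}_i^{u_0,u_i^*})&=\big[A(x_i^{u_0,u^*}-\bar{x}_i^{u_0,u_i^*})+\bar{A}\,\mathbb{E}(x_i^{u_0,u^*}-\bar{x}_i^{u_0,u_i^*})\\
&\qquad +C(x^{u_0,u^*(N)}-z)+F\,\mathbb{E}(x_0^{u_0,u^*}-\bar{x}_0^{u_0})\big]dt,
\end{align*}
where $z(\cdot)=\mathbb{E}\bar{x}_i^{u_0,u_i^*}(\cdot)$. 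Splitting $\mathbb{E}|x^{u_0,u^*(N)}-z|^2\le 2\mathbb{E}|x^{u_0,u^*(N)}-\bar{x}^{u_0,u^*(N)}|^2+2\mathbb{E}|\bar{x}^{u_0,u^*(N)}-z|^2$, bounding the second term by \eqref{CC2} (the $O(1/N)$ LLN rate), and bounding the first by Jensen's inequality and symmetry,
\begin{equation*}
\mathbb{E}\big|x^{u_0,u^*(N)}-\bar{x}^{u_0,u^*(N)}\big|^2\le\frac{1}{N}\sum_{i=1}^N\mathbb{E}\big|x_i^{u_0,u^*}-\bar{x}_i^{u_0,u_i^*}\big|^2=\mathbb{E}\big|x_1^{u_0,u^*}-\bar{x}_1^{u_0,u_1^*}\big|^2,
\end{equation*}
I would set $\psi_N(t):=\mathbb{E}|x_0^{u_0,u^*}-\bar{x}_0^{u_0}|^2+\mathbb{E}|x_1^{u_0,u^*}-\bar{x}_1^{u_0,u_1^*}|^2$ and, after applying Cauchy--Schwarz and taking expectations in the two integrated difference equations, obtain an inequality of the form $\psi_N(t)\le C\int_0^t\psi_N(s)\,ds+O(1/N)$. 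Gronwall's lemma then yields $\sup_t\psi_N(t)=O(1/N)$, which gives \eqref{leader approximation} and, using symmetry across followers, \eqref{follower approximation}; estimate \eqref{CC3} is then immediate from the triangle inequality with \eqref{CC2}.

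The main obstacle is the apparent circularity: the centralized follower state $x_i^{u_0,u^*}$ uses the control $u_i^*$ which is defined in terms of the limiting filter $\hat{\bar{x}}_i^{u_0^*,u_i^*}$, and every $x_i^{u_0,u^*}$ couples to every other through $x^{u_0,u^*(N)}$, so the differences $x_i-\bar{x}_i$ are neither independent nor zero-mean in any useful way. The symmetry-plus-Gronwall trick above is what lets me avoid a direct LLN on the coupled differences; I only need the LLN on the uncoupled limiting processes, and then close the loop through the coupled linear system. A secondary technical point will be confirming that the coefficients arising in the feedback strategy \eqref{varipsilon Stackelberg-Nash equalibrium ui* and u0*} remain uniformly bounded in $t$, which is where the standing assumptions of Theorem~\ref{thm4.1} (the solvability from Proposition~\ref{RE solvability} plus the existence of $\Sigma_1,\Sigma_2$) enter to guarantee the boundedness of $P_1,P_2,P_3,\varphi^{u_0},\Sigma_1,\Sigma_2,\psi$ on $[0,T]$.
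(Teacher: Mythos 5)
Your proposal is correct, and it reaches the five estimates by a route that differs from the paper's in two places worth noting. For \eqref{CC1} and \eqref{CC2} the paper does not invoke the i.i.d.\ variance identity directly: it writes SDEs for the averaged quantities $\hat{\bar{x}}^{u_0,u^*(N)}-\mathbb{E}\bar{x}_i^{u_0,u_i^*}$ and $\bar{x}^{u_0,u^*(N)}-\mathbb{E}\bar{x}_i^{u_0,u_i^*}$, extracts the $O(1/N)$ rate from the quadratic variation of the averaged stochastic integral $\frac{1}{N}\sum_i\int(\bar{\sigma}+\Pi H^\top (f^\top)^{-1})\,d\overline{W}_i$ (the cross terms vanish by independence of the $\overline{W}_i$), and closes a \emph{coupled} Gronwall argument in the two quantities, since the filtering correction term $(\bar{\sigma}+\Pi H^\top (f^\top)^{-1})f^{-1}H(\bar{x}^{u_0,u^*(N)}-\hat{\bar{x}}^{u_0,u^*(N)})$ ties them together. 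Your direct appeal to $\mathbb{E}|\frac{1}{N}\sum_i(\xi_i-\mathbb{E}\xi_i)|^2=\frac{1}{N}\operatorname{Var}(\xi_1)$ is legitimate here precisely because, for fixed $u_0\in\mathcal{U}_0^d$, the forcing terms $\mathbb{E}\bar{x}_0^{u_0}$, $\mathbb{E}\bar{x}_i^{u_0,u_i^*}$, $\varphi^{u_0}$ and $\Pi$ are deterministic, so $(\bar{x}_i^{u_0,u_i^*},\hat{\bar{x}}_i^{u_0,u_i^*})$ is the same measurable functional of $(W_i,\overline{W}_i)$ for each $i$; this yields each of the first two estimates separately and is arguably cleaner, at the cost of having to verify $\sup_t\operatorname{Var}<\infty$ by standard $L^2$ bounds. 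For the remaining three estimates both arguments are coupled Gronwall schemes on the difference system with cancelled noise and control terms; the paper runs Gronwall on the pair $(x^{u_0,u^*(N)}-\mathbb{E}\bar{x}_i^{u_0,u_i^*},\,x_0^{u_0,u^*}-\bar{x}_0^{u_0})$ and obtains \eqref{CC3} and \eqref{leader approximation} first, deducing \eqref{follower approximation} afterwards, whereas you run it on $(x_0^{u_0,u^*}-\bar{x}_0^{u_0},\,x_1^{u_0,u^*}-\bar{x}_1^{u_0,u_1^*})$ using exchangeability and the Jensen bound $\mathbb{E}|x^{u_0,u^*(N)}-\bar{x}^{u_0,u^*(N)}|^2\le\mathbb{E}|x_1^{u_0,u^*}-\bar{x}_1^{u_0,u_1^*}|^2$, obtaining \eqref{follower approximation} first and \eqref{CC3} by the triangle inequality. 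The two orderings are equivalent and rest on the same ingredients (exchangeability of the followers, independence of the idiosyncratic noises, boundedness of $P_1,P_2,P_3,\varphi^{u_0},\Sigma_1,\Sigma_2,\psi$ on $[0,T]$), so your proof is a valid alternative.
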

	
	\begin{mylem}\label{left half of follower NE}
	Let Assumption \ref{A1} and the assumption of Proposition \ref{RE solvability} hold. Let $\Sigma_1(\cdot)$ and $\Sigma_2(\cdot)$ be the solutions to \eqref{Sigma1 equation} and \eqref{Sigma2 equation}, respectively. Then, we have the following estimation
		\begin{equation}\label{estimate followers cost}
			\left|\mathcal{J}_i\left(u_i^*(\cdot), u_{-i}^*(\cdot),u_0(\cdot)\right)-J_i\left(u_i^*(\cdot),u_0(\cdot)\right) \right|=O\left(\frac{1}{\sqrt{N}}\right),\quad \text{for any } u_0(\cdot) \in \mathcal{U}_{0}^{d},
		\end{equation}
		and
		\begin{equation}\label{estimate leader cost}
			\left|\mathcal{J}_0\left(u_0^*(\cdot),u^*(\cdot)\right)-J_0\left(u_0^*(\cdot)\right) \right|=O\left(\frac{1}{\sqrt{N}}\right).
		\end{equation}
	\end{mylem}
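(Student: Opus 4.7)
The plan is to exploit the quadratic-affine structure of the cost functionals together with the $L^2$-approximation estimates from Lemma \ref{CC lemma}. First I would establish a uniform second-moment bound $\sup_{0\le t\le T}\mathbb{E}\big[|\bar{x}^{u_0,u_i^*}_i(t)|^2+|\bar{x}^{u_0}_0(t)|^2\big]<\infty$ for the limiting states via standard SDE estimates under Assumption \ref{A1} and the feedback representations from Theorems \ref{theorem feedback of follower}--\ref{theorem feedback of leader}; combined with (\ref{leader approximation})--(\ref{follower approximation}), the same uniform bound transfers to the centralized states $x^{u_0,u^*}_i(\cdot)$ and $x^{u_0,u^*}_0(\cdot)$. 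These $L^2$-bounds will serve as the ``large-factor'' estimates in what follows.

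For (\ref{estimate followers cost}), I would write the integrand and terminal of $\mathcal{J}_i(u_i^*,u_{-i}^*,u_0)-J_i(u_i^*,u_0)$ as a sum of differences of quadratic forms $\|\alpha\|_Q^2-\|\beta\|_Q^2$ plus affine cross terms $\langle S(\alpha-\beta),u_i^*\rangle$, where $(\alpha,\beta)$ runs over pairs such as $(x_i^{u_0,u^*}, \bar{x}_i^{u_0,u_i^*})$, $(x_0^{u_0,u^*}, \bar{x}_0^{u_0})$, $({x^{u_0,u^*}}^{(N)}, \mathbb{E}\bar{x}_i^{u_0,u_i^*})$, and their expectations. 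Apply the polarization identity $\|\alpha\|_Q^2-\|\beta\|_Q^2=\langle Q(\alpha-\beta),\alpha+\beta\rangle$, bound by $\|Q\|_\infty\,|\alpha-\beta|(|\alpha|+|\beta|)$, and then Cauchy--Schwarz yields $\mathbb{E}[|\alpha-\beta|(|\alpha|+|\beta|)]\le(\mathbb{E}|\alpha-\beta|^2)^{1/2}(\mathbb{E}(|\alpha|+|\beta|)^2)^{1/2}$. The second factor is uniformly bounded by the preliminary step; the first factor is exactly $O(N^{-1/2})$ by Lemma \ref{CC lemma}, using (\ref{follower approximation}) for the individual follower state, (\ref{leader approximation}) for the leader state, and (\ref{CC3}) for the empirical average. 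The affine cross terms $\langle S(\alpha-\beta),u_i^*\rangle$ are handled the same way, using $u_i^*\in L^2_{\mathcal{F}_t^{Y}}$ (whose second moment is finite thanks to the explicit feedback formula \eqref{follower feedback strategy}). For expectation terms such as $\mathbb{E}x_0^{u_0,u^*}-\mathbb{E}\bar{x}_0^{u_0}$, Jensen's inequality reduces these to the pointwise estimates already handled. Integrating in $t$ over $[0,T]$ and adding the terminal contribution preserves the $O(N^{-1/2})$ rate.

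The proof of (\ref{estimate leader cost}) is structurally identical, but applied to $\mathcal{J}_0(u_0^*,u^*)-J_0(u_0^*)$, whose integrand and terminal only involve the pairs $(x_0^{u_0^*,u^*},\bar{x}_0^{u_0^*})$, $({x^{u_0^*,u^*}}^{(N)},\mathbb{E}\bar{x}_i^{u_0^*,u_i^*})$, and their expectations. The relevant approximation bounds are again (\ref{leader approximation}) and (\ref{CC3}), and the same polarization/Cauchy--Schwarz argument gives the $O(N^{-1/2})$ estimate.

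The main obstacle is not analytical but organizational: the cost functionals \eqref{leader cost}--\eqref{follower cost} and \eqref{leader limiting cost}--\eqref{follower limiting cost} contain many terms involving both the individual state, the leader's state, the empirical average, and their expectations, each of which must be paired with the correct estimate from Lemma \ref{CC lemma}. Once the bookkeeping is carefully set up, the argument reduces to repeated applications of the polarization identity, Cauchy--Schwarz and Jensen's inequality, the uniform second-moment bound, and the five estimates (\ref{CC1})--(\ref{follower approximation}); no new analytical tool is required beyond those already collected.
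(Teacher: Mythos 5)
Your proposal is correct and follows essentially the same route as the paper: the paper expands each squared difference as $\|\alpha\|_Q^2-\|\beta\|_Q^2=\|\alpha-\beta\|_Q^2+2\langle Q(\alpha-\beta),\beta\rangle$, which is algebraically the same as your polarization identity, and then applies Cauchy--Schwarz together with the estimates \eqref{CC3}, \eqref{leader approximation}, \eqref{follower approximation} of Lemma \ref{CC lemma} to obtain the $O(1/\sqrt{N})$ rate, treating the $S$-cross terms and the terminal terms in the same way. Your explicit preliminary step of recording the uniform second-moment bounds on the limiting states is left implicit in the paper but is indeed needed for the Cauchy--Schwarz factor to be bounded.
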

	
	Now, let us consider the perturbed controls of the followers. For any $u_0(\cdot) \in \mathcal{U}_0^d$, suppose the $i$-th follower $\mathcal{A}_i$ takes the value of $v_i(\cdot)\in \mathcal{U}_i^d$, while $\mathcal{A}_j(j \neq i)$ still takes $u^*_j(\cdot)$. Then, the centralized state equation for the perturbation of $\mathcal{A}_0$ is
	\begin{equation}\label{m_0 leader state perturbation}
		\left\{\begin{aligned}
			d m_0(t)= & \left[A_0(t) m_0(t)+B_0(t) u_0(t)+\bar{A}_0(t) \mathbb{E}m_0(t)+C_0(t) m^{(N)}(t)+b_0(t)\right] d t \\
			&+\sigma_0(t) d W_0(t)+\bar{\sigma}_0(t) d \overline{W}_0(t), \\
			m_0(0)= &\ \xi_0,
		\end{aligned}\right.
	\end{equation}
	and the centralized state $m_i(\cdot)$ of the follower $\mathcal{A}_i$ is given by the following linear $\mathrm{SDE}$:
	\begin{equation}\label{m_i follower state perturbation}
		\left\{\begin{aligned}
			d m_i(t) =& \left[A(t) m_i(t)+B(t) v_i(t)+\bar{A}(t) \mathbb{E}m_i(t)+C(t) m^{(N)}(t)+F(t) \mathbb{E}m_0(t)+b(t)\right] d t\\
			& +\sigma(t) d W_i(t)+\bar{\sigma}(t) d \overline{W}_i(t), \\
			m_i(0) =&\ \xi\,,
		\end{aligned}\right.
	\end{equation}
	and the centralized state equation for the $j$-th follower $\mathcal{A}_j$ is the following equation
	\begin{equation}\label{m_j perturbation}
		\left\{\begin{aligned}
			d m_j(t) =& \left[A(t) m_j(t)+B(t) u^*_j(t)+\bar{A}(t) \mathbb{E}m_j(t)+C(t) m^{(N)}(t)+F(t) \mathbb{E}m_0(t)+b(t)\right] d t\\
			& +\sigma(t) d W_j(t)+\bar{\sigma}(t) d \overline{W}_j(t), \\
			m_j(0) =&\ \xi,
		\end{aligned}\right.
	\end{equation}
	where $m_0(\cdot)=x_0^{u_0,u_{-i}^*,v_i}(\cdot)$, $m_i(\cdot)=x_i^{u_0,u_{-i}^*,v_i}(\cdot)$,  $m_j(\cdot)=x_j^{u_0,u_{-i}^*,v_i}(\cdot)(j\neq i)$,  $m^{(N)}(\cdot)=\frac{1}{N}\sum_{k=1}^{N}m_k(\cdot)$  for notational simplicity. 
	Then, the decentralized state equation of the follower $\mathcal{A}_i$ is given by the following linear $\mathrm{SDE}$ (Here, we set $\bar{m}_i(\cdot)=\bar{x}_i^{u_0,v_i}(\cdot)$ for notational simplicity):
	\begin{equation}\label{bar m_i follower state perturbation limiting}
		\left\{\begin{aligned}
			d \bar{m}_i(t) =& \big[A(t) \bar{m}_i(t)+B(t) v_i(t)+\bar{A}(t) \mathbb{E}\bar{m}_i(t)+C(t) \mathbb{E} \bar{x}^{u_0,u^*_i}(t)+F(t) \mathbb{E}\bar{x}_0^{u_0}(t)\\
			&+b(t)\big] d t +\sigma(t) d W_i(t)+\bar{\sigma}(t) d \overline{W}_i(t), \\
			\bar{m}_i(0) =&\ \xi\,.
		\end{aligned}\right.
	\end{equation}
	
	By the Definition \ref{varipsilon Stackelberg-Nash equalibrium} (i) of $\varepsilon$-Stackelberg-Nash equilibria, we need to show
	$$
	\mathcal{J}_i\left(u^*_i(\cdot), u^*_{-i}(\cdot)\right) \leq \inf _{u_i(\cdot) \in\ \mathcal{U}_i^d} \mathcal{J}_i\left(u_i(\cdot), u^*_{-i}(\cdot)\right)+\varepsilon .
	$$
	Therefore, we only need to consider $v_i(\cdot)$ satisfies $\mathcal{J}_i\left(v_i(\cdot), u^*_{-i}(\cdot)\right) \leq \mathcal{J}_i\left(u^*_i(\cdot), u^*_{-i}(\cdot)\right)$. And then,
	$$
	\mathbb{E} \int_0^T \left\|v_i(t)\right\|_{R}^2 d t \leq \mathcal{J}_i\left(v_i(\cdot), u^*_{-i}(\cdot)\right) \leq \mathcal{J}_i\left(u^*_i(\cdot), u^*_{-i}(\cdot)\right) \leq J_i\left(u^*_i(\cdot)\right)+O\left(\frac{1}{\sqrt{N}}\right),
	$$
	i.e.,
	$$
	\mathbb{E} \int_0^T\left|v_i(t)\right|^2 d t \leq K .
	$$
	
	\begin{mylem}\label{follower perturbed CC lemma}
		Let Assumption \ref{A1} and the assumption of Proposition \ref{RE solvability} hold. Let $\Sigma_1(\cdot)$ and $\Sigma_2(\cdot)$ be the solutions to \eqref{Sigma1 equation} and \eqref{Sigma2 equation}, respectively. Then, we have the following estimations:
		\begin{equation}\label{follower perturbed CC 1}
			\sup _{0 \leq t \leq T}\mathbb{E} \left[\left|m^{(N)}(t)- \mathbb{E}\bar{x}_i^{u_0,u^*_i}(t)\right|^2\right] =O\left(\frac{1}{N}\right),
		\end{equation}
		\begin{equation}\label{follower perturbed CC 2}
			\sup _{0 \leq t \leq T}\mathbb{E} \left[\left|m_0(t)-\bar{x}_0^{u_0}(t)\right|^2\right] =O\left(\frac{1}{N}\right),
		\end{equation}
		\begin{equation}\label{follower perturbed CC 3}
			\sup _{1 \leq i \leq N}\left[\sup _{0 \leq t \leq T} \mathbb{E} \left|m_{i}(t)-\bar{m}_{i}(t)\right|^2\right] =O\left(\frac{1}{N}\right).
		\end{equation}
	\end{mylem}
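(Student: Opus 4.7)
\textbf{Proof proposal for Lemma \ref{follower perturbed CC lemma}.} The plan is to adapt the proof of Lemma \ref{CC lemma} to the one-agent perturbation regime, exploiting that $v_i$ differs from $u_i^*$ only in a single coordinate whose contribution to the empirical average scales like $1/N$. Throughout, I fix $u_0(\cdot)\in\mathcal{U}_0^d$ and the perturbed follower index $i$, and introduce the difference processes
$$\Delta_0 := m_0 - \bar{x}_0^{u_0}, \qquad \Delta_i := m_i - \bar{m}_i, \qquad \Delta_k := m_k - \bar{x}_k^{u_0,u_k^*} \text{ for } k\neq i.$$
Subtracting the limiting equations from \eqref{m_0 leader state perturbation}--\eqref{bar m_i follower state perturbation limiting}, each $\Delta_\bullet$ satisfies a linear SDE with no diffusion part, whose drift is a bounded linear combination of $\Delta_\bullet$, $\mathbb{E}\Delta_0$, and the crucial coupling term $m^{(N)}-\mathbb{E}\bar{x}_i^{u_0,u_i^*}$ (plus, for $\Delta_k$ and $\Delta_i$, an extra $F(\mathbb{E}m_0-\mathbb{E}\bar{x}_0^{u_0})=F\mathbb{E}\Delta_0$ contribution).

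The central step is to estimate the empirical deviation $m^{(N)}-\mathbb{E}\bar{x}_i^{u_0,u_i^*}$. I would split it as
$$m^{(N)}-\mathbb{E}\bar{x}_i^{u_0,u_i^*} = \frac{1}{N}\sum_{k=1}^{N}\Delta_k + \Bigl[\tfrac{1}{N}\bar{m}_i + \tfrac{1}{N}\sum_{k\neq i}\bar{x}_k^{u_0,u_k^*} - \mathbb{E}\bar{x}_i^{u_0,u_i^*}\Bigr].$$
For the bracketed fluctuation term, the processes $\{\bar{x}_k^{u_0,u_k^*}\}_{k\neq i}$ are i.i.d.\ with common mean $\mathbb{E}\bar{x}_i^{u_0,u_i^*}$, so the strong law of large numbers together with a direct variance calculation yields $\mathbb{E}|\cdot|^2=O(1/N)$; the isolated $\bar{m}_i/N$ piece contributes only $O(1/N^2)$ thanks to the a priori bound $\mathbb{E}\int_0^T|v_i|^2dt\leq K$ that was derived just before the lemma, which gives $\sup_t\mathbb{E}|\bar{m}_i(t)|^2\leq K'$ via the linear SDE estimate. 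Consequently
$$\sup_{0\leq t\leq T}\mathbb{E}\bigl|m^{(N)}(t)-\mathbb{E}\bar{x}_i^{u_0,u_i^*}(t)\bigr|^2 \leq C\sup_{0\leq t\leq T}\frac{1}{N}\sum_{k=1}^{N}\mathbb{E}|\Delta_k(t)|^2 + O(1/N),$$
after applying Cauchy--Schwarz to $\frac{1}{N}\sum_k\Delta_k$.

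I then square the integral form of the $\Delta_k$-equation, apply Cauchy--Schwarz and take expectations, obtaining
$$\mathbb{E}|\Delta_k(t)|^2 \leq C\int_0^t\Bigl[\mathbb{E}|\Delta_k(s)|^2 + \mathbb{E}|\Delta_0(s)|^2 + \tfrac{1}{N}\sum_{\ell=1}^{N}\mathbb{E}|\Delta_\ell(s)|^2 + 1/N\Bigr]ds,$$
and an analogous inequality holds for $\Delta_0$ (through $C_0(m^{(N)}-\mathbb{E}\bar{x}_i^{u_0,u_i^*})$) and for $\Delta_i$. Averaging over $k$ and adding the $\Delta_0$ estimate gives a scalar Gronwall inequality for $\Phi(t):=\mathbb{E}|\Delta_0(t)|^2+\frac{1}{N}\sum_k\mathbb{E}|\Delta_k(t)|^2$ of the form $\Phi(t)\leq O(1/N)+C\int_0^t\Phi(s)ds$, yielding $\Phi(t)=O(1/N)$ uniformly on $[0,T]$. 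This proves \eqref{follower perturbed CC 2} and, once plugged into the $m^{(N)}$ decomposition, also \eqref{follower perturbed CC 1}. Finally, \eqref{follower perturbed CC 3} follows by inserting the bounds on $\mathbb{E}|\Delta_0|^2$ and on $m^{(N)}-\mathbb{E}\bar{x}_i^{u_0,u_i^*}$ back into the individual equation for $\Delta_i$ (and for each $\Delta_k$), then a one-more Gronwall.

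The main obstacle is the self-referential nature of the bound: $\Delta_k$ appears on both sides through the empirical average $\frac{1}{N}\sum_\ell\Delta_\ell$. The Cauchy--Schwarz aggregation trick that turns $|\frac{1}{N}\sum_\ell\Delta_\ell|^2$ into $\frac{1}{N}\sum_\ell|\Delta_\ell|^2$, combined with averaging over $k$ before invoking Gronwall, is what unlocks the argument. The i.i.d.\ SLLN control of the fluctuation of $\frac{1}{N}\sum_{k\neq i}\bar{x}_k^{u_0,u_k^*}$ --- which relies on the mutual independence and identical distribution of the $\bar{x}_k^{u_0,u_k^*}$ for $k\neq i$ (already used in \eqref{SLLN}) --- supplies the $O(1/N)$ inhomogeneous term needed for Gronwall to produce the rate, while the isolated perturbed coordinate $\bar{m}_i$ is harmless because it enters weighted by $1/N$.
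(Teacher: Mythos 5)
Your proposal is correct, but it takes a genuinely different route from the paper's. You compare the perturbed centralized system directly with the limiting decentralized system, which obliges you to re-derive the $O(1/N)$ fluctuation bound for $\frac{1}{N}\sum_{k\neq i}\bar{x}_k^{u_0,u_k^*}-\mathbb{E}\bar{x}_i^{u_0,u_i^*}$ from the independence of the limiting states and to run a coupled Gronwall argument on the aggregated quantity $\Phi(t)=\mathbb{E}|\Delta_0(t)|^2+\frac{1}{N}\sum_k\mathbb{E}|\Delta_k(t)|^2$. The paper instead inserts the intermediate, unperturbed centralized system $x^{u_0,u^*}$ as a pivot: the differences $m^{(N)}-x^{u_0,u^*(N)}$ and $m_0-x_0^{u_0,u^*}$ satisfy noise-free linear equations whose only inhomogeneity is the single-coordinate term $\frac{B}{N}\left(v_i-u_i^*\right)$, of squared $L^2$-norm $O(1/N^2)$ by the a priori bound on $v_i$; a two-component Gronwall inequality then controls these differences, and \eqref{follower perturbed CC 1}--\eqref{follower perturbed CC 3} follow from the triangle inequality combined with the already-established estimates \eqref{CC3} and \eqref{leader approximation} of Lemma \ref{CC lemma}. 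The paper's route is shorter because it delegates all the law-of-large-numbers work to Lemma \ref{CC lemma}; yours is self-contained but duplicates that fluctuation analysis. Two minor points on your write-up: the $O(1/N)$ rate comes from the variance computation for the independent, identically distributed family $\{\bar{x}_k^{u_0,u_k^*}\}_{k\neq i}$ rather than from the strong law of large numbers itself (which gives only almost-sure convergence without a rate), and your aggregation step should be stated as the elementary inequality $\bigl|\frac{1}{N}\sum_\ell\Delta_\ell\bigr|^2\leq\frac{1}{N}\sum_\ell|\Delta_\ell|^2$ (Jensen/Cauchy--Schwarz), which you do invoke; both are cosmetic and do not affect the validity of the argument.
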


Its proof is postponed to the Appendix.
	
	\begin{mylem}\label{right half of follower NE}
		Let Assumption \ref{A1} and the assumption of Proposition \ref{RE solvability} hold. Let $\Sigma_1(\cdot)$ and $\Sigma_2(\cdot)$ be the solutions to \eqref{Sigma1 equation} and \eqref{Sigma2 equation}, respectively. Then, we have the following estimations:
		\begin{equation}
			\left|\mathcal{J}_i\left(v_i(\cdot), u_{-i}^*(\cdot)\right)-J_i\left(v_i(\cdot)\right) \right|=O\left(\frac{1}{\sqrt{N}}\right).
		\end{equation}
	\end{mylem}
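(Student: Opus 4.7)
The plan is to mimic the argument that was used for Lemma \ref{left half of follower NE}, with the perturbed control $v_i(\cdot) \in \mathcal{U}_i^d$ in place of $u_i^*(\cdot)$ for the $i$-th follower while the other followers retain $u^*_{-i}(\cdot)$ and the leader uses $u_0(\cdot)$. First I would expand $\mathcal{J}_i(v_i,u_{-i}^*) - J_i(v_i)$ directly from the definitions \eqref{follower cost} and \eqref{follower limiting cost}. Because $v_i$ appears identically in both functionals, the term $\|v_i\|_R^2$ cancels, and the entire difference is driven by replacing the centralized processes $(m_0,m_i,m^{(N)})$ governed by \eqref{m_0 leader state perturbation}--\eqref{m_j perturbation} with their limiting counterparts $(\bar{x}_0^{u_0},\bar{m}_i,\mathbb{E}\bar{x}_i^{u_0,u_i^*})$.

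Next I would split the difference into (i) squared-norm blocks $\|\cdot\|_{Q}^2$ and the terminal $\|\cdot\|_G^2$, and (ii) the cross term $\langle S(m_i-\bar{\Gamma}_2\mathbb{E}m_i),v_i\rangle$ minus its limiting analogue. For each squared-norm block I would use the polarization identity
\begin{equation*}
\|a\|_M^2 - \|b\|_M^2 = \langle M(a+b),\, a-b\rangle
\end{equation*}
to reduce the quadratic discrepancy to an inner product between a uniformly $L^2$-bounded process (a sum of two admissible-state-type processes) and one of the differences $m_0-\bar{x}_0^{u_0}$, $m_i-\bar{m}_i$, or $m^{(N)}-\mathbb{E}\bar{x}_i^{u_0,u_i^*}$. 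For the cross term containing $S$, Cauchy--Schwarz is applied directly against $v_i$, which is uniformly $L^2$-bounded thanks to the a priori estimate $\mathbb{E}\int_0^T|v_i|^2dt \leq K$ derived just before the lemma statement.

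The heart of the argument is then to combine these identities with the $L^2$-rate estimates \eqref{follower perturbed CC 1}--\eqref{follower perturbed CC 3} of Lemma \ref{follower perturbed CC lemma}, together with the standard a priori bounds on $m_i,\,m_0,\,m^{(N)},\,\bar{m}_i,\,\bar{x}_0^{u_0},\,\mathbb{E}\bar{x}_i^{u_0,u_i^*}$ which follow from Assumption \ref{A1}, the square-integrability of the initial data, and the uniform bound on $v_i$. Each resulting cross product is handled by
\begin{equation*}
\mathbb{E}\int_0^T\!\!\langle M(t)\alpha(t),\beta(t)\rangle dt \leq \left(\mathbb{E}\!\int_0^T\!|\alpha|^2 dt\right)^{\!\!1/2}\!\!\left(\mathbb{E}\!\int_0^T\!|\beta|^2 dt\right)^{\!\!1/2} = O(1)\cdot O(N^{-1/2}),
\end{equation*}
and the terminal contributions at $t=T$ are treated identically because Lemma \ref{follower perturbed CC lemma} is a $\sup_{0\leq t\leq T}$ estimate. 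Summing the finitely many resulting terms yields the $O(N^{-1/2})$ bound.

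The main obstacle is the sheer amount of bookkeeping: the follower's cost mixes the idiosyncratic state, the state-average, the leader's state, and their expectations, and the perturbed dynamics further couple $m_i$ to $m^{(N)}$ and $m_0$. The cleanest path is to substitute each centralized quantity by its limiting counterpart one block at a time, invoking the triangle inequality after each substitution so that one never has to control the product of two differences simultaneously. No new idea beyond Lemma \ref{left half of follower NE} is required; the only structural difference is that $v_i$ is a generic admissible perturbation rather than the optimal $u_i^*$, which is why we must rely on the uniform $L^2$ bound on $v_i$ rather than on any feedback representation.
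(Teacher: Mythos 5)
Your proposal is correct and follows essentially the same route as the paper: expand $\mathcal{J}_i(v_i,u_{-i}^*)-J_i(v_i)$ using the quadratic structure of the cost (the paper writes each block as $\|a-b\|_M^2+2\langle M(a-b),b\rangle$, algebraically the same as your polarization identity), then bound every resulting term by Cauchy--Schwarz combined with the rate estimates \eqref{follower perturbed CC 1}--\eqref{follower perturbed CC 3} of Lemma \ref{follower perturbed CC lemma} and the uniform $L^2$ bound on $v_i$. No substantive difference in method or in the lemmas invoked.
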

	
    The proof is also left to the Appendix.

	The proof about (i) of the Definition \ref{varipsilon Stackelberg-Nash equalibrium} of $\epsilon$-Stackelberg-Nash equilibrium is given below. 
	\begin{proof}
		Applying Lemma \ref{left half of follower NE} and Lemma \ref{right half of follower NE}, we obtain
		\begin{equation*}
			\mathcal{J}_i\left(u^*_i(\cdot);u^*_{-i}(\cdot)\right) \leq J_i\left(u^*_i(\cdot)\right)+O\left(\frac{1}{\sqrt{N}}\right) \leq  J_i\left(v_i(\cdot)\right)+O\left(\frac{1}{\sqrt{N}}\right) \leq \mathcal{J}_i\left(v_i(\cdot);u^*_{-i}(\cdot)\right)+O\left(\frac{1}{\sqrt{N}}\right).
		\end{equation*}
   The proof is complete.
	\end{proof}
	
	Next, let us consider a perturbed control $v_0(\cdot)\in \mathcal{U}_0^d$ for the leader $\mathcal{A}_0$, the corresponding centralized state equation is
	\begin{equation}\label{tilde m_0 leader state perturbation}
		\left\{\begin{aligned}
			d \tilde{m}_0(t)= & \left[A_0(t) \tilde{m}_0(t)+B_0(t) v_0(t)+\bar{A}_0(t) \mathbb{E}\tilde{m}_0(t)+C_0(t) x^{v_0,u^*(N)}(t)+b_0(t)\right] d t  \\
			&+\sigma_0(t) d W_0(t)+\bar{\sigma}_0(t) d \overline{W}_0(t) , \\
			\tilde{m}_0(0)= &\ \xi_0,
		\end{aligned}\right.
	\end{equation}
	and the corresponding decentralized state equation is
	\begin{equation}\label{bar m_0 leader state perturbation}
		\left\{\begin{aligned}
			d \bar{m}_0(t)= & \left[A_0(t) \bar{m}_0(t)+B_0(t) v_0(t)+\bar{A}_0(t) \mathbb{E}\bar{m}_0(t)+C_0(t) \mathbb{E}\bar{x}_i^{v_0,u^*_i}(t)+b_0(t)\right] d t  \\
			&+\sigma_0(t) d W_0(t)+\bar{\sigma}_0(t) d \overline{W}_0(t)  , \\
			\bar{m}_0(0)= &\ \xi_0,
		\end{aligned}\right.
	\end{equation}
	where $\tilde{m}_0(\cdot)=x_0^{v_0,u^*}$, $\bar{m}_0(\cdot)=\bar{x}_0^{v_0}$. 
	By the Definition \ref{varipsilon Stackelberg-Nash equalibrium} (ii) of $\varepsilon$-Stackelberg-Nash equilibria, we need to show
	$$
	\mathcal{J}_0\left(u^*_0(\cdot)\right) \leq \inf _{u_0(\cdot) \in\, \mathcal{U}_{0}^{d}} \mathcal{J}_0\left(u_0(\cdot)\right)+\varepsilon.
	$$
	Therefore, we only need to consider $v_0(\cdot)$ satisfies $\mathcal{J}_0\left(v_0(\cdot)\right) \leq \mathcal{J}_0\left(u^*_0(\cdot)\right)$. And then,
	$$
	\mathbb{E} \int_0^T R_0 {v_0}^2(t) d t \leqslant \mathcal{J}_0\left(v_0(\cdot)\right) \leq \mathcal{J}_0\left(u^*_0(\cdot)\right) \leq J_0\left(u^*_0(\cdot)\right)+O\left(\frac{1}{\sqrt{N}}\right) ,
	$$
	i.e.,
	$$
	\mathbb{E} \int_0^T\left|v_0(t)\right|^2 d t \leqslant K .
	$$
	
	\begin{mylem}\label{leader perturbed CC lemma}		
		Let Assumption \ref{A1} and the assumption of Proposition \ref{RE solvability} hold. Let $\Sigma_1(\cdot)$ and $\Sigma_2(\cdot)$ be the solutions to \eqref{Sigma1 equation} and \eqref{Sigma2 equation}, respectively. Then, we have the following estimation:
		\begin{equation}\label{leader perturbed CC}
			\sup _{0 \leq t \leq T}\mathbb{E} \left[\left|\tilde{m}_{0}(t)-\bar{m}_{0}(t)\right|^2\right] =O\left(\frac{1}{N}\right) .
		\end{equation}
	\end{mylem}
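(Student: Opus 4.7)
The plan is to reduce this estimate to the corresponding follower-side average estimate \eqref{CC3} that has already been established in Lemma \ref{CC lemma}. The key observation is that $\tilde{m}_0$ and $\bar{m}_0$ share exactly the same diffusion terms $\sigma_0 dW_0+\bar{\sigma}_0 d\overline{W}_0$, the same control term $B_0 v_0$, and the same drift offset $b_0$; the only structural difference between \eqref{tilde m_0 leader state perturbation} and \eqref{bar m_0 leader state perturbation} lies in the coupling term, where the centralized equation carries $C_0(t) x^{v_0,u^*(N)}(t)$ while the decentralized one carries $C_0(t)\mathbb{E}\bar{x}_i^{v_0,u_i^*}(t)$.

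Accordingly, I would first subtract the two equations. Setting $\delta_0(t) := \tilde{m}_0(t) - \bar{m}_0(t)$, all Brownian integrals and deterministic/control contributions cancel, leaving the purely drift-driven ODE
\begin{equation*}
d\delta_0(t) = \bigl[A_0(t)\delta_0(t) + \bar{A}_0(t)\mathbb{E}\delta_0(t) + C_0(t)\bigl(x^{v_0,u^*(N)}(t) - \mathbb{E}\bar{x}_i^{v_0,u_i^*}(t)\bigr)\bigr] dt, \quad \delta_0(0)=0.
\end{equation*}
Next, I would square both sides, take expectation, apply Cauchy--Schwarz together with $|\mathbb{E}\delta_0|^2\leq \mathbb{E}|\delta_0|^2$, and use the $L^\infty$ bounds on $A_0, \bar{A}_0, C_0$ from Assumption \ref{A1}. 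Gronwall's inequality then yields
\begin{equation*}
\sup_{0\leq t\leq T}\mathbb{E}|\delta_0(t)|^2 \;\leq\; K \sup_{0\leq s\leq T}\mathbb{E}\bigl|x^{v_0,u^*(N)}(s)-\mathbb{E}\bar{x}_i^{v_0,u_i^*}(s)\bigr|^2,
\end{equation*}
for a constant $K$ depending only on $T$ and the sup-norms of the coefficients, in particular independent of the particular admissible choice $v_0$.

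Since $v_0 \in \mathcal{U}_0^d$ is itself an admissible leader control, Lemma \ref{CC lemma} applies verbatim with $u_0$ replaced by $v_0$; in particular estimate \eqref{CC3} bounds the right-hand side by $O(1/N)$, which is exactly \eqref{leader perturbed CC}. There is no genuine technical obstacle here: the lemma is in essence a direct corollary of \eqref{CC3}. The only point that requires care is notational consistency, namely confirming that the symbol $u^*$ appearing in $x^{v_0,u^*(N)}$ and in $\mathbb{E}\bar{x}_i^{v_0,u_i^*}$ denotes the followers' optimal responses to the perturbed leader control $v_0$ (i.e.\ $u_i^* = u_i^*(\cdot;v_0)$), so that the invocation of Lemma \ref{CC lemma} with $u_0 = v_0$ is legitimate and the uniform-in-$v_0$ constant $K$ is harmless.
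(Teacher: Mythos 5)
Your proposal is correct and is essentially the paper's own argument: the paper proves this lemma in one line by applying Lemma \ref{CC lemma} with $u_0=v_0$, under which $\tilde m_0=x_0^{v_0,u^*}$ and $\bar m_0=\bar x_0^{v_0}$, so \eqref{leader perturbed CC} is exactly \eqref{leader approximation}. Your explicit subtraction–Gronwall reduction to \eqref{CC3} simply unpacks how \eqref{leader approximation} is itself obtained inside Lemma \ref{CC lemma}, so there is no substantive difference.
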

	\begin{proof}
		Using Lemma \ref{CC lemma},  let $u_0=v_0$, and the proof is complete.
	\end{proof}
	
	\begin{mylem}\label{right half of leader NE}
		Let Assumption \ref{A1} and the assumption of Proposition \ref{RE solvability} hold. Let $\Sigma_1(\cdot)$ and $\Sigma_2(\cdot)$ be the solutions to \eqref{Sigma1 equation} and \eqref{Sigma2 equation}, respectively. Then, we have the following estimation:
		\begin{equation}\label{right half of leader NE_0}
			\left|\mathcal{J}_0\left(v_0(\cdot)\right)-J_0\left(v_0(\cdot)\right) \right|=O\left(\frac{1}{\sqrt{N}}\right).
		\end{equation}
	\end{mylem}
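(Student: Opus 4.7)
The plan is to mirror the argument used for the leader's cost estimate \eqref{estimate leader cost} in Lemma \ref{left half of follower NE}, now with the perturbed control $v_0(\cdot)$ replacing $u_0^*(\cdot)$. Explicitly, I would write $\mathcal{J}_0(v_0(\cdot))$ using the centralized dynamics \eqref{tilde m_0 leader state perturbation} together with the perturbed empirical average $x^{v_0,u^*(N)}(\cdot):=\frac{1}{N}\sum_{i=1}^N x_i^{v_0,u^*}(\cdot)$, and $J_0(v_0(\cdot))$ using the decentralized dynamics \eqref{bar m_0 leader state perturbation} with the mean-field quantity $\mathbb{E}\bar{x}_i^{v_0,u_i^*}(\cdot)$ in place of the empirical average. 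Subtracting termwise then gives a sum of expressions of the form $\|X\|_M^2-\|Y\|_M^2$ (both for running and terminal costs) and cross terms linear in $v_0$.

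The main technical tool is the elementary identity
\begin{equation*}
\|X\|_M^2-\|Y\|_M^2=\langle M(X+Y),X-Y\rangle,
\end{equation*}
combined with the Cauchy--Schwarz inequality. For each term this reduces the estimate to a product of an $L^2$-bounded factor (namely $X+Y$, or $v_0$, which is bounded in $L^2$ thanks to the standing constraint $\mathbb{E}\int_0^T|v_0|^2 dt\leq K$ derived before the lemma, together with standard Gronwall bounds on $\tilde m_0$ and $\bar m_0$) and a small factor $X-Y$ whose $L^2$-norm is $O(1/\sqrt{N})$. The latter is supplied by Lemma \ref{leader perturbed CC lemma} for the difference $\tilde m_0-\bar m_0$, and by the same argument used to prove \eqref{CC3} in Lemma \ref{CC lemma} (applied with $v_0$ in place of $u_0$) for the difference $x^{v_0,u^*(N)}-\mathbb{E}\bar{x}_i^{v_0,u_i^*}$. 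The cross term involving $S_0$ and $v_0$ is handled by the same Cauchy--Schwarz split, producing an extra $\mathbb{E}\int_0^T|v_0|^2dt$ factor which is uniformly bounded.

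The expected main obstacle is bookkeeping rather than a substantive new difficulty: there are six distinct occurrences in the integrand and the terminal cost where the centralized quantities $\tilde m_0$ and $x^{v_0,u^*(N)}$ must be swapped for $\bar m_0$ and $\mathbb{E}\bar{x}_i^{v_0,u_i^*}$, and each swap must be absorbed into the sum via the identity above. One subtle point is that the requisite analog of Lemma \ref{CC lemma} for the perturbed leader control was not separately stated; however, the proof is structurally identical because the follower equations involve only $\mathbb{E}\bar{x}_0^{v_0}$ and the followers' own optimal feedback laws \eqref{varipsilon Stackelberg-Nash equalibrium ui* and u0*}, so the same Gronwall--strong-law-of-large-numbers argument yields $\sup_{0\leq t\leq T}\mathbb{E}|x^{v_0,u^*(N)}(t)-\mathbb{E}\bar{x}_i^{v_0,u_i^*}(t)|^2=O(1/N)$ and $\sup_{1\leq i\leq N}\sup_{0\leq t\leq T}\mathbb{E}|x_i^{v_0,u^*}-\bar{x}_i^{v_0,u_i^*}|^2=O(1/N)$. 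Assembling all the bounded $L^2$ factors with their $O(1/\sqrt{N})$ counterparts and summing yields \eqref{right half of leader NE_0}.
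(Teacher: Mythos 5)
Your proposal is correct and follows essentially the same route as the paper, whose proof is a one-line appeal to the method of Lemma \ref{left half of follower NE} combined with the state approximation estimates of Lemma \ref{CC lemma} (which, being stated for arbitrary $u_0(\cdot)\in\mathcal{U}_0^d$, applies directly with $u_0=v_0$, so the analog you worried about needs no separate proof) and Lemma \ref{leader perturbed CC lemma}. Your explicit use of the identity $\|X\|_M^2-\|Y\|_M^2=\langle M(X+Y),X-Y\rangle$ with Cauchy--Schwarz and the a priori $L^2$ bound on $v_0$ is exactly the bookkeeping the paper leaves implicit.
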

	\begin{proof}
		Using similar method as Lemma \ref{left half of follower NE}, we can prove \eqref{right half of leader NE_0} by (\ref{CC2}) and (\ref{leader perturbed CC}). The proof is complete.
	\end{proof}
	
	The proof about (ii) of the Definition \ref{varipsilon Stackelberg-Nash equalibrium} of $\epsilon$-Stackelberg-Nash equilibrium is given below. 
	
	\begin{proof}
		Applying Lemma \ref{left half of follower NE} and Lemma \ref{right half of leader NE}, we obtain
		\begin{equation*}
			\mathcal{J}_0\left(u^*_0(\cdot)\right) \leq J_0\left(u^*_0(\cdot)\right)+O\left(\frac{1}{\sqrt{N}}\right) \leq  J_0\left(v_0(\cdot)\right)+O\left(\frac{1}{\sqrt{N}}\right) \leq \mathcal{J}_0\left(v_0(\cdot)\right)+O\left(\frac{1}{\sqrt{N}}\right).
		\end{equation*}
        The proof is complete.
	\end{proof}
	Up to this point, we have proven that the strategy (\ref{varipsilon Stackelberg-Nash equalibrium ui* and u0*}) satisfies Definition \ref{varipsilon Stackelberg-Nash equalibrium} of $\epsilon$-Stackelberg-Nash equilibrium, and thus we have established Theorem 4.1.
	
	\section{Applications to a product planning problem with sticky prices}  

	In this section, we apply the theoretical results obtained in the previous sections, to a product planning problem with sticy prices. This model is based on those presented by \cite{Engwerda-2005}, \cite{Fershtman-Kamien-2005}, \cite{Li-Marelli-2021} and \cite{Wang-2025}. Here we assume that $\left\{W_0(s),\overline{W}_0(s), W_i(s),\overline{W}_i(s),0 \leq s \leq t, 1 \leq i \leq N\right\}$ is a standard $(2N+2)$-dimensional Brownian motion.
	We consider a scenario where there is a headquarters manufacturer $\mathcal{B}_0$ and $N$ local manufacturers $\mathcal{B}_i$, $i=1,2,\cdots,N$ producing homogeneous products in a large-scale market within the time interval $[0, T]$.  We further  hypothesize that market prices do not adjust immediately to the levels specified by the demand function. Given the existence of lags in market price adjustments, such prices are termed ``sticky prices".  Since the product price in a region is affected by prices in other regions, we propose the following model. The price $p_0(\cdot)$ in the region  near the headquarters manufacturer $\mathcal{B}_0$ satisfies
	\begin{equation}\label{price of leader}
		\left\{\begin{aligned}
			d p_0(t)= &\ s_0(t)\left[\lambda_0\left(a-q_0(t)\right)+\mu_0 p^{(N)}(t)-p_0(t)\right] d t +\sigma_0(t) d W_0(t)+\bar{\sigma}_0(t) d \overline{W}_0(t) , \\
			 p_0(0)= &\ \bar{p}_0,
		\end{aligned}\right.
	\end{equation}
	and the price $p_i(\cdot)$ in the region  near the local manufacturer $\mathcal{B}_i$ satisfies
	\begin{equation}\label{price of follower}
		\left\{\begin{aligned}
			d p_i(t)= &\ s(t)\left[\lambda\left(a-q_i(t)\right)+\mu p^{(N)}(t)+\nu\mathbb{E}p_0(t)-p_i(t)\right] d t +\sigma(t) d W_i(t)+\bar{\sigma}(t) d \overline{W}_i(t) , \\
			p_i(0)= &\ \bar{p}_i,\quad i=1,2,\cdots,N,
		\end{aligned}\right.
	\end{equation}
	where $q_0(\cdot)$ is the output of the headquarters manufacturer $\mathcal{B}_0$, $q_i(\cdot)$ is the output of the local manufacturer $\mathcal{B}_i$, $i=1,2,\cdots,N$ and $\lambda_0$, $\mu_0$, $\lambda$, $\mu$, $\nu$ is positive constant satisfying  $\lambda_0+\mu_0=1$ and $\lambda+\mu +\nu=1$. Here, we assume that $a-q_0(t)$ ($a>0$ is a constant) represents the price effect on the demand function for the given level of output in the region surrounding the headquarters manufacturer $\mathcal{B}_0$ (see \cite{Engwerda-2005}). Similarly, $a-q_i(t)$ is the price effect on the demand function for the given level of output in the region surrounding the local manufacturer $\mathcal{B}_i$, $i=1,2,\cdots,N$. Then, the weighted averages $\lambda_0\left(a-q_0(\cdot)\right)+\mu_0 p^{(N)}(\cdot)$ and $\lambda\left(a-q_i(\cdot)\right)+\mu p^{(N)}(\cdot)+\nu\mathbb{E}p_0(\cdot)$ represent the instantaneous prices that the headquarters manufacturer and each local manufacturer desire to achieve, reflecting the mutual influence of prices across regions. In addition, $s_0(\cdot)$ and $s(\cdot)$ denote the adjustment speed parameter of the headquarters manufacturer and each local manufacturer, respectively. Moreover, we assume that each manufacturer cannot directly observe the price; instead, they can only observe an observation process correlated with the real-time price. Then, the observation process of the headquarters manufacturer $\mathcal{B}_0$ satisfies
	\begin{equation}\label{obsevation of leader}
		\left\{\begin{aligned}
			d z_0(t)= & \left(m_0(t) p_0(t)+n_0(t)\right) d t +\delta_0(t)  d \overline{W}_0(t) , \\
			z_0(0)= &\ 0,
		\end{aligned}\right.
	\end{equation}
	and the observation process of the local manufacturer $\mathcal{B}_i$ satisfies
	\begin{equation}\label{obsevation of follower}
		\left\{\begin{aligned}
			d z_i(t)= & \left(m(t) p_i(t)+n(t)\right) d t +\delta(t)  d \overline{W}_i(t) , \\
			z_i(0)= &\ 0,\quad i=1,2,\cdots,N.
		\end{aligned}\right.
	\end{equation}
	For simplicity, the manufacturers are assumed to have quadratic production costs, then we assume that the cost functional of the headquarters manufacturer $\mathcal{B}_0$ is 
	\begin{equation}\label{headquarter cost}
		\begin{aligned}
			\mathcal{J}_0&\left(q_0(\cdot),q(\cdot)\right)=\frac{1}{2} \mathbb{E} \bigg\{\int_0^T \Big(\|p_0(t)-\bar{\eta}(t)\|_{Q_0}^2-p_0(t)q_0(t)+\|q_0(t)\|^2_{R_0}\Big)dt\bigg\}.
		\end{aligned}
	\end{equation}
	Here, $\|q_0(t)-\bar{\eta}(t)\|_{Q_0}^2$ is a penalty term that can represent government price regulation of certain daily necessities. Furthermore, $p_0(t)q_0(t)$ is written as a negative value which denotes headquarters manufacturer’s profit, since the equation (\ref{headquarter cost}) stands for the cost. Then  $\|q_0(t)\|^2_{R_0}$ represents the production cost of the headquarter manufacturer $\mathcal{B}_0$. Similarly, the cost functional of the local manufacturer $\mathcal{B}_i$ is 
	\begin{equation}\label{local cost}
		\begin{aligned}
			\mathcal{J}_i\left(q_i(\cdot),q_{-i}(\cdot),q_0(\cdot)\right)&=\frac{1}{2} \mathbb{E} \bigg\{\int_0^T \Big(\|p_i(t)-wp^{(N)}(t)-\left(1-w\right)p_0\|_{Q}^2\\
                         &\qquad\qquad -p_i(t)q_i(t)+\|q_i(t)\|^2_{R}\Big)dt\bigg\},\quad i=1,2,\cdots,N.
		\end{aligned}
	\end{equation}
	Here, $\|q_i(t)-wq^{(N)}(t)-\left(1-w\right)q_0\|_{Q}^2$ is a penalty term that captures the price gap between the products in the region near local manufacturer $\mathcal{B}_i$ and those in other regions, serving to prevent excessive price dispersion across areas. $w\in[0,1]$ is a constant. We then formulate the following problem.
	\begin{myprob}
		To find a decentralized $\varepsilon$-Stackelberg-Nash equilibrium production planning strategy set $(q_0^*(\cdot),q^*(\cdot))$, where $q^*(\cdot)\equiv ( q_1^*(\cdot), \cdots, q_N^*(\cdot))$ and  $q_i^*(\cdot)\in\mathcal{U}^d_i, i=0,1,\cdots, N$,  such that
		
		(i) Given $q_0 \in \mathcal{U}_0^d$, for $i=1, \cdots, N$, the strategy $q^*(\cdot;q_0)\equiv\left(q^*_1(\cdot;q_0), \ldots,\right.$ $\left. q^*_N(\cdot;q_0)\right)$ satisfies that for any $q_i(\cdot) \in \mathcal{U}_i^d$,
		\begin{equation*}
			\mathcal{J}_i\left(q^*_i(\cdot;q_0),q^*_{-i}(\cdot;q_0), q_0\right) \leq  \mathcal{J}_i\left(q_i(\cdot); q^*_{-i}(\cdot;q_0), q_0\right)+\varepsilon;
		\end{equation*}
		
		(ii) the strategy  $q^*_0(\cdot)$ satisfies that for any $q_0(\cdot) \in \mathcal{U}_0^d$,
		\begin{equation*}
			\mathcal{J}_0\left(q^*_0(\cdot),q^*(\cdot;q^*_0)\right)\leq  \mathcal{J}_0\left(q_0(\cdot ),q^*(\cdot;q_0)\right)+\varepsilon,
		\end{equation*}
		where $\varepsilon = \varepsilon(N)$ such that $\lim_{N \rightarrow \infty} \varepsilon(N) = 0$.
	\end{myprob}
	Obviously, this model established above is a special case of the problem introduced in Section 2. Therefore,  we apply the main results from Section 4 to solve this problem. For the simplicity of the calculations, we set the number of manufacturers $N=300$ and $T=5$. We additionally assume the following system parameters: $s_0=3$, $\lambda_0=0.5$, $a=20$, $\mu_0=0.5$, $\sigma_0=0.4$, $\bar{\sigma}_0=0.4$, $s=3$,  $\lambda=0.3$, $\mu=0.3$, $\nu=0.3$, $\sigma=0.4$, $\bar{\sigma}=0.4$, $m_0=0.8$, $n_0=0.8$, $\delta_0=0.8$, $m=0.8$, $n=0.8$, $\delta=0.8$, $\bar{\eta}=11$, $Q_0=3$, $R_0=1$, $w=0.5$, $Q=5$, $R=1$. To provide more intuitive insight, we use the specific coefficients above to generate several plots that corroborate the validity of our practical findings. Taking into account that the headquarters manufacturer is located in a relatively prosperous area, we have set a higher initial price for the vicinity of the headquarters' manufacturer, while a lower initial price has been set for areas near the local manufacturers. Therefore, we set $\bar{p}_i(0)=8$ and $\bar{p}_0(0)=14$.
	
	By the  Euler's method, the trajectories of the optimal decentralized strategies of $q^*_i(\cdot),i=1,\cdots,N$ of $N$ local manufacturers $\mathcal{B}_i,i=1,\cdots,N$ are plotted by Figure \ref{fig:q^*_i}. 
\begin{figure}[H]
		\centering  
			\includegraphics[width=0.6\textwidth]{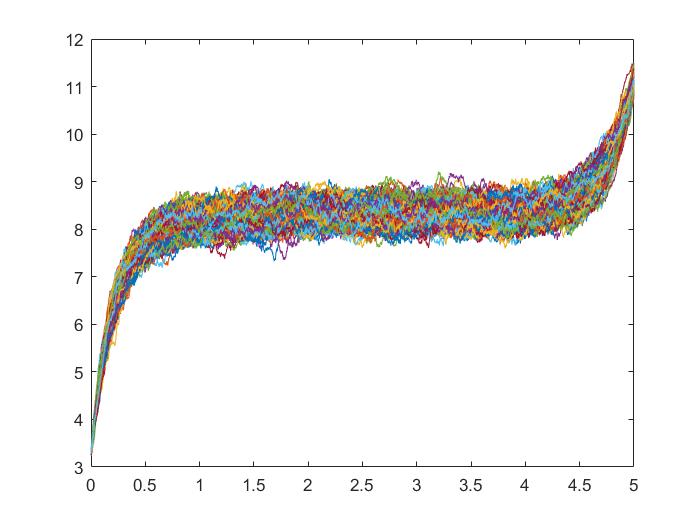}
			\caption{The trajectories of $q^*_i(\cdot),i=1,\cdots,N$. }
			\label{fig:q^*_i}
	\end{figure}

After implementing the optimal decentralized strategies, the trajectories of prices $p^*_i(\cdot), i=1,\cdots,N$ of $N$ local manufacturers $\mathcal{B}_i,i=1,\cdots,N$ are shown in Figure \ref{fig:p^*_i}. 

    \begin{figure}[H]	
        \centering
			\includegraphics[width=0.6\textwidth]{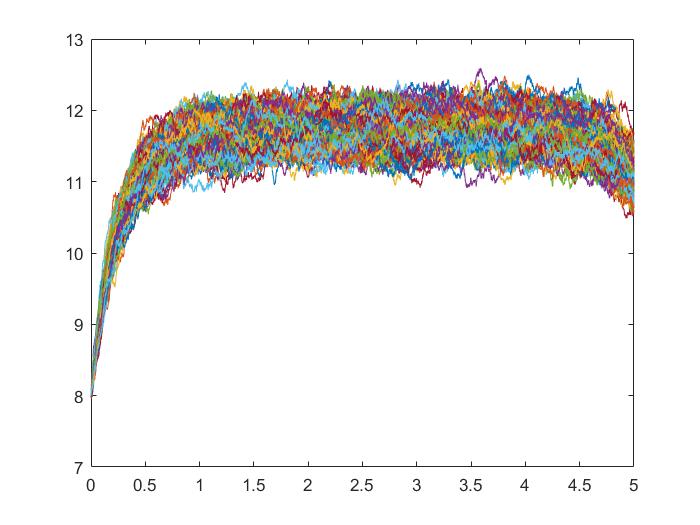}
			\caption{The trajectories of $p^*_i(\cdot)$, $i=1,\cdots,N$}
			\label{fig:p^*_i}
	\end{figure}

We can observe that the initial output of the product is relatively low. This is because people living near local manufacturers have lower purchasing power and weaker acceptance of new things, which results in initially low demand in those regions. As people gain a deeper understanding of the product, demand increases, driving prices upward. When supply and demand reach equilibrium, both price and output maintain stability. When the production of the product is scheduled to be phased out, the product may have been superseded by newer versions. Therefore, local manufacturers eager to avoid excess inventory and reduced prices, which leads to the increases in demand and output. In addition, Figure \ref{fig:p^{*(N)}} illustrates the convergence of the price-average term $p^{*(N)}(\cdot)$ to the limiting process $\mathbb{E}\bar{p}_i^*(\cdot)$. 

    \begin{figure}[H]
		\centering
		\includegraphics[width=0.6\textwidth]{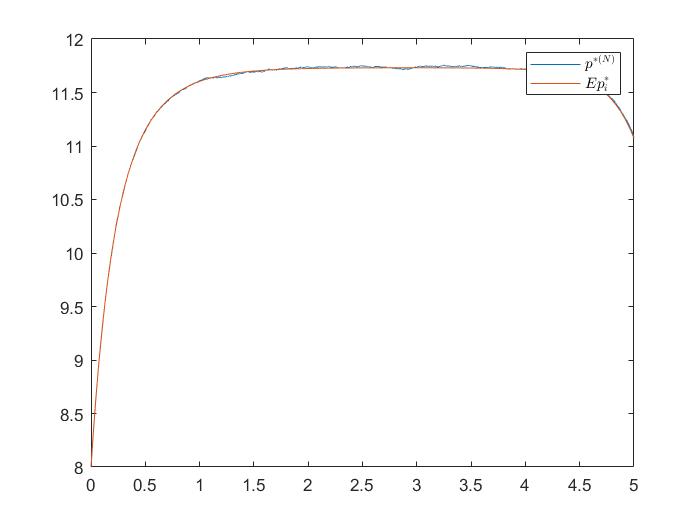}
		\caption{The solutions curves of $p^{*(N)}(\cdot)$ and $\mathbb{E}p^*_i(\cdot)$.}
		\label{fig:p^{*(N)}} 
	\end{figure}
	
	Similarly, the trajectory of the optimal decentralized strategy of  $q^*_0(\cdot)$ of the headquarters manufacturer $\mathcal{B}_0$ and its expectation are given by Figure \ref{fig:q^*_0}. After implementing the optimal decentralized strategy $q^*_0(\cdot)$, the prices $p^*_0(\cdot)$ of the headquarters manufacturer $\mathcal{B}_0$ and their expectation are shown in Figure \ref{fig:p^*_0}. Since people near the headquarters manufacturer have better economic conditions, there is high demand when the product is put on sale. Subsequently, the demand for the product weakens, causing prices to drop until both demand and price stabilized. As the product is nearing the end of its production cycle, the region near the headquarters manufacturer also encounters a situation similar to that of the local manufacturers: the price is lowered to promote sales, which in turn leads to increases in demand and output.

    \begin{figure}[H]
		\centering  
			\includegraphics[width=0.6\textwidth]{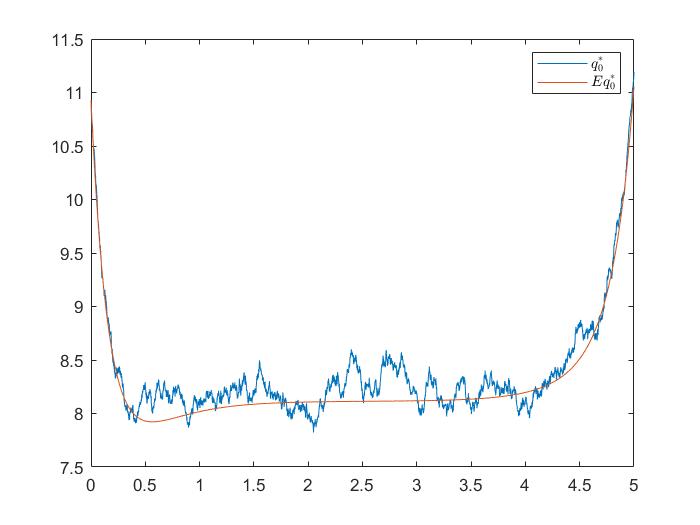}
			\caption{The trajectories of $q^*_0(\cdot)$ and $\mathbb{E}q^*_0(\cdot)$.}
			\label{fig:q^*_0}
	\end{figure}

	\begin{figure}[H]	
        \centering 
			\includegraphics[width=0.6\textwidth]{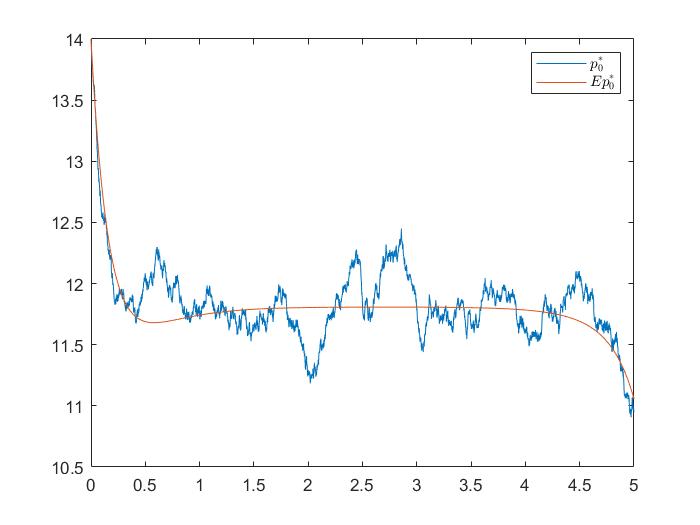}
			\caption{The trajectories of $p^*_0(\cdot)$ and $\mathbb{E}p^*_0(\cdot)$}
			\label{fig:p^*_0} 
	\end{figure}

	\section{Conclusion}
	
	In this paper, we have studied a linear-quadratic partially observed mean field Stackelberg stochastic differential game. The state equations of the single leader and multiple followers are general. By utilizing the techniques of state decomposition and backward separation principle, we overcome the difficulty of circular dependency arising from the partial observation framework. Then the strategies are given by the stochastic maximum principle with partial information and optimal filter technique, and the feedback form of decentralized strategies have been obtained by using some high-dimensional Riccati equations. Moreover, the decentralized  strategies obtained has been verified as an $\varepsilon$-Stackelberg-Nash equilibrium of the original game (\textbf{Problem \ref{problem centralized}}).  Finally, we apply the theoretical result to a product planning problem with sticky prices. 
	
	The general solvability of the high-dimensional Riccati equation \eqref{RE P2}, \eqref{RE P3}, \eqref{Sigma1 equation}, \eqref{Sigma2 equation}, \eqref{RE P2} is rather challenging. Problems with partial observation are the interesting but difficult ones. We will consider these topics in our future research. We will try to extend our results to cases with delays, nonlinearities, and random coefficients. Also, a continuous-time partially observed large-population Stackelberg game with heterogeneous followers is worthy to be investigated, where the heterogeneity can be modeled with $K$-distinct types or continuum parameter sets and different type partial observability can be described with corresponding distinct observation processes. We will consider these topics in the near future.

\section*{Appendix}\setcounter{section}{0}

\subsection{Proof of Theorem \ref{open loop thm1 of follower}}

\begin{proof}
		Due to Lemma \ref{follower lemma2}, we only need to minimize $J_i\left(u_i\right)$ over $\mathcal{U}_i$. Suppose $u_i^{u_0,u_i^*}$ is the optimal strategy of the sub-problem (i) in \textbf{Problem (\ref{problem decentralized})} and $\bar{x}^*_i$ is the corresponding optimal trajectory. For any $u_i \in \mathcal{U}_i$ and $\forall$ $\varepsilon>0$, we denote
		$$
		u_i^\varepsilon=u^*_i+\varepsilon v_i \in \mathcal{U}_i,
		$$
		where $v_i=u_i-u^*_i$.
		
		Let $\bar{x}^{u_0,u_i^{\epsilon}}_i$ be the solution of the following perturbed state equation:
		$$
		\left\{\begin{aligned}
			\bar{x}_i^{u_0,u_i^{\epsilon}}&=\left[A \bar{x}^{u_0,u_i^{\epsilon}}_i+B u^\varepsilon_i+\bar{A} \mathbb{E}\bar{x}^{u_0,u_i^{\epsilon}}_i+C z+F \mathbb{E}\bar{x}_0^{u_0}+b\right] d t +\sigma d W_i+\bar{\sigma} d \overline{W}_i, \\
			\bar{x}_i^{u_0,u_i^{\epsilon}}(0)&=\xi.
		\end{aligned}\right.
		$$
		Let $\Delta \bar{x}_i:=\frac{\bar{x}_i^{u_0,u_i^{\epsilon}}-\bar{x}^{u_0,u_i^*}_i}{\varepsilon}$. It can be verified that $\Delta \bar{x}_i$ satisfies
		$$
		\left\{\begin{aligned}
			d \Delta \bar{x}_i&=\left(A \Delta \bar{x}_i+B v_i+\bar{A} \mathbb{E}\Delta\bar{x}_i\right) d t \\
			\Delta \bar{x}_i(0)&=0.
		\end{aligned}\right.
		$$
		Applying It\^o's formula to $\left\langle \Delta \bar{x}_i, p_i\right\rangle$, we derive
		\begin{equation}\label{Ito of open loop follower}
			\begin{aligned}
				&\mathbb{E}\left[\left\langle \Delta\bar{x}_i(T), p_i(T) \right\rangle\right]=\mathbb{E} \int_0^T \bigg[-\left\langle \Delta \bar{x}_i,Q\bar{x}_i^{u_0,u_i^*}
				+\tilde{\Gamma}_2 \mathbb{E}\bar{x}^{u_0,u_i^*}_i +\left(\bar{\Gamma}_2^\top-I_n\right)Q\Gamma_2 z+S^\top u_i^* \right.\\
				&\qquad -\bar{\Gamma}_2^\top S^\top\mathbb{E}u_i^* -Q\Gamma_3 \bar{x}_0^{u_0}  \left.+\tilde{\Gamma}_3\mathbb{E}\bar{x}_0^{u_0}+\left(\bar{\Gamma}_2^\top-I_n\right)Q\eta_2\right\rangle+\left\langle B v_i, p_i\right\rangle \bigg]d t.
			\end{aligned}
		\end{equation}
		Then
		$$
		\begin{aligned}
			& J_i\left(u_i^\varepsilon\right)-J_i\left(u^*_{i}\right)=\frac{\varepsilon^2}{2} X_1+\varepsilon X_2,
		\end{aligned}
		$$
		where
		$$
		\begin{aligned}
			X_1&:=\mathbb{E}\bigg[\int_0^T\bigg( \left\langle Q\left(\Delta\bar{x}_i-\bar{\Gamma}_2\mathbb{E}\Delta\bar{x}_i\right),\Delta\bar{x}_i-\bar{\Gamma}_2\mathbb{E}\Delta\bar{x}_i\right\rangle
			+2\left\langle S\left(\Delta\bar{x}_i-\bar{\Gamma}_2\mathbb{E}\Delta\bar{x}_i\right),v_i\right\rangle  \\
			&\qquad \quad+ \left\langle Rv_i,v_i\right\rangle\bigg)d t+\left\langle G\left(\Delta\bar{x}_i(T)-\bar{\Gamma}_4\mathbb{E}\Delta\bar{x}_i(T)\right),\Delta\bar{x}_i(T)-\bar{\Gamma}_4\mathbb{E}\Delta\bar{x}_i(T)\right\rangle \bigg],
		\end{aligned}
		$$
		\begin{equation}\label{X_2 of follower}
			\begin{aligned}
				X_2&:=\mathbb{E}\left[\int_0^T \bigg(\left\langle Q\left(\Delta\bar{x}_i-\bar{\Gamma}_2\mathbb{E}\Delta\bar{x}_i\right),\bar{x}^{u_0,u_i^*}_i
				-\Gamma_2  z-\bar{\Gamma}_2  \mathbb{E}\bar{x}^{u_0,u_i^*}_i-\Gamma_3\bar{x}_0-\bar{\Gamma}_3\mathbb{E}\bar{x}_0 -\eta_2\right\rangle  \right.\\
				&\qquad +\left\langle R v_i, u^*_i\right\rangle+\left\langle S\left(\Delta\bar{x}_i-\bar{\Gamma}_2\mathbb{E}\Delta\bar{x}_i\right),u_i^*\right\rangle+\left\langle S\left(\bar{x}_i^{u_0,u_i^*}-\bar{\Gamma}_2\mathbb{E}\bar{x}_i^{u_0,u_i^*}\right),v_i\right\rangle\bigg)d t\\
				&\qquad +\left\langle G\big(\Delta\bar{x}_i(T)-\bar{\Gamma}_4\mathbb{E}\Delta\bar{x}_i(T)\big),\bar{x}^{u_0,u_i^*}_i(T)-\Gamma_4  z(T)-\bar{\Gamma}_4  \mathbb{E}\bar{x}^{u_0,u_i^*}_i(T)\right.\\
                &\qquad -\left.\Gamma_5 \bar{x}_0^{u_0}(T)-\bar{\Gamma}_5\mathbb{E}\bar{x}_0^{u_0}(T)-\eta_4\right\rangle \bigg].
			\end{aligned}
		\end{equation}
		Due to the optimality of $u^*_i$, we have $J_i\left(u_i^\varepsilon \right)-J_i\left(u^*_{i} \right) \geq 0$. Using the completing the square method and $Q-S^\top R^{-1}S\geq 0$ in Assumption \ref{A1}, we obtain $X_1\geq 0$. Noticing the arbitrariness of $\epsilon$, we have $X_2=0$. Then, simplifying (\ref{X_2 of follower}) with (\ref{Ito of open loop follower}), we have
		$$
		X_2=\mathbb{E} \int_0^T\left\langle B^{\top} \mathbb{E}\left[\left.p_i\right\rvert \mathcal{F}_t^{\bar{Y}_{i1}}\right]+R u^*_i+S(\mathbb{E}\left[\left.\bar{x}_i^{u_0,u_i^*}\right\rvert \mathcal{F}_t^{\bar{Y}_{i1}}\right]-\bar{\Gamma}_2\mathbb{E}\bar{x}^{u_0,u_i^*}_i), v_i\right\rangle d t\,.
		$$
		Due to the arbitrariness of $v_i\in\mathcal{U}_i$, we have $B^{\top} \mathbb{E}\left[\left.p_i\right\rvert \mathcal{F}_t^{\bar{Y}_{i1}}\right]+R u^*_i+S(\mathbb{E}\left[\left.\bar{x}_i^{u_0,u_i^*}\right\rvert \mathcal{F}_t^{\bar{Y}_{i1}}\right]-\bar{\Gamma}_2\mathbb{E}\bar{x}^{u_0,u_i^*}_i)=0$. Noting the result $\mathcal{F}^{\bar{Y}_{i1}}=\mathcal{F}^{\bar{Y}^{u_0,u_i^*}_{i}}$ of Lemma \ref{follower lemma1}, we obtain the optimal conditions (\ref{follower open loop optimal control}). The proof is complete.
	\end{proof}

\subsection{Proof of Theorem \ref{open loop thm1 of leader}}

\begin{proof}
		Due to Lemma \ref{leader lemma2}, we only need to minimize $J_0\left(u_0\right)$ over $\mathcal{U}_0$. Suppose $u_0^*$ is optimal strategy of the sub-problem (ii) in \textbf{Problem (\ref{problem decentralized})} and $\left(\bar{x}^{u_0^*}_0,\mathbb{E}\bar{x}_i^{u_0^*},\varphi^{u_0^*}\right)$ are the corresponding optimal trajectories. For any $u_0 \in \mathcal{U}_0$ and $\forall$ $\varepsilon>0$, we denote
		$$
		u_0^\varepsilon=u^*_0+\varepsilon v_0 \in \mathcal{U}_0,
		$$
		where $v_0=u_0-u^*_0$.
		Let $\left(\bar{x}^{u_0^{\epsilon}}_0,\mathbb{E}\bar{x}_i^{u_0^{\epsilon},u_i^*},\varphi^\epsilon\right)$ be the solution to the following perturbed state equation:
		$$
		\left\{\begin{aligned}
			d \bar{x}^{u_0^{\epsilon}}_0=& \left(A_0 \bar{x}^\epsilon_0+B_0 u^{\epsilon}_0+\bar{A}_0\mathbb{E}\bar{x}^{u_0^{\epsilon}}_0+C_0 \mathbb{E}\bar{x}_i^{u_0^{\epsilon},u_i^*}+b_0\right) d t +\sigma_0 d W_0+\bar{\sigma}_0 d \overline{W}_0, \\
			d \mathbb{E}\bar{x}_i^{u_0^{\epsilon},u_i^*}=& \left[\left(A+\bar{A}+C-BR^{-1}B^\top P_2-BR^{-1}S\left(I_n-\bar{\Gamma}_2\right)\right)\mathbb{E}\bar{x}_i^{u_0^{\epsilon},u_i^*} \right.\\
			&\left.+\left(F-BR^{-1}B^\top P_3\right) \mathbb{E}\bar{x}^{u_0^{\epsilon}}_0-BR^{-1}B^\top\varphi^{u_0^{\epsilon}}+b\right] d t,\\
			d\varphi^{u_0^{\epsilon}}=&-\left[\left(A+\bar{A}-BR^{-1}B P_2-BR^{-1}S\left(I_n-\bar{\Gamma}_2\right)\right)^\top\varphi^{u_0^{\epsilon}}+P_3B_0\mathbb{E}u^\epsilon_0+P_2b+P_3b_0\right.\\
			&\quad\left.+\left(\bar{\Gamma}_2^\top-I_n\right)Q\eta_2\right]dt,\\
			\bar{x}^{u_0^{\epsilon}}_0(0) =&\ \xi_0, \quad \bar{x}_i^{u_0^{\epsilon},u_i^*}(0)=\xi,\quad \varphi^{u_0^{\epsilon}}(T)=\left(\bar{\Gamma}_4^\top-I_n\right)G\eta_4\,.
		\end{aligned}\right.
		$$
		Let $\Delta \bar{x}_0=\frac{\bar{x}_0^{u_0^{\epsilon}}-\bar{x}^{u_0^*}_0}{\varepsilon}$, $\Delta \mathbb{E}\bar{x}_i=\frac{\mathbb{E}\bar{x}_i^{u_0^{\epsilon},u_i^*}-\mathbb{E}\bar{x}_i^{u_0^*,u_i^*}}{\varepsilon}$, $\Delta \varphi=\frac{\varphi^{u_0^{\epsilon}}-\varphi^{u_0^*}}{\varepsilon}$. We can verify that $\left(\Delta\bar{x}_0,\Delta\mathbb{E}\bar{x}_i,\Delta\varphi\right)$ satisfies
		$$
		\left\{\begin{aligned}
			d \Delta\bar{x}_0= & \left(A_0 \Delta\bar{x}_0+B_0 v_0+\bar{A}_0\Delta\mathbb{E}\bar{x}_0+C_0 \Delta\mathbb{E}\bar{x}_i\right) d t, \\
			d \Delta\mathbb{E}\bar{x}_i=& \left[\left(A+\bar{A}+C-BR^{-1}B^\top P_2-BR^{-1}S\left(I_n-\bar{\Gamma}_2\right)\right)\Delta\mathbb{E}\bar{x}_i \right.\\
			&\left.+\left(F-BR^{-1}B^\top P_3\right) \Delta\mathbb{E}\bar{x}_0-BR^{-1}B^\top\Delta\varphi\right] d t,\\
			d\Delta\varphi=&-\left[\left(A+\bar{A}-BR^{-1}B P_2-BR^{-1}S\left(I_n-\bar{\Gamma}_2\right)\right)^\top\Delta\varphi+P_3B_0\mathbb{E}v_0\right]dt,\\
			\Delta\bar{x}_0(0) =& 0, \quad \Delta\mathbb{E}\bar{x}_i(0)=0,\quad \Delta\varphi(T)=0.
		\end{aligned}\right.
		$$
		Applying It\^o's formula to $\left\langle \Delta \bar{x}_0, y_0\right\rangle+\left\langle \Delta \mathbb{E}\bar{x}_i, y\right\rangle+\left\langle \Delta \varphi, \gamma\right\rangle$, we derive
		\begin{equation}\label{Ito of open loop leader}
			\begin{aligned}
				\mathbb{E}&\big[\left\langle \Delta \bar{x}_0(T), y_0(T)\right\rangle+\left\langle \Delta \mathbb{E}\bar{x}_i(T), y(T)\right\rangle\big]=\mathbb{E} \int_0^T \bigg[\left\langle \Delta \bar{x}_0,-Q_0\bar{x}_0^{u_0^*}-\tilde{\Gamma}_0 \mathbb{E}\bar{x}^{u_0^*}_0+S_0^\top u^*_0\right.\\
				&\left.-\bar{\Gamma}_0^\top S_0^\top\mathbb{E}u^*_0 -\left(\bar{\Gamma}_0^\top-I_n\right)Q_0\Gamma_0\mathbb{E}\bar{x}_i^{u_0^*,u_i^*}-\left(\bar{\Gamma}_0^\top-I_n\right)Q_0\eta_0\right\rangle+\left\langle B v_0, y_0\right\rangle+\left\langle -P_3B_0\mathbb{E} v_0, \gamma\right\rangle\\
				& +\left\langle \Delta\mathbb{E} \bar{x}_i,-\Gamma_0^\top S_0^\top\mathbb{E}u^*_0-\Gamma_0^\top Q_0\left(\bar{\Gamma}_0-I_n\right) \mathbb{E}\bar{x}_0^{u_0^*}-\Gamma_0^\top Q_0\Gamma_0\mathbb{E}\bar{x}_i^{u_0^*,u_i^*}-\Gamma_0^\top Q_0\eta_0\right\rangle \bigg]d t.
			\end{aligned}
		\end{equation}
		Then
		$$
		\begin{aligned}
			& J_0\left(u_0^\varepsilon\right)-J_0\left(u^*_{0}\right)=\frac{\varepsilon^2}{2} \bar{X}_1+\varepsilon \bar{X}_2,
		\end{aligned}
		$$
		where
		$$
		\begin{aligned}
			\bar{X}_1&:=\mathbb{E}\bigg[\int_0^T \left\langle Q_0\left(\Delta\bar{x}_0-\Gamma_0\Delta\mathbb{E}\bar{x}_i-\bar{\Gamma}_0\mathbb{E}\Delta\bar{x}_0\right),\Delta\bar{x}_0
			 -\Gamma_0\Delta\mathbb{E}\bar{x}_i-\bar{\Gamma}_0\mathbb{E}\Delta\bar{x}_0\right\rangle + \left\langle Rv_0,v_0\right\rangle  \\
			&\qquad +\left\langle S_0\left(\Delta\bar{x}_0-\Gamma_0\Delta\mathbb{E}\bar{x}_i-\bar{\Gamma}_0\mathbb{E}\Delta\bar{x}_0\right),v_0\right\rangle d t
             +\left\langle G_0\left(\Delta\bar{x}_0(T)-\Gamma_1\Delta\mathbb{E}\bar{x}_i(T)\right.\right.\\
			&\qquad -\left.\left. \bar{\Gamma}_1\mathbb{E}\Delta\bar{x}_0(T)\right),\Delta\bar{x}_0(T)-\Gamma_1\Delta\mathbb{E}\bar{x}_i(T)-\bar{\Gamma}_1\mathbb{E}\Delta\bar{x}_0(T)\right\rangle \bigg],
		\end{aligned}
		$$
		\begin{equation}\label{X_2 of leader}
			\begin{aligned}
				\bar{X}_2&:=\mathbb{E}\bigg[\int_0^T \left\langle Q_0\left(\Delta\bar{x}_0-\Gamma_0\Delta\mathbb{E}\bar{x}_i-\bar{\Gamma}_0\mathbb{E}\Delta\bar{x}_0\right), \bar{x}_0^{u_0^*}
				-\Gamma_0\mathbb{E}\bar{x}_i^{u_0^*,u_i^*}-\bar{\Gamma}_0\mathbb{E}\bar{x}_0^{u_0^*}-\eta_0 \right\rangle  \\
				&\qquad +\left\langle R_0 v_0, u^*_0\right\rangle+\left\langle S_0\left(\bar{x}_0^{u_0^*}
				-\Gamma_0\mathbb{E}\bar{x}_i^{u_0^*,u_i^*}-\bar{\Gamma}_0\mathbb{E}\bar{x}_0^{u_0^*}\right),v_0\right\rangle\\
                &\qquad +\left\langle S_0\left(\Delta\bar{x}_0-\Gamma_0\Delta\mathbb{E}\bar{x}_i-\bar{\Gamma}_0\mathbb{E}\Delta\bar{x}_0\right),u^*_0\right\rangle d t+\Big\langle G_0\left(\Delta\bar{x}_0(T)-\Gamma_1\Delta\mathbb{E}\bar{x}_i(T)\right.\\
				&\qquad -\left.\bar{\Gamma}_1\mathbb{E}\Delta\bar{x}_0(T)\right),\bar{x}^{u_0^*}_0(T)-\Gamma_1\mathbb{E}\bar{x}_i^{u_0^*,u_i^*}(T)-\bar{\Gamma}_1\mathbb{E}\bar{x}_0^{u_0^*}(T)-\eta_1\Big\rangle \bigg].
			\end{aligned}
		\end{equation}
		Due to the optimality of $u^*_0$, we have $J_0\left(u_0^\varepsilon \right)-J_0\left(u^*_{0} \right) \geq 0$. By using the completing the square method and $Q_0-S_0^\top R_0^{-1}S_0\geq 0$ in Assumption \ref{A1},  we can get $\bar{X}_1 \geq 0$. Noticing the arbitrariness of $\epsilon$, we have $\bar{X}_2=0$. Then, simplifying (\ref{X_2 of leader}) with (\ref{Ito of open loop leader}), we have
		$$
		\begin{aligned}
			\bar{X}_2=&\mathbb{E} \int_0^T\Big\langle B_0^{\top} \mathbb{E}\left[y_0|\mathcal{F}^{\bar{Y}_{01}}_t\right]-B_0^\top P_3^\top\gamma+R u^*_0\\
			&+S_0\left(\mathbb{E}\left[\left. \bar{x}^{u_0^*}_0\right\rvert \mathcal{F}^{\bar{Y}_{01}}_t\right]-\Gamma_0\mathbb{E}\bar{x}_i^{u_0^*,u_i^*}-\bar{\Gamma}_0\mathbb{E}\bar{x}^{u_0^*}_0\right), v_0\Big\rangle d t.
		\end{aligned}
		$$
		Due to the arbitrariness of $v_0\in\mathcal{U}_0$, we have $B_0^{\top} \mathbb{E}\left[y_0|\mathcal{F}^{\bar{Y}_{01}}_t\right]-B_0^\top P_3^\top\gamma+R u^*_0+S_0\big(\mathbb{E}\left[\left. \bar{x}^{u_0^*}_0\right\rvert \mathcal{F}^{\bar{Y}_{01}}_t\right] -\Gamma_0\mathbb{E}\bar{x}_i^{u_0^*,u_i^*}-\bar{\Gamma}_0\mathbb{E}\bar{x}^{u_0^*}_0\big)=0$. Noting the result $\mathcal{F}^{\bar{Y}^{u_0^*}_0}_t=\mathcal{F}^{\bar{Y}_{01}}_t$ of Lemma \ref{leader lemma1}, we obtain the optimal conditions (\ref{leader open loop optimal strategy}). The proof is complete.
	\end{proof}

\subsection{Proof of Lemma \ref{CC lemma}}

\begin{proof}
		Due to (\ref{hat bar x_i}), we have
		\begin{equation}\label{hat bar x^*N}
			\left\{\begin{aligned}
				d \hat{\bar{x}}^{u_0,u^*(N)}&= \left\{\left(A-B R^{-1} B^\top P_1-BR^{-1}S\right) \hat{\bar{x}}^{u_0,u^*(N)}+\left[\bar{A}+C-B R^{-1} B^\top\right. \right.\\
				&\qquad \times(P_2-P_1)+BR^{-1}S\bar{\Gamma}_2\Big]\mathbb{E} \bar{x}_i^{u_0,u^*_i}+\left(F-B R^{-1} B^{\top} P_3\right) \mathbb{E}\bar{x}^{u_0}_0\\
				&\qquad +b-B R^{-1} B^\top \varphi^{u_0}+\left.\left[\bar{\sigma}+\Pi H^\top (f^\top)^{-1}\right]f^{-1}H\left(\bar{x}^{u_0,u^*(N)}\right.\right.\\
                &\qquad -\left.\left.\hat{\bar{x}}^{u_0,u^*(N)}\right)\right\} d t+\frac{1}{N}\sum_{i=1}^{N}\left[\bar{\sigma}+\Pi H^\top (f^\top)^{-1}\right] d \overline{W}_i, \\
				\hat{\bar{x}}^{u_0,u^*(N)}(0) &=\xi.
			\end{aligned}\right.
		\end{equation}
		From (\ref{hat bar x_i}) and (\ref{E bar x_i}), we obtain
		\begin{equation}\label{hatx-Ebarx}
			\left\{\begin{aligned}
				d\big( \hat{\bar{x}}^{u_0,u^*(N)}-\mathbb{E}\bar{x}_i^{u_0,u^*_i}\big) &=\bigg\{\left(A-B R^{-1} B^\top P_1-BR^{-1}S\right) \left( \hat{\bar{x}}^{u_0,u^*(N)}
				-\mathbb{E}\bar{x}_i^{u_0,u^*_i}\right)\\
				&\qquad +\left[\bar{\sigma}+\Pi H^\top (f^\top)^{-1}\right]f^{-1}H\left(\bar{x}^{u_0,u^*(N)}-\hat{\bar{x}}^{u_0,u^*(N)}\right)\bigg\} d t\\
                &\qquad +\frac{1}{N}\sum_{i=1}^{N}\left[\bar{\sigma}+\Pi H^\top (f^\top)^{-1}\right] d \overline{W}_i,\\
				\big(\hat{\bar{x}}^{u_0,u^*(N)}-\mathbb{E}\bar{x}_i^{u_0,u^*_i}\big)(0) &= 0.
			\end{aligned}\right.
		\end{equation}
		Similarly, we have 
		\begin{equation}\label{bar x^*N}
			\left\{\begin{aligned}
				d\bar{x}^{u_0,u^*(N)}= & \bigg\{A\bar{x}^{u_0,u^*(N)}-B R^{-1}\left( B^\top P_1+S\right) \hat{\bar{x}}^{u_0,u^*(N)}+\left[\bar{A}+C-B R^{-1} B^\top\right.\\
				& \left.\times\left(P_2-P_1\right)+BR^{-1}S\bar{\Gamma}_2\right]\mathbb{E} \bar{x}_i^{u_0,u^*_i}+\left(F-B R^{-1} B^{\top} P_3\right) \mathbb{E}\bar{x}_0^{u_0}\\
				& +b-B R^{-1} B^\top \varphi^{u_0}\bigg\} d t+\frac{1}{N}\sum_{i=1}^{N}\sigma d W_i+\frac{1}{N}\sum_{i=1}^{N}\bar{\sigma} d \overline{W}_i, \\
				\bar{x}^{u_0,u^*(N)}(0) & =\xi.
			\end{aligned}\right.
		\end{equation}
		\begin{equation*}
			\left\{\begin{aligned}
				d\big( \bar{x}&^{u_0,u^*(N)}-\mathbb{E}\bar{x}_i^{u_0,u^*_i}\big)  =\bigg\{A\big( \bar{x}^{u_0,u^*(N)}-\mathbb{E}\bar{x}_i^{u_0,u^*_i}\big) \\
				&-B R^{-1}\left( B^\top P_1+S\right) \big( \hat{\bar{x}}^{u_0,u^*(N)}-\mathbb{E}\bar{x}_i^{u_0,u^*_i}\big)\bigg\} d t+\frac{1}{N}\sum_{i=1}^{N}\sigma d W_i+\frac{1}{N}\sum_{i=1}^{N}\bar{\sigma} d \overline{W}_i,\\
				\big(\bar{x}&^{u_0,u^*}-\mathbb{E}\bar{x}_i^{u_0,u^*_i}\big)(0) = 0.
			\end{aligned}\right.
		\end{equation*}
		By integrating and taking expectation on both sides of the equation (\ref{hatx-Ebarx}), we have for some positive constant $K$ (may be different line by line),
		\begin{equation*}
			\begin{aligned}
				& \mathbb{E} \left[\left| \hat{\bar{x}}^{u_0,u^*(N)}(t)-\mathbb{E}\bar{x}_i^{u_0,u^*_i}(t)\right|^2\right]\leq  K \mathbb{E} \int_0^t\left|\hat{\bar{x}}^{u_0,u^*(N)}(s)-\mathbb{E}\bar{x}_i^{u_0,u^*_i}(s)\right|^2\\
				& +\left|\bar{x}^{u_0,u^*(N)}(s)-\mathbb{E}\bar{x}_i^{u_0,u^*_i}(s)\right|^2 d s+\frac{K}{N^2} \mathbb{E} \sum_{i=1}^N \left[\int_0^t \left(\bar{\sigma}
				+\Pi H^\top (f^\top)^{-1}\right) d \overline{W}_i \right]^2 ,
			\end{aligned}
		\end{equation*}
		where
		\begin{equation*}
			\begin{aligned}
				&\frac{1}{N^2} \mathbb{E} \left[ \sum_{i=1}^N \int_0^t \left(\bar{\sigma}+\Pi H^\top (f^\top)^{-1}\right) d \overline{W}_i \right]^2
				= \frac{1}{N^2} \mathbb{E}  \left[ \sum_{i=1}^N \int_0^t \left(\bar{\sigma}+\Pi H^\top (f^\top)^{-1}\right)^2 d t \right. \\
				&\quad \left.+2\sum_{ i < j} \int_0^t \left(\bar{\sigma}+\Pi H^\top (f^\top)^{-1}\right) d \overline{W}_i  \int_0^t \left(\bar{\sigma}+\Pi H^\top (f^\top)^{-1}\right) d \overline{W}_j \right]
				\leq \frac{K}{N}.
			\end{aligned}
		\end{equation*}
		Therefore,
		\begin{equation}\label{Gronwall1}
			\begin{aligned}
				& \mathbb{E} \left[\left|\hat{\bar{x}}^{u_0,u^*(N)}(t)-\mathbb{E}\bar{x}_i^{u_0,u^*_i}(t)\right|^2\right]
				\leq K \mathbb{E} \int_0^t\Big( \left|\hat{\bar{x}}^{u_0,u^*(N)}(t)-\mathbb{E}\bar{x}_i^{u_0,u^*_i}(t)\right|^2\\
				& +\left|\bar{x}^{u_0,u^*(N)}(s)-\mathbb{E}\bar{x}_i^{u_0,u^*_i}(s)\right|^2\Big) d s+O\left(\frac{1}{N}\right).
			\end{aligned}
		\end{equation}
		Similarly
		\begin{equation}\label{Gronwall2}
			\begin{aligned}
				& \mathbb{E} \left[\left|\bar{x}^{u_0,u^*(N)}(t)-\mathbb{E}\bar{x}_i^{u_0,u^*_i}(t)\right|^2\right]
				\leq K \mathbb{E} \int_0^t\Big( \left|\bar{x}^{u_0,u^*(N)}(t)-\mathbb{E}\bar{x}_i^{u_0,u^*_i}(t)\right|^2\\
				& +\left|\hat{\bar{x}}^{u_0,u^*(N)}(s)-\mathbb{E}\bar{x}_i^{u_0,u^*_i}(s)\right|^2\Big) d s+O\left(\frac{1}{N}\right).
			\end{aligned}
		\end{equation}
		From (\ref{Gronwall1}) and (\ref{Gronwall2}), we have  (\ref{CC1}) and (\ref{CC2}) by applying Gronwall's inequality.
		
		Then, the equation of $x^{u_0,u^*}_i(\cdot)$ is
		\begin{equation}\label{x^*_i}
			\left\{\begin{aligned}
				dx^{u_0,u^*}_i=&  \bigg\{Ax^{u_0,u^*}_i-B R^{-1}\left( B^\top P_1+S\right) \hat{\bar{x}}^{u_0,u^*_i}_i+\bar{A}\mathbb{E}x^{u_0,u^*_i}_i+Cx^{u_0,u^*(N)}-B R^{-1} B^{\top} P_3 \mathbb{E}\bar{x}_0^{u_0}\\
				&+F\mathbb{E}x_0^{u_0,u^*}+\left[BR^{-1}S\bar{\Gamma}_2-B R^{-1} B^\top\left(P_2-P_1\right)\right]\mathbb{E} \bar{x}_i^{u_0,u^*_i}+b-B R^{-1} B^\top \varphi^{u_0}\bigg\} d t\\
				&+\sigma d W_i+\bar{\sigma} d \overline{W}_i\\
				 x^{u_0,u^*}_i(0) =&\xi.
			\end{aligned}\right.
		\end{equation}
		And we have the equation of $x^{u_0,u^*(N)}(\cdot)$ is 
		\begin{equation}\label{x^*N}
			\left\{\begin{aligned}
				dx^{u_0,u^*(N)}= & \bigg\{\left(A+C\right)x^{u_0,u^*(N)}-B R^{-1}\left( B^\top P_1+S\right) \hat{\bar{x}}^{u_0,u^*(N)}+\bar{A}\mathbb{E}x^{u_0,u^*(N)}\\
				& +\left[-B R^{-1} B^\top\left(P_2-P_1\right)+BR^{-1}S\bar{\Gamma}_2\right]\mathbb{E} \bar{x}_i^{u_0,u^*_i}+F\mathbb{E}x_0^{u_0,u^*}-B R^{-1} B^{\top} P_3 \mathbb{E}\bar{x}_0^{u_0}\\
				&+b-B R^{-1} B^\top \varphi^{u_0}\bigg\} d t+\frac{1}{N}\sum_{i=1}^{N}\sigma d W_i+\frac{1}{N}\sum_{i=1}^{N}\bar{\sigma} d \overline{W}_i, \\
				\bar{x}^{u_0,u^*(N)}(0) & =\xi.
			\end{aligned}\right.
		\end{equation}
		Therefore, by (\ref{x^*_i}) and (\ref{E bar x_i}), we obtain
		\begin{equation}\label{x*N-Ebarx}
			\left\{\begin{aligned}
				d	\big(x^{u_0,u^*(N)}&-\mathbb{E}\bar{x}_i^{u_0,u^*_i}\big)  =\left[\left(A+C\right) \big(x^{u_0,u^*(N)}-\mathbb{E}\bar{x}_i^{u_0,u^*_i}\big)+\bar{A}\big(\mathbb{E}x^{u_0,u^*(N)}-\mathbb{E}\bar{x}_i^{u_0,u^*_i}\big)
				\right.\\
				&\left.-B R^{-1}\left( B^\top P_1+S\right)\big(\hat{\bar{x}}^{u_0,u^*(N)}-\mathbb{E}\bar{x}_i^{u_0,u^*_i}\big)+F\big(\mathbb{E}x^{u_0,u^*}_0-\mathbb{E}\bar{x}^{u_0}_0\big)\right]dt\\
				&+\frac{1}{N}\sum_{i=1}^{N} \sigma dW_i+\frac{1}{N}\sum_{i=1}^{N} \bar{\sigma}d\bar{W}_i,\\
				\big(x^{u_0,u^*(N)}&-\mathbb{E}\bar{x}_i^{u_0,u^*_i}\big)(0) = 0.
			\end{aligned}\right.
		\end{equation}
		And, from (\ref{leader state}) and the first equation of (\ref{CC system}), we have
		\begin{equation}\label{x_0-barx_0}
			\left\{\begin{aligned}
				d	\left(x_0^{u_0,u^*}-\bar{x}_0^{u_0}\right)  =&\left[A_0\left(x_0^{u_0,u^*}-\bar{x}_0^{u_0}\right)+\bar{A}_0\mathbb{E}\left(x_0^{u_0,u^*}-\bar{x}_0^{u_0}\right) +C_0\left(x^{u_0,u^*(N)}-\mathbb{E}\bar{x}_i^{u_0}\right)\right]dt,\\
				\left(x_0^{u_0,u^*}-\bar{x}_0^{u_0}\right) (0) &=\ 0.
			\end{aligned}\right.
		\end{equation}
		By integrating and taking expectation on both sides of the
		equation (\ref{x*N-Ebarx}), we have
		\begin{equation}\label{estimation 1}
			\begin{aligned}
				& \mathbb{E}\left|x^{u_0,u^*(N)}(t)-\mathbb{E}\bar{x}_i^{u_0,u^*_i}(t)\right|^2 \leq K\mathbb{E}\left\{\int_0^t\left(\left|x^{u_0,u^*(N)}(s)-\mathbb{E}\bar{x}_i^{u_0,u^*_i}(s)\right|^2+\Big|\hat{\bar{x}}^{u_0,u^*(N)}(s)\right.\right. \\
				& \quad
				-\mathbb{E}\bar{x}_i^{u_0,u^*_i}(s)\Big|^2+\left|\mathbb{E} x_0^{u_0,u^*}(s)-\mathbb{E} \bar{x}_0^{u_0}(s)\right|^2\bigg) d s\bigg\} +\frac{1}{N^2}\sum_{i=1}^N \mathbb{E} \int_0^t (\sigma^2+\bar{\sigma}^2) d s \\
				&\leq K\mathbb{E}\left\{\int_0^t\left(\left|x^{u_0,u^*(N)}(s)-\mathbb{E}\bar{x}_i^{u_0,u^*_i}(s)\right|^2+\left| x_0^{u_0,u^*}(s)- \bar{x}_0^{u_0}(s)\right|^2\right) d s\right\}+O\left(\frac{1}{N}\right) .
			\end{aligned}
		\end{equation}
		Similarly, by integrating and taking expectation on both sides of the equation (\ref{x_0-barx_0}), we have
		\begin{equation}\label{estimation 2}
			\begin{aligned}
				& \mathbb{E} \left[\left| x_0^{u_0,u^*}(t)-\bar{x}_0^{u_0}(t)\right|^2\right]\leq K \mathbb{E} \int_0^t\left(\left| x_0^{u_0,u^*}(s)-\bar{x}_0^{u_0}(s)\right|^2+\left|x^{u_0,u^*(N)}(s)-\mathbb{E}\bar{x}_i^{u_0,u^*_i}(s)\right|^2\right) d s\,.
			\end{aligned}
		\end{equation}
		Adding (\ref{estimation 1}) and (\ref{estimation 2}) together, we obtain
		\begin{equation}
			\begin{aligned}
				& \mathbb{E} \left[\left| x_0^{u_0,u^*}(t)-\bar{x}_0^{u_0}(t)\right|^2\right]+\mathbb{E}\left|x^{u_0,u^*(N)}(t)-\mathbb{E}\bar{x}_i^{u_0,u^*_i}(t)\right|^2 \\
				&\leq K\mathbb{E}\int_0^t\left(\left|x_0^{u_0,u^*}(s)-\bar{x}_0^{u_0}(s)\right|^2+\left|x^{u_0,u^*(N)}(s)-\mathbb{E}\bar{x}_i^{u_0}(s)\right|^2\right)d s +O\left(\frac{1}{N}\right).
			\end{aligned}
		\end{equation}
		By applying Gronwall's inequality, (\ref{CC3}) and (\ref{leader approximation}) holds. And using similar method, we can obtain (\ref{follower approximation}) by (\ref{CC3}) and (\ref{leader approximation}).
		Then, the proof is complete.
	\end{proof}

\subsection{Proof of Lemma \ref{left half of follower NE}}

	\begin{proof}
		From (\ref{follower cost}) and (\ref{follower limiting cost}), we get
		\begin{equation}\label{follower cost estimation optimal}
			\begin{aligned}
				&\left|\mathcal{J}_i(u_i^*(\cdot), u_{-i}^*(\cdot),u_0(\cdot))-J_i(u_i^*(\cdot),u_0(\cdot))\right|\\
				&=\frac{1}{2} \mathbb{E}\bigg\{\int_0^T \left\{\Big\langle Q\left[x^{u_0,u^*}_i-\bar{x}^{u_0,u^*_i}_i-\Gamma_2(x^{u_0,u^*(N)}-\mathbb{E}\bar{x}_i^{u_0,u^*_i})
				-\bar{\Gamma}_2(\mathbb{E}x^{u_0,u^*}_i-\mathbb{E}\bar{x}^{u_0,u^*_i}_i)\right.\right.\\
				&\qquad\quad \left.-\Gamma_3(x^{u_0,u^*}_0-\bar{x}_0^{u_0})-\bar{\Gamma}_3(\mathbb{E}x_0^{u_0,u^*}-\mathbb{E}\bar{x}_0^{u_0})\right],x^{u_0,u^*}_i-\bar{x}^{u_0,u^*_i}_i-\Gamma_2(x^{u_0,u^*(N)}-\mathbb{E}\bar{x}_i^{u_0,u^*_i})
				\\
				&\qquad\quad -\bar{\Gamma}_2(\mathbb{E}x^{u_0,u^*}_i-\mathbb{E}\bar{x}_i^{u_0,u^*_i})-\Gamma_3(x_0^{u_0,u^*}-\bar{x}_0^{u_0})-\bar{\Gamma}_3(\mathbb{E}x_0^{u_0,u^*}-\mathbb{E}\bar{x}_0^{u_0}) \Big\rangle\\
				&\qquad\quad +2\Big\langle Q\Big[x^{u_0,u^*}_i-\bar{x}^{u_0,u^*_i}_i-\Gamma_2(x^{u_0,u^*(N)}-\mathbb{E}\bar{x}^{u_0,u^*_i}_i)
				-\bar{\Gamma}_2(\mathbb{E}x^{u_0,u^*(N)}-\mathbb{E}\bar{x}^{u_0,u^*_i}_i)\\
				&\qquad\quad-\Gamma_3(x_0^{u_0,u^*}-\bar{x}_0^{u_0})-\bar{\Gamma}_3(\mathbb{E}x_0^{u_0,u^*}-\mathbb{E}\bar{x}_0^{u_0})\Big],\bar{x}^{u_0,u^*_i}_i-\Gamma_2\mathbb{E}\bar{x}^{u_0,u^*_i}_i
				-\bar{\Gamma}_2\mathbb{E}\bar{x}^{u_0,u^*_i}_i-\Gamma_3 \bar{x}_0^{u_0}\\
				&\qquad\quad \left.-\bar{\Gamma}_3\mathbb{E}\bar{x}_0^{u_0}-\eta_2\Big\rangle+2\Big\langle S\left[x^{u_0,u^*}_i-\bar{x}^{u_0,u^*_i}_i-\bar{\Gamma}_2(\mathbb{E}x^{u_0,u^*}_i-\mathbb{E}\bar{x}_i^{u_0,u^*_i})\right],u^*_i\Big\rangle\right\} dt\\
				&\qquad\quad +\Big\langle G\left[x^{u_0,u^*}_i-\bar{x}^{u_0,u^*_i}_i-\Gamma_4(x^{u_0,u^*(N)}-\mathbb{E}\bar{x}_i^{u_0,u^*_i})-\bar{\Gamma}_4(\mathbb{E}x^{u_0,u^*}_i-\mathbb{E}\bar{x}^{u_0,u^*_i}_i)\right.\\
				&\qquad\quad -\Gamma_5(x_0^{u_0,u^*}-\bar{x}_0^{u_0})-\bar{\Gamma}_5(\mathbb{E}x_0^{u_0,u^*}-\mathbb{E}\bar{x}_0^{u_0})\Big],x^{u_0,u^*}_i-\bar{x}^{u_0,u^*_i}_i-\Gamma_4(x^{u_0,u^*(N)}-\mathbb{E}\bar{x}_i^{u_0,u^*_i})\\
				&\qquad\quad -\bar{\Gamma}_4(\mathbb{E}x^{u_0,u^*}_i-\mathbb{E}\bar{x}^{u_0,u^*_i}_i)-\Gamma_5(x_0^{u_0,u^*}-\bar{x}_0^{u_0})-\bar{\Gamma}_5(\mathbb{E}x_0^{u_0,u^*}-\mathbb{E}\bar{x}_0^{u_0})\Big\rangle(T) \\
				&\qquad\quad +2\Big\langle G\Big[(x^{u_0,u^*}_i-\bar{x}^{u_0,u^*_i}_i
				-\Gamma_4(x^{u_0,u^*(N)}-\mathbb{E}\bar{x}_i^{u_0,u^*_i})-\bar{\Gamma}_4(\mathbb{E}x^{u_0,u^*}_i-\mathbb{E}\bar{x}^{u_0,u^*_i}_i)\\
				&\qquad\quad-\Gamma_5(x_0^{u_0,u^*}-\bar{x}_0^{u_0})-\bar{\Gamma}_5(\mathbb{E}x_0^{u_0,u^*}-\mathbb{E}\bar{x}_0^{u_0})\Big],\bar{x}^{u_0,u^*}_i-\Gamma_4 \mathbb{E}\bar{x}_i^{u_0,u^*_i}-\bar{\Gamma}_4  \mathbb{E}\bar{x}^{u_0,u^*_i}_i-\Gamma_5 \bar{x}_0^{u_0}\\
				&\qquad\quad-\bar{\Gamma}_5\mathbb{E}\bar{x}_0^{u_0}-\eta_4\Big\rangle (T)\bigg\}\,.
			\end{aligned}
		\end{equation}
		Noting Lemma \ref{CC lemma}, we can obtain
		\begin{equation*}
			\begin{aligned}
				&\mathbb{E}\int_0^T\Big\{ \Big\langle Q\big[x^{u_0,u^*}_i-\bar{x}^{u_0,u^*_i}_i-\Gamma_2(x^{u_0,u^*(N)}-\mathbb{E}\bar{x}_i^{u_0,u^*_i})-\bar{\Gamma}_2(\mathbb{E}x^{u_0,u^*}_i-\mathbb{E}\bar{x}^{u_0,u^*_i}_i)-\Gamma_3(x_0^{u_0,u^*}-\bar{x}_0^{u_0})\\
				&\qquad -\bar{\Gamma}_3(\mathbb{E}x_0^{u_0,u^*}-\mathbb{E}\bar{x}_0^{u_0})\big],x^{u_0,u^*}_i-\bar{x}^{u_0,u^*_i}_i-\Gamma_2(x^{u_0,u^*(N)}-\mathbb{E}\bar{x}_i^{u_0,u^*_i})-\bar{\Gamma}_2(\mathbb{E}x^{u_0,u^*}_i-\mathbb{E}\bar{x}^{u_0,u^*_i}_i)\\
				&\qquad -\Gamma_3({x}_0^{u_0,u^*}-\bar{x}_0^{u_0,u^*_i})-\bar{\Gamma}_3(\mathbb{E}x_0^{u_0,u^*}-\mathbb{E}\bar{x}_0^{u_0}) \Big\rangle
				+2\Big\langle Q\big[x^{u_0,u^*}_i-\bar{x}^{u_0,u^*_i}_i-\Gamma_2(x^{u_0,u^*(N)}\\
				&\qquad -\mathbb{E}\bar{x}_i^{u_0,u^*_i})-\bar{\Gamma}_2(\mathbb{E}x^{u_0,u^*}_i-\mathbb{E}\bar{x}^{u_0,u^*_i}_i)-\Gamma_3(x_0^{u_0,u^*}-\bar{x}_0^{u_0})-\bar{\Gamma}_3(\mathbb{E}x_0^{u_0,u^*}-\mathbb{E}\bar{x}_0^{u_0})\Big],\bar{x}^{u_0,u^*_i}_i\\
				&\qquad-\Gamma_2 \mathbb{E}\bar{x}_i^{u_0,u^*_i}
				-\bar{\Gamma}_2 \mathbb{E}\bar{x}^{u_0,u^*_i}_i-\Gamma_3 \bar{x}_0^{u_0}-\bar{\Gamma}_3\mathbb{E}\bar{x}_0^{u_0}-\eta_2\Big\rangle \Big\}dt\\
&\leq K\mathbb{E}\bigg\{\int_0^T\Big|x^{u_0,u^*}_i-\bar{x}^{u_0,u^*_i}_i-\Gamma_2(x^{u_0,u^*(N)}-\mathbb{E}\bar{x}_i^{u_0,u^*_i})-\bar{\Gamma}_2(\mathbb{E}x^{u_0,u^*}_i-\mathbb{E}\bar{x}^{u_0,u^*_i}_i)-\Gamma_3({x}_0^{u_0,u^*}
				\\
				&\qquad -\bar{x}_0^{u_0})-\bar{\Gamma}_3(\mathbb{E}x_0^{u_0,u^*}-\mathbb{E}\bar{x}_0^{u_0})\Big|^2dt+\int_0^T\Big[\mathbb{E}\Big|(x^{u_0,u^*}_i-\bar{x}^{u_0,u^*_i}_i)-\Gamma_2(x^{u_0,u^*(N)}-\mathbb{E}\bar{x}_i^{u_0,u^*_i})\\
				&\qquad-\bar{\Gamma}_2(\mathbb{E}x^{u_0,u^*_i}_i-\mathbb{E}\bar{x}^{u_0,u^*_i}_i)-\Gamma_3({x}_0^{u_0,u^*}-\bar{x}_0^{u_0}) -\bar{\Gamma}_3(\mathbb{E}x_0^{u_0,u^*}-\mathbb{E}\bar{x}_0^{u_0})\Big|^2\Big]^{\frac{1}{2}}
				\Big[\mathbb{E}\Big|\bar{x}^{u_0,u^*_i}_i\\
&\qquad-\Gamma_2\mathbb{E}\bar{x}_i^{u_0,u^*_i}-\bar{\Gamma}_2\mathbb{E}\bar{x}_i^{u_0,u^*_i}-\Gamma_3\bar{x}_0^{u_0}-\bar{\Gamma}_3\mathbb{E}\bar{x}_0^{u_0}-\eta_2\Big|^2\Big]^{\frac{1}{2}}dt\bigg\}\\
				&\leq K \int_0^T \bigg(\sup _{0 \leq t \leq T}\mathbb{E}\Big|x_i^{u_0,u^*}-\bar{x}_i^{u_0,u^*_i}\Big|^2+\sup _{0 \leq t \leq T}\mathbb{E}\Big|x_0^{u_0,u^*}-\bar{x}^{u_0}_0\Big|^2
				\\
				&\qquad+\sup _{0 \leq t \leq T}\mathbb{E} \Big|x^{u_0,u^*(N)}-\mathbb{E}\bar{x}_i^{u_0,u^*_i}\Big|^2\bigg) d t+K\int_0^T\bigg[\sup _{0 \leq t \leq T} \mathbb{E}\Big|x_i^{u_0,u^*}-\bar{x}_i^{u_0,u^*_i}\Big|^2 \\
				&\qquad+\sup _{0 \leq t \leq T}\mathbb{E}\Big|x_0^{u_0,u^*}-\bar{x}_0^{u_0}\Big|^2
				+\sup _{0 \leq t \leq T}\mathbb{E} \Big|x^{u_0,u^*(N)}-\mathbb{E}\bar{x}_i^{u_0,u^*_i}\Big|^2\bigg]^{\frac{1}{2}} dt=O\left(\frac{1}{\sqrt{N}}\right).
			\end{aligned}
		\end{equation*}
		Similarly, the order of the terminal term of (\ref{follower cost estimation optimal}) is also $O\left(\frac{1}{\sqrt{N}}\right)$. Then, 
		$$
		\begin{aligned}
		&\mathbb{E}\int_0^T\Big\langle S\left[x^{u_0,u^*}_i-\bar{x}^{u_0,u^*_i}_i-\bar{\Gamma}_2(x^{u_0,u^*(N)}-\mathbb{E}\bar{x}_i^{u_0,u^*_i})\right],u^*_i\big\rangle dt\\
		&\leq K\int_0^T \left[\mathbb{E}\left(\left|x^{u_0,u^*}_i-\bar{x}^{u_0,u^*_i}_i\right|^2+\left|x^{u_0,u^*(N)}-\mathbb{E}\bar{x}_i^{u_0,u^*_i}\right|^2\right)\right]^{\frac{1}{2}} dt=O\left(\frac{1}{\sqrt{N}}\right).
	    \end{aligned} 
		$$
		Therefore, (\ref{estimate followers cost}) is proved. And then, when $u_0=u_0^*$, we apply (\ref{leader approximation}) of Lemma \ref{CC lemma} to get
		\begin{equation}\label{leader approximation *}
			\mathbb{E} \left[\sup _{0 \leq t \leq T}\left|x^{u_0,u^*}_0(t)-\bar{x}^{u_0}_0(t)\right|^2\right] = O\left(\frac{1}{N}\right).
		\end{equation}
		Using similar method, we can prove (\ref{estimate leader cost}) by (\ref{CC3}) and (\ref{leader approximation *}). The proof is complete.
	\end{proof}

\subsection{Proof of Lemma \ref{follower perturbed CC lemma}}

\begin{proof}
		Due to (\ref{m_i follower state perturbation}) and (\ref{m_j perturbation}), we have
		\begin{equation}\label{m^N equation}
			\left\{\begin{aligned}
				d m^{(N)}= & \left[\left(A+C\right) m^{(N)}+\frac{1}{N}B\left(v_i+\sum_{j=1,j \neq i}^{N}u_j^*\right)+\bar{A}\mathbb{E}m^{(N)}+F \mathbb{E}m_0+b\right] d t \\
				& +\frac{1}{N}\sum_{i=1}^{N}\sigma d W_i+\frac{1}{N}\sum_{i=1}^{N}\bar{\sigma} d \overline{W}_i, \\
				m^{(N)}(0) & =\xi.
			\end{aligned}\right.
		\end{equation}
		From (\ref{m_i follower state perturbation}) and (\ref{x^*N}), we obtain
		\begin{equation*}
			\left\{\begin{aligned}
				d	\left({m}^{(N)}-x^{u_0,u^*(N)}\right) =&\left[\left(A+C\right) \left(m^{(N)}-x^{u_0,u^*(N)}\right)+\frac{B}{N}\left(v_i-u_i^*\right) \right.\\
				&\left.+\bar{A}\left(\mathbb{E}m^{(N)}-\mathbb{E}x^{u_0,u^*(N)}\right)+F\left(\mathbb{E}m_0-\mathbb{E}x_0^{u_0,u^*}\right)\right]dt,\\
				\left({m}^{(N)}-x^{u_0,u^*(N)}\right)(0) =&\ 0.
			\end{aligned}\right.
		\end{equation*}
		By integrating and taking expectation on both sides of the above equation, we have
		\begin{equation}\label{estimation perturbation 1}
			\begin{aligned}
				& \mathbb{E}\left|{m}^{(N)}(t)-x^{u_0,u^*(N)}(t)\right|^2 \leq K\mathbb{E}\bigg\{\int_0^t\left|{m}^{(N)}(s)-x^{u_0,u^*(N)}(s)\right|^2+\left|\frac{B}{N}\left(v_i(s)-u_i^*(s)\right)\right|^2 \\
				&\qquad\quad +\left|\mathbb{E} m_0(s)-\mathbb{E} x_0^{u_0,u^*}(s)\right|^2 d s\bigg\}= K\mathbb{E}\bigg\{\int_0^t\left|{m}^{(N)}(s)-\mathbb{E}\bar{x}_i^{u_0,u^*_i}(s)\right|^2\\
				&\qquad\quad +\left| m_0(s)- \bar{x}_0^{u_0}(s)\right|^2 d s+O\left(\frac{1}{N}\right)\bigg\}.
			\end{aligned}
		\end{equation}
		Similarly, from (\ref{m_0 leader state perturbation}) and (\ref{leader state}), we have
		\begin{equation*}
			\left\{\begin{aligned}
				d\left(m_0-x_0^{u_0,u^*}\right)  =&\left[A_0\left(m_0-x_0^{u_0,u^*}\right)+\bar{A}_0\mathbb{E}\left(m_0-x_0^{u_0,u^*}\right) +C_0\left(m^{(N)}-x^{u_0,u^*(N)}\right)\right]dt,\\
				\left(m_0-x_0^{u_0,u^*}\right) (0) = &\ 0.
			\end{aligned}\right.
		\end{equation*}
		By integrating and taking expectation on both sides of the above equation, we have
		\begin{equation}\label{estimation perturbation 2}
			\begin{aligned}
				& \mathbb{E} \left[\left| m_0(t)-x_0^{u_0,u^*}(t)\right|^2\right]\\
				&\leq K \mathbb{E} \int_0^t\left(\left| m_0(s)-x_0^{u_0,u^*}(s)\right|^2+\left| \mathbb{E}m_0(s)-\mathbb{E}x_0^{u_0,u^*}(s)\right|^2+\left|m^{(N)}(s)-x^{u_0,u^*(N)}(s)\right|^2\right) d s \\
				&\leq  K \mathbb{E} \int_0^t\left(\left| m_0(s)-x_0^{u_0,u^*}(s)\right|^2+\left|m^{(N)}(s)-x^{u_0,u^*(N)}(s)\right|^2\right) d s.
			\end{aligned}
		\end{equation}
		Adding (\ref{estimation perturbation 1}) and (\ref{estimation perturbation 2}) together, we obtain
		\begin{equation}\label{Gronwall 1}
			\begin{aligned}
				& \mathbb{E} \left[\left| m_0(t)-x_0^{u_0,u^*}(t)\right|^2\right]+\mathbb{E}\left|m^{(N)}(t)-x^{u_0,u^*(N)}(t)\right|^2 \\
				&\leq K\mathbb{E}\int_0^t\left(\left|m_0(s)-x_0^{u_0,u^*}(s)\right|^2+\left|m^{(N)}(s)-x^{u_0,u^*(N)}(s)\right|^2\right)d s +O\left(\frac{1}{N}\right).
			\end{aligned}
		\end{equation}
		From (\ref{Gronwall 1}), by applying Gronwall's inequality, we have
		\begin{equation}\label{follower m-x}
			\sup _{0 \leq t \leq T}\mathbb{E} \left[\left|m^{(N)}(t)- x^{u_0,u^*(N)}(t)\right|^2\right] =O\left(\frac{1}{N}\right),
		\end{equation}
		\begin{equation}\label{leader m0-x0}
			\sup _{0 \leq t \leq T}\mathbb{E} \left[\left|m_0(t)-x_0^{u_0,u^*}(t)\right|^2\right] =O\left(\frac{1}{N}\right).
		\end{equation}
		Then, we can get 
		(\ref{follower perturbed CC 1}) from (\ref{CC3}) and (\ref{follower m-x}). Similarly, we can get (\ref{follower perturbed CC 2}) from (\ref{leader approximation}) and (\ref{leader m0-x0}). And using similar method, we can obtain (\ref{follower perturbed CC 3}) by (\ref{follower perturbed CC 1}) and (\ref{follower perturbed CC 2}).
		Then, the proof is complete.
	\end{proof}

\subsection{Proof of Lemma \ref{right half of follower NE}}

\begin{proof}
		Applying Lemma \ref{left half of follower NE} and Lemma \ref{follower perturbed CC lemma}, we obtain
		\begin{equation*}
			\begin{aligned}
				&\left|\mathcal{J}_i(v_i(\cdot), u_{-i}^*(\cdot))-J_i(v_i(\cdot))\right|
				=\frac{1}{2} \mathbb{E}\bigg\{\int_0^T\bigg\{ \Big\langle Q\Big[m_i-\bar{m}_i-\Gamma_2({m}^{(N)}-\mathbb{E}\bar{x}_i^{u_0,u^*_i})-\bar{\Gamma}_2(\mathbb{E}m_i-\mathbb{E}\bar{m}_i)\\
				& -\Gamma_3({m}_0-\bar{x}_0^{u_0,u^*})-\bar{\Gamma}_3(\mathbb{E}m_0-\mathbb{E}\bar{x}_0^{u_0})\Big],m_i-\bar{m}_i-\Gamma_2({m}^{(N)}-\mathbb{E}\bar{x}_i^{u_0,u^*_i})-\bar{\Gamma}_2(\mathbb{E}m_i-\mathbb{E}\bar{m}_i)\\
				& -\Gamma_3({m}_0-\bar{x}_0)-\bar{\Gamma}_3(\mathbb{E}m_0-\mathbb{E}\bar{x}_0^{u_0}) \Big\rangle
				+2\Big\langle Q\Big[m_i-\bar{m}_i-\Gamma_2({m}^{(N)}-\mathbb{E}\bar{x}_i^{u_0,u^*_i})-\bar{\Gamma}_2(\mathbb{E}m_i-\mathbb{E}\bar{m}_i)\\
				& -\Gamma_3({m}_0-\bar{x}_0^{u_0})-\bar{\Gamma}_3(\mathbb{E}m_0-\mathbb{E}\bar{x}_0^{u_0})\Big],\bar{m}_i-\Gamma_2 \mathbb{E}\bar{x}_i^{u_0,u^*_i}
				-\bar{\Gamma}_2 \mathbb{E}\bar{m}_i-\Gamma_3 \bar{x}_0^{u_0}-\bar{\Gamma}_3\mathbb{E}\bar{x}_0^{u_0}-\eta_2\Big\rangle
				\\
				& +2\Big\langle S\left[m_i-\bar{m}^*_i-\bar{\Gamma}_2({m}^{(N)}-\mathbb{E}\bar{x}_i^{u_0,u^*_i})\right],u^*_i\big\rangle \bigg\} dt\\
				&+\Big\langle G\Big[m_i-\bar{m}_i-\Gamma_4({m}^{(N)}-\mathbb{E}\bar{x}_i^{u_0,u^*_i})-\bar{\Gamma}_4(\mathbb{E}m_i-\mathbb{E}\bar{m}_i)-\Gamma_5({m}_0-\bar{x}_0^{u_0})-\bar{\Gamma}_5(\mathbb{E}m_0-\mathbb{E}\bar{x}_0^{u_0})\Big],\\
				&\quad m_i-\bar{m}_i-\Gamma_4({m}^{(N)}-\mathbb{E}\bar{x}_i^{u_0,u^*_i})-\bar{\Gamma}_4(\mathbb{E}m_i-\mathbb{E}\bar{m}_i)-\Gamma_5({m}_0-\bar{x}_0^{u_0})-\bar{\Gamma}_5(\mathbb{E}m_0-\mathbb{E}\bar{x}_0^{u_0}) \Big\rangle(T)\\
				&+2\Big\langle G\Big[m_i-\bar{m}_i-\Gamma_4({m}^{(N)}-\mathbb{E}\bar{x}_i^{u_0,u^*_i})-\bar{\Gamma}_4(\mathbb{E}m_i-\mathbb{E}\bar{m}_i)-\Gamma_5({m}_0-\bar{x}_0^{u_0})-\bar{\Gamma}_5(\mathbb{E}m_0-\mathbb{E}\bar{x}_0^{u_0})\Big],\\
				&\quad \bar{m}_i-\Gamma_4 \mathbb{E}\bar{x}_i^{u_0,u^*_i}-\bar{\Gamma}_4  \mathbb{E}\bar{m}_i-\Gamma_5 \bar{x}_0^{u_0}-\bar{\Gamma}_5\mathbb{E}\bar{x}_0^{u_0}-\eta_4\Big\rangle (T)\bigg\}\,.\\
			\end{aligned}
		\end{equation*}
		Applying the method similarly as Lemma \ref{left half of follower NE} and noting (\ref{follower perturbed CC 1}), (\ref{follower perturbed CC 2}), (\ref{follower perturbed CC 3}), we can get (\ref{right half of follower NE}). Then, we complete the proof.
	\end{proof}
	
\end{document}